\documentclass[reqno,12pt]{amsart}
\usepackage{amsmath,amssymb,amsthm,graphicx,mathrsfs,bbm,url}
\usepackage{amsthm}
\usepackage{wrapfig}
\usepackage{enumitem}
\usepackage{mathtools}
\usepackage[utf8]{inputenc}
%% Permet la coupure des mots accentués.
 \usepackage[T1]{fontenc}
\usepackage[usenames,dvipsnames]{color}
\usepackage[colorlinks=true,linkcolor=Red,citecolor=Green]{hyperref}
\usepackage[super]{nth}
\usepackage[open, openlevel=2, depth=3, atend]{bookmark}
\hypersetup{pdfstartview=XYZ}
\usepackage[font=footnotesize]{caption}
\usepackage{a4wide}
\usepackage{epstopdf}

\theoremstyle{plain}
\newtheorem{theorem}{Theorem}[section]
\newtheorem{theoremexp}{Theorem}

\newtheorem*{theorem*}{Theorem}

\newtheorem{lemma}[theorem]{Lemma}
\newtheorem{proposition}[theorem]{Proposition}

\newtheorem{conjecture}[theorem]{Conjecture}

\theoremstyle{definition}

\theoremstyle{remark}
\newtheorem{remark}[theorem]{Remark}

%\addto\captionsfrench{\renewcommand\proofname{Proof}}

\numberwithin{equation}{section}

\newcommand{\C}{\mathbb{C}}
\newcommand{\R}{\mathbb{R}}
\newcommand{\RR}{\mathbf{R}}

\newcommand{\M}{\mathcal{M}}
\newcommand{\E}{\mathcal{E}}
\newcommand{\N}{\mathbb{N}}

\newcommand{\V}{\mathbb{V}}
\newcommand{\X}{\mathbf{X}}

\newcommand{\HH}{\mathbb{H}}

\newcommand{\eps}{\varepsilon}

\newcommand{\mc}{\mathcal}

\newcommand{\dd}{\mathrm{d}}

\DeclareMathOperator{\Ell}{ell}
\DeclareMathOperator{\Tr}{Tr}
\DeclareMathOperator{\Op}{Op}
\DeclareMathOperator{\WF}{WF}

\newcommand{\be}{\begin{equation}}
\newcommand{\ee}{\end{equation}}

\title[Radial source estimates in H\"older-Zygmund spaces]{Radial source estimates in H\"older-Zygmund spaces for hyperbolic dynamics}

\author{Yannick Guedes Bonthonneau}

\address{LAGA - Institut Galilée, 99 avenue Jean Baptiste clément, 93430 Villetaneuse, France.}

\email{bonthonneau@math.univ-paris13.fr}

\author{Thibault Lefeuvre}

\address{Université de Paris and Sorbonne Université, CNRS, IMJ-PRG, F-75006 Paris, France.}

\email{tlefeuvre@imj-prg.fr}

\begin{document}

\begin{abstract}
We prove a radial source estimate in Hölder-Zygmund spaces for uniformly hyperbolic dynamics (also known as Anosov flows), in the spirit of Dyatlov-Zworski \cite{Dyatlov-Zworski-16}. The main consequence is a new linear stability estimate for the marked length spectrum rigidity conjecture, also known as the Burns-Katok \cite{Burns-Katok-85} conjecture. We show in particular that in any dimension $\geq 2$, in the space of negatively-curved metrics, $C^{3+\eps}$-close metrics with same marked length spectrum are isometric. This improves recent works of Guillarmou-Knieper and the second author \cite{Guillarmou-Lefeuvre-18,Guillarmou-Knieper-Lefeuvre-19}. As a byproduct, this approach also allows to retrieve various regularity statements known in hyperbolic dynamics and usually based on Journé's lemma: the smooth Liv\v{s}ic Theorem of de La Llave-Marco-Moriyón \cite{DeLaLlave-Marco-Moryon-86}, the smooth Liv\v{s}ic cocycle theorem of Niticā-Török \cite{Nitica-Torok-98} for general (finite-dimensional) Lie groups, the rigidity of the regularity of the foliation obtained by Hasselblatt \cite{Hasselblatt-92} and others. 
\end{abstract}

\maketitle

\section{Introduction}

\subsection{Motivation: the marked length spectrum conjecture}

Let $(M,g)$ be a smooth closed Riemannian manifold. We say that it is \emph{Anosov} if its geodesic flow $(\varphi_t)_{t \in \R}$ defined on its unit tangent bundle $\M := SM$ is Anosov, that is there exists a continuous splitting of the tangent space by
\[
T\M = \R X \oplus E_s \oplus E_u,
\]
and constants $C,\lambda>0$ such that
\begin{equation}
\label{equation:anosov}
\begin{array}{l}
\forall t \geq 0, \forall v \in E_s, \  \|\dd\varphi_t(v)\| \leq C\times e^{-t\lambda}\|v\|, \\
\forall t \leq 0, \forall v \in E_u, \  \|\dd\varphi_t(v)\| \leq C\times e^{-|t|\lambda}\|v\|,
\end{array}
\end{equation}
where $\| \cdot \| = g_\star(\cdot,\cdot)^{1/2}$ is an arbitrary smooth auxiliary metric on $\M$.

Let $\mc{C}$ be the set of free homotopy classes on $M$. It is well-known that, if $(M,g)$ is Anosov, there exists a unique closed geodesic $\gamma_g(c) \in c$ in each free homotopy class $c \in \mc{C}$. The \emph{marked length spectrum} of the Anosov manifold $(M,g)$ is the map
\begin{equation}
\label{equation:mls}
L_g : \mc{C} \rightarrow \R_+, ~~~ L_g(c) := \ell_g(\gamma_g(c)),
\end{equation}
where $\ell_g(\gamma)$ denotes the Riemannian length of the curve $\gamma$ computed with respect to the metric $g$.

Let $\mathrm{Met}_{\mathrm{An}}$ be the space of (smooth) Anosov metrics on $M$ and let $\mathrm{Diff}^0(M)$ be the group of smooth diffeomorphisms that are isotopic to the identity. It is clear that the map
\[
\mathrm{Met}_{\mathrm{An}} \ni g \mapsto L_{g}
\]
is invariant under the action (by pullback) of $\mathrm{Diff}^0(M)$, namely $L_g = L_{\phi^*g}$ whenever $\phi \in \mathrm{Diff}^0(M)$, so that $L_g = L_{[g]}$. An element $[g] \in \mathrm{Met}_{\mathrm{An}}/\mathrm{Diff}^0(M)$ is called \emph{an isometry class}. We are interested in the following conjecture, known as the \emph{Burns-Katok conjecture} \cite{Burns-Katok-85} or the \emph{marked length spectrum rigidity conjecture}\footnote{Originally, it was only formulated for negatively-curved manifolds.}:

\begin{conjecture}[Burns-Katok]
The map
\[
\mathrm{Met}_{\mathrm{An}}/\mathrm{Diff}^0(M) \ni [g] \mapsto L_{[g]}
\]
is injective.
\end{conjecture}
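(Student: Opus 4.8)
The plan is to prove the conjecture by showing that $L\colon[g]\mapsto L_{[g]}$ is a smooth \emph{immersion} with \emph{connected fibres}: an immersion has discrete fibres, so a connected fibre is a single isometry class, and this is precisely global injectivity. The differential of $L$ is the X-ray transform over closed geodesics, and its injectivity modulo gauge — \emph{infinitesimal rigidity} — is exactly the kind of statement the Hölder--Zygmund radial source estimates of this paper are built to establish in low regularity; connectedness of the fibres is the global, nonlinear input, for which the natural route is Knieper's geodesic stretch and its convexity, and this is where I expect the real difficulty to lie.

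\emph{Step 1 (infinitesimal rigidity).} Fix an Anosov metric $g$ and a symmetric $2$-tensor $h$. Since closed geodesics are critical points of the length functional, the first variation of $L_g(c)$ in the direction $h$ is $\tfrac12\int_{\gamma_g(c)}h(\dot\gamma,\dot\gamma)\,\dd t = \tfrac12\int_{\gamma_g(c)}\pi_2^*h$, where $\pi_2^*h(x,v)=h_x(v,v)$ is the pullback of $h$ to $SM$. Hence $D_gL\cdot h=0$ forces $\pi_2^*h$ to integrate to zero over every periodic orbit of $(\varphi_t)$; by the Liv\v{s}ic theorem, $\pi_2^*h=Xu$ for some $u$, i.e.\ $h$ lies in the kernel of the X-ray transform $I_2^g$ on symmetric $2$-tensors. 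Solenoidal injectivity of $I_2^g$ — obtained by combining the positivity of the operator $\Pi_2=\pi_{2*}R_0\pi_2^*$, where $R_0$ is the generalized resolvent of $X$ at zero, with its microlocal mapping properties — then gives $h = D^s p$ for a $1$-form $p$ (here $D^s$ is the symmetrized covariant derivative), equivalently $h=\tfrac12\mathcal L_{p^\sharp}g$ is tangent to the $\mathrm{Diff}^0(M)$-orbit of $g$. The radial source estimates of the present paper are precisely what make this argument run for metrics of finite regularity, giving the $C^{3+\eps}$ threshold. Consequently, along any path $(g_t)$ contained in a single fibre of $L$, differentiation yields $\dot g_t=\tfrac12\mathcal L_{V_t}g_t$; solving the gauge normalization so that $t\mapsto V_t$ is smooth and integrating its flow produces $\phi\in\mathrm{Diff}^0(M)$ with $\phi^*g_1=g_0$. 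Thus a \emph{connected} fibre is a point, and it remains exactly to show the fibres of $L$ are connected.

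\emph{Step 2 (connecting isospectral metrics).} To connect two Anosov metrics $g_0,g_1$ with $L_{[g_0]}=L_{[g_1]}$ inside their common fibre I would use Knieper's geodesic stretch, the $\mu$-a.e.\ limit $\lim_{t\to\infty}\tfrac1t\,d_g\big(\widetilde\pi x,\widetilde\pi\widetilde\varphi^{g_0}_t x\big)$ in the universal cover, where $\mu$ is the Bowen--Margulis measure of $(\varphi^{g_0}_t)$ and $d_g$ the $g$-distance. As a functional of $g$ it is determined by the marked length spectra, it obeys a Manning-type inequality against topological entropies, and, after normalizing (by volume, or by $h_{\mathrm{top}}$), the fibre of $L$ through $g_0$ is exactly its minimum set. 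The strategy is then to prove that the normalized stretch is convex along suitable paths — geodesics of the $L^2$ metric on $\mathrm{Met}_{\mathrm{An}}$ — joining any two Anosov metrics, and coercive on sublevel sets, so that its minimum set is path-connected (by minimizing geodesics); coercivity is where the strong a priori estimates of this paper enter, through compactness. Granting this, every fibre of $L$ is connected, and Step 1 concludes that $L$ is injective, which is the Burns--Katok conjecture.

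\emph{Main obstacle.} The hard part is the \emph{global} convexity of the normalized geodesic stretch. At a point its second variation reproduces — up to gauge — the positive quadratic form $h\mapsto\langle\Pi_2h,h\rangle$, which is the analytic heart of the linear stability estimate behind the perturbative $C^{3+\eps}$ result; but $\mathrm{Met}_{\mathrm{An}}$ is open and not convex, the stretch degenerates as one approaches its boundary (where the Anosov property fails, or closed geodesics pinch), and no global convexity statement is known in the Anosov category — in negative curvature one has only partial convexity results, too weak to force an entire fibre to be connected. Until this is settled, the scheme above proves Burns--Katok in the perturbative regime and reduces the general conjecture to the connectedness of the level sets of $L$.
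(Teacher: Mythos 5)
There is a genuine gap, and it is one you have already named yourself: the statement you were asked to prove is the Burns--Katok conjecture, which is open, and the paper does not prove it either --- it only establishes the \emph{local} version (Theorem \ref{theorem:mls}): metrics that are $C^{3+\eps}$-close to a fixed $g_0$ and have the same marked length spectrum are isometric, with a linear stability estimate. Your Step 2, the connectedness of the fibres of $L$ via global convexity and coercivity of a normalized geodesic stretch along $L^2$-geodesics of metrics, is not a known result in any form strong enough to run the argument; no such convexity or compactness statement exists in the Anosov (or even the negatively curved) category, and nothing in the radial source estimates of this paper supplies it, since those estimates are linear and perturbative by nature. Your closing sentence concedes exactly this, so what you have is a reduction of the conjecture to an unproven global statement, not a proof.

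Two further points weaken even the parts you treat as settled. First, Step 1 rests on solenoidal injectivity of $I_2^g$ at \emph{every} Anosov metric $g$ along the path, but this is itself only known on surfaces, on manifolds of dimension $\geq 3$ with non-positive curvature, and for generic metrics (the restrictions stated in Theorem \ref{theorem:mls}); in general it is conjectural, so the ``immersion'' half of your scheme is also not available unconditionally. Second, even granting injectivity of the differential modulo gauge, passing from ``$\dot g_t=\tfrac12\mathcal{L}_{V_t}g_t$ along a path in a fibre'' to an isometry requires (i) that the fibre actually contains a differentiable path joining $g_0$ to $g_1$, which is a stronger statement than topological connectedness, and (ii) a quantitative (tame) inverse of the linearized problem to solve for $V_t$ with enough regularity to integrate its flow --- this is precisely the stability estimate the paper proves, and it is only valid in a $C^{3+\eps}$-neighbourhood of the reference metric. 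So the argument, as written, reproduces the perturbative result of the paper and leaves the global conjecture untouched.
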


The conjecture was independently solved by Croke \cite{Croke-90} and Otal \cite{Otal-90} on negatively-curved surfaces. There are also a few other partial results \cite{Katok-88,Besson-Courtois-Gallot-95,Hamenstadt-99}. Recently, Guillarmou-Knieper and the second author \cite{Guillarmou-Lefeuvre-18,Guillarmou-Knieper-Lefeuvre-19} proved a \emph{local} version of the conjecture, namely assuming that $g$ and $g'$ are \emph{a priori} close enough in the $C^N$-topology (for some $N \gg 1$) with same marked length spectrum, they showed that the metrics are isometric. This result came with a \emph{stability estimate} but had two drawbacks: first of all, the integer $N$ was depending \emph{linearly} on the dimension $n$ of the manifold $M$; secondly, the stability estimate was non-linear. We address these two problems in our main Theorem \ref{theorem:mls}.

Before stating it, we need to introduce the notion of geodesic stretch. Given a fixed Anosov metric $g_0$ on $M$ (whose unit tangent bundle is denoted by $SM$) and another metric $g$, there is a function $a_g \in C^\nu(SM)$ (where $\nu > 0$ is a small exponent) with the property that:
\[
L_g(c) := \int_{\gamma_{g_0}(c)} a_g ~ \dd \gamma_{g_0}(c).
\]
We call it \emph{geodesic stretch}. In particular, $a_g-\mathbf{1}$ integrates to $0$ along all closed orbits of the geodesic flow of $g_0$ iff the two metrics have same marked length spectrum. The function $a_g$ is only well-defined up to a \emph{coboundary} i.e. a term of the form $X u$, where $X$ is the $g_0$-geodesic vector field and $u$ is a function on $SM$.

\begin{theorem}
\label{theorem:mls}
Let $(M,g_0)$ be a smooth Anosov manifold and further assume it is non-positively curved if $\mathrm{dim}(M) \geq 3$ or $g_0$ is generic. For any $\eps>0$, there exists $\nu, C > 0$ such that the following holds. For any metric $g$ such that $\|g-g_0\|_{C^{3+\eps}} < 1/C$, there exists a $C^{4+\eps}$-diffeomorphism $\phi$, isotopic to the identity, such that
\[
\| \phi^* g - g_0 \|_{C^{\nu-1}} \leq C \inf_{\substack{u \in C^\nu(SM), \\ Xu \in C^\nu(SM)}} \|a_g-\mathbf{1} + Xu\|_{C^{\nu}}.
\]
\end{theorem}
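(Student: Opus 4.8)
The plan is to reduce the marked length spectrum problem to a transport equation on $SM_{g_0}$ and then feed that equation into a radial source estimate in Hölder–Zygmund spaces. First I would recall the standard linearization: writing $g_t = g_0 + t h$ and differentiating the geodesic stretch, one obtains that the variation of the marked length spectrum is controlled by the $X$-ray transform $I_2 h$ of the symmetric $2$-tensor $h = g - g_0$, and the gauge freedom (pullback by diffeomorphisms) corresponds exactly to the kernel of $I_2$, which by the surjectivity of the adjoint consists of potential tensors $D p$. The first step is therefore to choose a gauge: using the solenoidal decomposition $h = h^{\mathrm{sol}} + D p$ with $\delta^{g_0} h^{\mathrm{sol}} = 0$, one produces the diffeomorphism $\phi$ by integrating the vector field dual to $p$; the regularity claim $\phi \in C^{4+\eps}$ will follow once we know $p \in C^{5+\eps}$, which is itself an elliptic gain of two derivatives over $h^{\mathrm{sol}}$ — so really the whole point is to bound $h^{\mathrm{sol}}$ in $C^{\nu-1}$.

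Next I would set up the transport equation. After passing to the solenoidal gauge, the function $a_g - \mathbf 1$ differs from a pairing $\langle \pi_2^* h^{\mathrm{sol}}, \cdot\rangle$ by a controlled error quadratic in $\|h\|_{C^{3+\eps}}$ plus a coboundary $X u$; absorbing the coboundary into the infimum on the right-hand side, we are left with an equation of the form $X w = f$ on $SM_{g_0}$, where $f$ is (essentially) the pushforward of $h^{\mathrm{sol}}$ and $\|f\|_{C^\nu}$ is bounded by the right-hand side of the theorem. Inverting $X$ is where the hyperbolic dynamics enters: $X$ is not invertible, but on the anisotropic/Hölder–Zygmund scale adapted to the Anosov splitting its resolvent $(X - z)^{-1}$ extends meromorphically, and the radial source estimate (the main technical result of the paper, which I may invoke) gives $\|w\|_{C^{\nu}} \lesssim \|f\|_{C^\nu} + \|w\|_{C^{\nu-1}}$ with the error term controlled because $f$ has vanishing averages on closed orbits, so $w$ is genuinely a function and not just a distribution. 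Then one recovers $h^{\mathrm{sol}}$ from $w$ by the usual argument: the components of $h^{\mathrm{sol}}$ along $E_s$, $E_u$ and $X$ are read off from $w$ and its derivatives in the stable/unstable directions, and the ellipticity of $\pi_2^*$ together with the a priori regularity of the foliations (again from the source estimate machinery) upgrades this to $h^{\mathrm{sol}} \in C^{\nu - 1}$. Tracking constants linearly through each of these steps — linearization error, gauge, transport inversion, reconstruction — yields the stated linear stability estimate.

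The main obstacle, and the reason the threshold is $C^{3+\eps}$ rather than $C^N$, is the interplay between the loss of derivatives in inverting the transport operator and the regularity of the stable/unstable foliations. In the classical approach (Guillarmou–Lefeuvre) one uses that $E_s, E_u$ are $C^{1+\alpha}$ and controls things in $C^N$ for $N$ large, losing a number of derivatives growing with the dimension because of Sobolev embeddings in the microlocal estimates; here the Hölder–Zygmund source estimate is \emph{dimension-independent}, so the only losses are the two derivatives from the X-ray/ellipticity step and a fixed small amount $1 + \nu$ from the transport inversion and the foliation regularity. Making this precise requires the radial source estimate to hold on the Hölder–Zygmund spaces $C^\nu_*$ with \emph{uniform} constants as $\nu \to 0^+$, and requires care that the quadratic remainder in the linearization — which a priori involves $\|h\|_{C^{3}}$ second derivatives of $g$ composed with the flow — is actually bounded in $C^\nu$ and small; establishing that remainder estimate in the low-regularity class $C^{3+\eps}$ is the delicate quantitative heart of the argument. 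Everything else (choice of gauge, integration of the vector field, bookkeeping of constants) is routine once the source estimate is in hand.
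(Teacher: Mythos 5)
Your skeleton (fix the gauge by solenoidal reduction, linearize the geodesic stretch, control the tensor by its X-ray data via the source estimate) is the right one, but two pivotal quantitative steps are wrong or missing, and the proof does not close as written. First, the remainder estimate you announce — an error ``quadratic in $\|h\|_{C^{3+\eps}}$'' measured in $C^\nu$ modulo coboundaries — is structurally insufficient. The left-hand side of the theorem is the \emph{weak} norm $\|\phi^*g-g_0\|_{C^{\nu-1}}$, so a remainder of size $O(\|h\|_{C^{3+\eps}}^2)$ cannot be absorbed and would leave a non-removable additive term, destroying the linear stability estimate. What is actually needed (and what the paper proves in Proposition \ref{prop:sharp-estimate-stretch}) is the bilinear bound
\[
\|a_g-\mathbf{1}-\tfrac12\pi_2^*(g-g_0)\|_{C^\nu/D^\nu}\lesssim \|g-g_0\|_{C^{\nu-1}}\,\|g-g_0\|_{C^{3+\eps}},
\]
with one factor in the weak norm $C^{\nu-1}$, so that it can be absorbed into the left-hand side once $\|g-g_0\|_{C^{3+\eps}}$ is small. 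Obtaining this requires a genuine second-order Taylor expansion of the stretch \emph{modulo coboundaries}, exploiting the cocycle structure of the orbit-conjugacies (Lemmas \ref{lemma:derivative-stretch-mod-Dnu} and \ref{lemma:norme}); it is not a routine linearization error.

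Second, your passage from ``$\pi_2^*h^{\mathrm{sol}}$ small modulo coboundaries'' to ``$h^{\mathrm{sol}}$ small in $C^{\nu-1}$'' — inverting $Xw=f$ and then ``reading off'' the components of $h^{\mathrm{sol}}$ from $w$ and its stable/unstable derivatives — is not a valid reconstruction: there is no such componentwise recovery of a symmetric $2$-tensor on $M$ from a function on $SM$. The correct mechanism is the generalized X-ray transform $\Pi_2={\pi_2}_*(\Pi_0+\Pi)\pi_2^*$, an elliptic pseudodifferential operator of order $-1$ whose invertibility on solenoidal tensors gives $\|h^{\mathrm{sol}}\|_{C^{\nu-1}}\lesssim\|\Pi_2 h^{\mathrm{sol}}\|_{C^\nu}$ (Lemma \ref{lemma:bound}); this invertibility is exactly where the s-injectivity of $I_2^{g_0}$ — hence the non-positive curvature or genericity hypotheses, which you never invoke — enters. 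The radial source estimate is then used not to invert a transport equation for $w$, but to prove that ${\pi_2}_*\Pi$ is bounded on $C^\nu$ (Lemma \ref{lemma:bound2}), which is what lets one estimate $\|\Pi_2 h^{\mathrm{sol}}\|_{C^\nu}$ by the quotient norm $\|\pi_2^*h^{\mathrm{sol}}\|_{C^\nu/D^\nu}$. Without these two ingredients the argument yields neither the linear bound nor any bound at all on $h^{\mathrm{sol}}$.
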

 
Theorem \ref{theorem:mls} shows that the metrics need only be $C^{3+\eps}$-close to obtain local rigidity of the marked length spectrum. The norm on the right-hand side in the quotient norm on the quotient space $C^\nu/D^{\nu}$ of functions up to coboundaries, see \S\ref{section:mls} for further details. As we shall see below, Theorem \ref{theorem:mls} is also based on the invertibility of a certain natural operator, the \emph{X-ray transform of symmetric $2$-tensors} for the metric $g_0$, denoted by $I_2^{g_0}$, and its injectivity is known on Anosov surfaces \cite{Paternain-Salo-Uhlmann-14-1, Guillarmou-17-1}, Anosov manifolds of dimension $n \geq 3$ with non-positive curvature \cite{Croke-Sharafutdinov-98}, and generic Anosov manifolds \cite{Cekic-Lefeuvre-21}, hence the restrictions in the theorem.

\subsection{Radial source estimates in Hölder-Zygmund spaces}

The technical result behind Theorem \ref{theorem:mls} is quite flexible and interesting in itself. We present it now, before turning to some remarks on regularity of cohomological equations. Let $(\varphi_t)_{t\in\R}$ be a smooth arbitrary Anosov flow in the sense of \eqref{equation:anosov} on a closed Riemannian manifold $(\M,g)$, with generating vector field $X$. Since our tools are that of microlocal analysis, we will work in $T^\ast\M$ rather than $T\M$, and we have to introduce the dual decomposition
\[
T^*\M = E_0^* \oplus E_s^* \oplus E_u^*,
\]
where $E_0^*(E_s\oplus E_u)=0, E_s^*(E_s\oplus \R X) = 0, E_u^*(E_u \oplus \R X)=0$. The flow $\varphi_t$ has a symplectic lift $\Phi_t$ to $T^\ast \M$ given by
\[
\Phi_t(x,\xi) = ( \varphi_t(x), d_x\varphi_t^{-\top} \xi)
\]
(here ${}^{-\top}$ stands for the inverse transpose). The same estimates as \eqref{equation:anosov} hold for $E_s$ replaced by $E_s^*$, $E_u$ replaced by $E_u^*$, $v \in T\M$ replaced by a covector $(x,\xi) \in T^*\M$ and $\dd \varphi_t(v) $ replaced by $\Phi_t(x,\xi)$.

We consider $\E\to\M$, a Hermitian vector bundle, and we assume given a derivation $\mathbf{X}$ acting on $C^\infty(\M,\E)$, lifting $X$ so that for $f\in C^\infty(\M)$ and $v\in C^\infty(\M,\E)$,
\begin{equation}
\label{equation:lie}
\X(f \cdot v) = Xf \cdot v + f \cdot \X v.
\end{equation}
The propagator $e^{-t \X}$ of the operator $\X$ is a pointwise map acting as:
\[
e^{-t\X} : \E_{\varphi_{-t}(x)} \rightarrow \E_x,
\]
for any $x \in \M$ and we denote by $M(t,x)$ its norm, namely:
\[
M(t,x) := \sup_{v \in \E_{\varphi_{-t}(x)}, \|v\|=1} \|e^{-t\X} v\|_x,
\]
where $\|\cdot\|$ stands for the norm in the fibers of $\E$. We introduce the following quantity
\begin{equation}
\label{equation:threshold}
\omega_+(\X) := \inf\left\{ \rho > 0 ~\middle|~ \sup_{x \in \M} \lim_{T\to + \infty} \frac{1}{T}\log \Big[M(T,x) \times \| \dd_x \varphi_{-T}|_{E^u} \|^\rho\Big] < 0 \right\},
\end{equation}
to which we will refer as the \emph{forward threshold} in the following. Observe that this quantity does not depend on a choice of metric on $\E$ nor on $g$. Reversing time and replacing $E^u$ by $E^s$, we obtain $\omega_-(\X)$ the backwards threshold. The key tool to proving Theorem \ref{theorem:mls} is the so-called \emph{propagation of singularities} toolbox and more precisely source estimates:

\begin{theorem}[Radial source estimate in Hölder-Zygmund regularity]
\label{theorem:source}
Let $A$ be a classical pseudodifferential operator of order $0$, microsupported in a small conical neighbourhood of $E^\ast_s$. There exists $B$, a classical pseudodifferential operator of order $0$, elliptic on the wavefront set of $A$, such that following holds. For $\rho>\omega_+(\X)$ and $N>0$, there exists $C>0$ such that for all $u\in C^\infty(\M,\E)$:
\begin{equation}\label{equation:source}
\| A u\|_{C^\rho_\ast} \leq C\left(\| B \X u \|_{C^\rho_\ast} + \| u \|_{C_*^{-N}}\right).
\end{equation}
More generally, if $u\in \mathcal{D}'(\M,\mathcal{E})$, and there exists $\rho_0 > \omega_+(\X)$ such that $Au \in C^{\rho_0}_\ast$ and $B \X u \in C^\rho_\ast$, then $A u \in C^{\rho}_\ast$ and \eqref{equation:source} holds.
\end{theorem}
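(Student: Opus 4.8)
The plan is to adapt the classical radial-point estimate of Dyatlov–Zworski to the Hölder–Zygmund scale $C^\rho_\ast$. I would work with the semiclassical/microlocal calculus near the source $E^*_s$, where $\Phi_t$ is an attracting radial source inside $T^*\M\setminus 0$ (forward in time: the flow drags covectors towards $E^*_s$ and inflates their length at rate controlled by $\|d_x\varphi_{-t}|_{E^u}\|$). The first step is to set up a suitable escape/weight function and a commutant: choose $B$ a classical $\Psi$DO of order $0$ elliptic on $\WF(A)$ and microsupported in a slightly larger conic neighbourhood of $E^*_s$ that is forward-invariant under $\Phi_t$ for small $t>0$, and arrange that $A$, $B$, $\X$ all fit together so that $A = A B$ microlocally and the "error" regions produced by commutators are pushed off $E^*_s$ (hence controlled by the $C^{-N}_\ast$ term using a standard iteration/propagation argument away from the source). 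This is the routine packaging; the content is the positive commutator estimate at the source itself.

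The key step is to realize the Hölder–Zygmund norm dynamically. Rather than trying to run a positive-commutator argument directly in $C^\rho_\ast$ (which is not an $L^2$-based space and does not play well with the sharp Gårding inequality), I would use the Littlewood–Paley characterization $\|w\|_{C^\rho_\ast} \sim \sup_{k} 2^{k\rho}\|\Delta_k w\|_{L^\infty}$ together with the flow itself as the regularizing mechanism. Concretely, near the source one has, for $u$ with $\X u = f$, the Duhamel-type representation
\[
A u = \lim_{T\to+\infty} e^{-T\X}\big(\text{(conjugated $A$)}\, u\big)\big|_{\varphi_T(\cdot)} + \int_0^{T} e^{-t\X}\big(\tilde A_t f\big)\, \dd t,
\]
where $\tilde A_t$ is $A$ transported by the flow; the boundary term decays because on $\WF(A)$ the flow contracts to the source and $M(T,x)\|d_x\varphi_{-T}|_{E^u}\|^\rho \to 0$ exponentially by the definition \eqref{equation:threshold} of $\omega_+(\X)$ and $\rho>\omega_+(\X)$, while the length inflation of covectors corresponds precisely to a gain of $\rho$ derivatives measured in the Littlewood–Paley scale. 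Making this rigorous means tracking how $\Phi_t$ moves dyadic frequency shells: a covector of size $2^k$ at time $0$ came from a covector of size $\sim 2^k\|d\varphi_{-t}|_{E^u}\|^{-1}$ at time $-t$, so estimating $\|\Delta_k A u\|_{L^\infty}$ reduces to an integral of $\|e^{-t\X}\|\cdot 2^{k\rho}\cdot(\text{frequency-shell factors})\cdot\|\Delta_{k'} \tilde A_t f\|_{L^\infty}$, which converges and is bounded by $C\|B f\|_{C^\rho_\ast}$ uniformly in $k$, provided the geometry of the shells under $\Phi_t$ is uniformly controlled on the cone where $A$ lives.

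I expect the main obstacle to be exactly this frequency-localization bookkeeping: Littlewood–Paley projectors are defined by the flat dyadic decomposition of $T^*\M$, but the Anosov splitting and the inflation rates $\|d\varphi_{-t}|_{E^u}\|$ are anisotropic and only Hölder-continuous, so one must show that conjugating a Littlewood–Paley block $\Delta_k$ by the (non-smooth!) flow still behaves like a frequency block up to acceptable errors, and that the cone near $E^*_s$ can be chosen so that the relevant cocycle $M(t,x)\|d_x\varphi_{-t}|_{E^u}\|^\rho$ is subexponential uniformly — not just pointwise — which is where the subadditivity/Kingman-type argument behind the $\inf$-$\sup$ in $\omega_+(\X)$ is used. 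The low-regularity of the dynamics forces one to work with a smoothed version of the weight and absorb the commutator errors into $\|u\|_{C^{-N}_\ast}$ via a finite propagation argument connecting $\WF(A)$ to a region of a priori control; once the uniform-in-$k$ estimate is in place, the propagation/upgrade statement (from $Au\in C^{\rho_0}_\ast$, $B\X u\in C^\rho_\ast$ to $Au\in C^\rho_\ast$) follows by the usual regularization-and-limiting argument, mollifying $u$, applying \eqref{equation:source} to the mollification, and passing to the limit using that the constant $C$ is mollifier-independent.
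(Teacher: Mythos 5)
Your core estimate follows essentially the same route as the paper: a Duhamel/propagator representation $A u = A e^{-T\X}u + \int_0^T A e^{-t\X}\X u\,\dd t$, the Littlewood--Paley realization of $C^\rho_\ast$, decay of the boundary term through the cocycle $M(T,x)\,\| \dd_x\varphi_{-T}|_{E^u}\|^{\rho}$, and Kingman-type subadditivity to make the threshold uniform in $x$. The ``frequency-shell bookkeeping'' you flag as the main obstacle is resolved in the paper not by conjugating Littlewood--Paley blocks through the (H\"older) splitting, but by a cleaner device: the flow itself is smooth, so only the wavefront-set version of Egorov is needed, and the anisotropic weight $\|\dd\varphi_{-t}|_{E^u}\|$ is replaced by the quantity $\Lambda_{\mathcal{C}_0}(x,t)$ defined via a \emph{smooth} cone $\mathcal{C}_0$ around $E^\ast_s$ (shown comparable to the dynamical weight). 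One then quantizes a $0$-homogeneous symbol $p_T$ supported in $\{|\xi|\gtrsim \Lambda_{\mathcal{C}_0}^{-1}\}$ whose $\xi$-derivatives are bounded \emph{uniformly in $T$}; its action on $u$ only sees Littlewood--Paley blocks with $2^{-j}\lesssim h\Lambda_{\mathcal{C}_0}$, which sums to the factor $(h\Lambda_{\mathcal{C}_0})^\rho$. No smoothing of the weight or absorption of foliation-regularity errors is required. This is worth knowing, but it does not change the architecture of your argument.

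The genuine gap is in the bootstrap. ``Mollify $u$, apply \eqref{equation:source} to the mollification, pass to the limit using that $C$ is mollifier-independent'' does not work as stated, because $\X$ does not commute with the mollifier: writing $u_h=\Op_h(\Psi)u$, one has $\X u_h = \Op_h(\Psi)\X u + [\X,\Op_h(\Psi)]u$, and the commutator is an $h$-semiclassical operator of order $0$ (not lower order), microsupported in an annulus $\{|\xi|\sim h^{-1}\}$ inside the cone where $A$ lives --- i.e.\ exactly at the source, at the mollification frequency, acting on the raw distribution $u$. Nothing in your uniform-in-$k$ estimate controls $\|B[\X,\Op_h(\Psi)]u\|_{C^\rho_\ast}$ uniformly in $h$. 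This is precisely where the hypothesis $Au\in C^{\rho_0}_\ast$ must enter, and the paper's mechanism is to prove the source estimate in a \emph{semiclassical} form (uniform in the localization scale $h_0$, with an $\mathcal{O}(h_0^N)$ remainder), apply it at scale $h_0=h$ to the commutator to get $C^{\rho_0}_\ast$ control, and then trade $\rho_0$ for $\rho$ on the compactly microsupported commutator at the cost of $h^{\rho_0-\rho}$, which is exactly cancelled by the $h^{\rho-\rho_0}$ gained when estimating $\widetilde B_h\X u$ in $C^{\rho_0}_\ast$ versus $C^\rho_\ast$ (your plan never explains how $\rho_0$ is used, which is the symptom of the missing step). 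Finally, the limit $u_h\to u$ gives $\|A_1u\|_{C^\rho_\ast}$ bounded only via the Fatou property of $C^\rho_\ast$, not mere distributional convergence; you should cite or prove that lower semicontinuity.
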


Before presenting the relation with existing results, let us explain its meaning. Since the principal symbol $p$ of $\X$ is $1$-homogeneous, the flow $\Phi_t$ has a smooth extension to the radial compactification $\overline{T^\ast\M}$. In $\overline{T^\ast \M}$, $E^\ast_s \cap \partial T^\ast \M$ is invariant under $\Phi_t$. It is a repeller, in the sense that close trajectories get away exponentially fast. Since the works of Melrose \cite{Melrose-94}, one says that $E^\ast_s$ is a \emph{source}. The idea of propagation of singularities is that regularity propagates along flow lines of $\Phi_t$. If we have some regularity very close to $E^\ast_s$, then we can hope to propagate it to all points whose trajectory originates from close to $E^\ast_s$ is the past. The problem is then to obtain said regularity close to the source, and this is exactly what the source estimate is for.

\begin{center}
\begin{figure}[h!]

\includegraphics{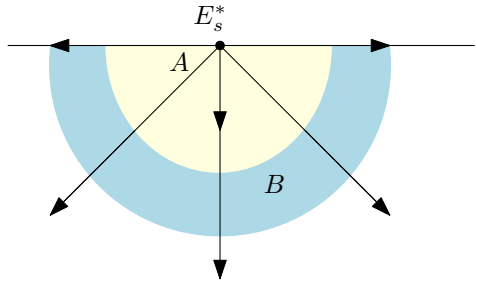}
\caption{A picture of the radial source estimate and the microsupport of $A$ and $B$.}
\label{figure:propagation}

\end{figure}
\end{center}

Melrose \cite{Melrose-94} was the first to introduce this kind of estimate. The purpose was to study Euclidean scattering for the Laplace operator. The idea was then expanded and generalized to several scattering problems \cite{Hassell-Melrose-Vasy-04,Dyatlov-12,Datchev-Dyatlov-13,Vasy-13,Hintz-Vasy-18,Dyatlov-Zworski-19-2,ColinDeVerdiere-18}. In these settings, sources are usually smooth manifolds but Dyatlov and Zworski \cite{Dyatlov-Zworski-16} observed that the same principle could be applied to Anosov flows (where the source, namely $E_s^*$, is only a Hölder-continuous distribution in $T^*\M$).

In \cite{Dyatlov-Zworski-16}, no explicit estimate was given on the threshold of regularity but later on, the authors gave a more precise statement in \cite[Appendix E]{Dyatlov-Zworski-19}. Although their development is quite recent, source estimates have now entered the standard toolbox of analysis of hyperbolic dynamics. Most relevant to our purposes, Guillarmou and de Poyferré \cite{Guillarmou-Poyferre-21} used paradifferential calculus to obtain a version of Dyatlov and Zworski's estimates for flows of finite regularity.

To the best of our knowledge, our result is the first source estimate involving Hölder-Zygmund instead of Sobolev norms. If we were giving the estimate for $L^2$-based spaces, the threshold $\omega_{\pm}(\X)$ would have to be replaced by another quantity, which is larger in general for the examples of interest to us. This motivated our research and is discussed with more details in Appendix \ref{appendix:b}. Additionally, Hölder-Zygmund regularity is a more natural choice when studying some dynamical problems (see \S\ref{section:cohomological} and \S\ref{section:mls} for applications). See also \cite{dlLLave-2001} for a discussion of Sobolev regularity vs H\"older regularity, and a similar estimate of regularity thresholds in the case of diffeomorphisms. 

Let us explain shortly why it is customary to use Sobolev norms when doing propagation of singularities for a certain (pseudo)differential operator $P$. There are two available schemes of proof for a source estimate. The first, technically simpler one, is using Egorov's Theorem. The second one relies on the so-called ``positive commutator argument'' of Hörmander. The latter relies on G\r{a}rding estimate, so that it can only be applied in a Hilbert space context. The first one uses crucially the propagator $e^{itP}$, which has to be bounded on the relevant spaces. It is well known that the wave or the Schrödinger propagators are not bounded on Hölder-Zygmund spaces. In fact, the only reasonable class of operators for which we have propagators bounded on $C^s_\ast$ spaces are vector fields. 

We are convinced that it would be possible to extend our result without much effort to the case of Besov spaces or other types of spaces based on interpolation and Littlewood-Paley constructions, still for vector fields. In a recent paper, Wang \cite{Wang-20} obtained a source estimate for more general type of operators in some $L^2$-based Besov spaces. His estimates involve some loss of regularity, but they apply to operators for which our proof could not work, because the propagator is not bounded in the relevant spaces. 

In a first draft of our article\footnote{Still available online \url{https://arxiv.org/abs/2011.06403}.} we had obtained a much weaker estimate on the value of the threshold. Given $A$ a pseudo-differential operator, the usual Egorov theorem gives a precise description of $A_t = e^{-tX}Ae^{tX}$. A weaker form of the statement just gives the wavefront set of $A_t$. It turns out that it is better for our purpose here to work only with the latter statement. This was suggested by the reading of \cite{Baladi-Tsujii-07}. This article of Baladi and Tsujii deals with Anosov diffeomorphisms, its methods were adapted to the case of flows by \cite{Adam-19} (see also \cite{Adam-Baladi-2021}).

\subsection{Regularity in hyperbolic dynamics}

Theorem \ref{theorem:source} has a straighforward consequence for regularity questions in hyperbolic dynamics. More precisely, it allows to give a systematic microlocal approach to studying regularity of solutions of \emph{cohomological equations} associated with the Anosov flow $(\varphi_t)_{t \in \R}$. The simplest such equation is $X u = f$, where $f$ is a given $C^\infty(\M)$ function. A satisfying criterion for existence of solutions is due to Liv\v{s}ic \cite{Livsic-72}, but it produces \emph{a priori} a \emph{Lipschitz-continuous} solution $u \in C^{\mathrm{Lip}}(\M)$. It was not until 1986 that de La Llave, Marco and Mory\'on \cite{DeLaLlave-Marco-Moryon-86} proved a bootstrap result, namely that if a Lipschitz-continuous solution $u$ exists, it is indeed $C^\infty(\M)$. This result was generalized in many directions, and used in several contexts. In this paper, we will consider cohomological equations on vector bundles and we assume that we have, as in the previous paragraph, a bundle $\E \to \M$ with Lie derivative $\X$.

From a microlocal point of view, the regularity of $u$ given that $\X u=f$ can be first understood with the principal symbol of $\mathbf{X}$. Equation \eqref{equation:lie} guarantees that the principal symbol of $-i\X$ is diagonal and given by $p(x,\xi) = \langle \xi,X(x)\rangle \mathbbm{1}_{\E}$. It follows that as a differential operator, $\X$ is elliptic in the direction $E^\ast_0$. In turn, elementary elliptic theory results imply that $\WF(u)\subset E^\ast_u\oplus E^\ast_s$. In simpler terms, $u$ is smooth along the orbits of the flow. 

On the characteristic set $\{p= 0\} = E^\ast_u \oplus E^\ast_s$, usual elliptic theory gives us no information, and we have to use more precise information on $\X$ than just its principal symbol to study the regularity of $u$. This extra-information is provided precisely by the radial source estimate of Theorem \ref{theorem:source} which tells us that microlocally near the source $E^\ast_s$ and the sink $E^\ast_u$, $u$ is smooth. By standard propagation of singularity, we can then conclude that $u$ is smooth microlocally everywhere on $\{p= 0\}$. We state this as the following result:

\begin{theorem}
\label{theorem:regularity}
Let $\rho> \max(0,\omega_+(\X),\omega_-(\X))$. Assume that $f\in C^{\infty}(\M,\E)$ and $u\in C^\rho(\M,\E)$ satisfy $\mathbf{X}u=f$. Then $u\in C^{\infty}(\M,\E)$.
\end{theorem}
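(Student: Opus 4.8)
The plan is to run the classical microlocal propagation-of-singularities scheme, now in the Hölder--Zygmund category, using Theorem~\ref{theorem:source} to handle the two radial sets $E_s^\ast$ and $E_u^\ast$. First I would exploit ellipticity away from the characteristic set: by \eqref{equation:lie} the principal symbol of $-i\X$ is $p(x,\xi)=\langle\xi,X(x)\rangle\,\mathbbm{1}_{\E}$, which is invertible precisely off $\Sigma:=E_u^\ast\oplus E_s^\ast$, so $\X$ is elliptic there. Since $\X u=f\in C^\infty$, the elementary elliptic estimate already recalled in the text gives $\WF(u)\subset\Sigma$, where $\WF$ denotes the $C^\infty$ wavefront set.

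Next I would apply Theorem~\ref{theorem:source} twice, once at $E_s^\ast$ and once at $E_u^\ast$. Let $A$ be a cutoff microsupported in a small conical neighbourhood of $E_s^\ast$ and let $B$ be as in that theorem. Because $\rho>\omega_+(\X)$, we have $u\in C^\rho\subset C^\rho_\ast$, hence $Au\in C^\rho_\ast$ with exponent above $\omega_+(\X)$, while $B\X u=Bf\in C^\infty\subset C^{\rho'}_\ast$ for every $\rho'$; the bootstrap part of Theorem~\ref{theorem:source} then yields $Au\in C^{\rho'}_\ast$ for every $\rho'>\omega_+(\X)$, so $u$ is microlocally smooth near $E_s^\ast$ and $\WF(u)\cap E_s^\ast=\emptyset$. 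For the sink I would apply the same theorem to the reversed flow: $-\X$ lifts $-X$ in the sense of \eqref{equation:lie}, the stable bundle of $(\varphi_{-t})$ is $E_u$ (with annihilator $E_u^\ast$), and reversing time and swapping $E^u\leftrightarrow E^s$ in \eqref{equation:threshold} gives $\omega_+(-\X)=\omega_-(\X)$. As $(-\X)u=-f\in C^\infty$ and $\rho>\omega_-(\X)$, Theorem~\ref{theorem:source} applied to $-\X$ gives that $u$ is microlocally smooth near $E_u^\ast$, so $\WF(u)\cap E_u^\ast=\emptyset$.

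Finally I would close the argument by ordinary propagation of singularities along $\Phi_t$: since $p$ is real with $dp=X(x)\neq0$ on $\{p=0\}$, Hörmander's theorem (applied with $\WF(f)=\emptyset$) shows that $\WF(u)$ is a closed, $\Phi_t$-invariant subset of $\Sigma\setminus0$. In the radial compactification the projectivized flow on $\Sigma\cap\partial T^\ast\M$ has $E_s^\ast$ as a source and $E_u^\ast$ as a sink, and any $\zeta$ lying outside these two sets satisfies $\Phi_{-t}(\zeta)/\|\Phi_{-t}(\zeta)\|\to E_s^\ast$ as $t\to+\infty$. Hence, were $\WF(u)$ nonempty, either it already meets $E_s^\ast\cup E_u^\ast$, or the backward orbit of such a $\zeta\in\WF(u)$ accumulates, by invariance and closedness, on $E_s^\ast$ --- in both cases contradicting the preceding paragraph. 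Therefore $\WF(u)=\emptyset$, i.e. $u\in C^\infty(\M,\E)$.

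The conceptually hard part is already packaged in Theorem~\ref{theorem:source}; what I would be most careful about is (i) checking the hypotheses of its bootstrap version (here one takes the a priori exponent $\rho_0=\rho$ and lets the target regularity be arbitrarily large) and that a full neighbourhood of the merely Hölder-continuous set $E_s^\ast$ is covered by the microsupports of admissible cutoffs; (ii) the time-reversal bookkeeping, in particular the identity $\omega_+(-\X)=\omega_-(\X)$ and the fact that $-\X$ still satisfies \eqref{equation:lie}; and (iii) making sure one may legitimately invoke propagation of singularities between source and sink --- for the pure $C^\infty$ statement wanted here this is Hörmander's classical result, but if one instead wishes to keep track of the $C^\rho_\ast$ scale one needs its Hölder--Zygmund analogue, which for the vector field $\X$ follows from boundedness of the propagator $e^{-t\X}$ on $C^\rho_\ast$ rather than from a positive-commutator argument.
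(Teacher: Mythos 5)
Your proposal is correct and follows essentially the same route as the paper's proof: ellipticity off $E_u^\ast\oplus E_s^\ast$, the bootstrap form of Theorem~\ref{theorem:source} at the source (and its time-reversed version at the sink, with the identification $\omega_+(-\X)=\omega_-(\X)$), and then standard propagation of singularities combined with the source/sink structure of $\Phi_t$ on the characteristic set to conclude $\WF(u)=\emptyset$. The only cosmetic difference is that the paper invokes its Hölder--Zygmund propagation estimate (Proposition~\ref{prop:usual-propagation-Cs}) where you cite Hörmander's classical theorem, a substitution you already flag as harmless for the $C^\infty$ conclusion.
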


Note that $\rho$ is always positive. The space $C^\rho(\M,\E)$ is the usual space of Hölder regularity for sections of $\E$. In the following, it will be more convenient for us to work with Hölder-Zygmund regularity $C^\rho_*(\M,\E)$ instead. The spaces $C^\rho(\M,\E)$ and $C^\rho_*(\M,\E)$ agree for $\rho > 0$ not equal to an integer but they are different for $\rho \in \N$, see \S\ref{section:pseudo} for further details. We also point out that one could extend to other regularities the statement: actually, it holds \emph{verbatim} with $C^\infty(\M,\E)$ being replaced by $C^s_*(\M,\E)$ for any $s \geq \rho$.

Theorem \ref{theorem:regularity} is not new. Indeed, it was proved in the case of the trivial bundle and $\X = X$ in \cite{DeLaLlave-Marco-Moryon-86}. Their proof uses the Journé Lemma \cite{Journe-86}, and has been adapted to deal with the general bundle case --- see for example \cite{Hasselblatt-92}. The use of microlocal estimates to deal with this kind of problems originates in \cite{Guillarmou-17-1}, where Guillarmou recovered the volume-preserving case of \cite{DeLaLlave-Marco-Moryon-86}. 

Theorem \ref{theorem:regularity} has strong consequences on rigidity questions in hyperbolic dynamical systems: it is the base of most of the standard regularity results \cite{Livsic-72, DeLaLlave-Marco-Moryon-86,Nitica-Torok-98} of the Liv\v{s}ic (cocycle) theory (see \S\ref{section:cohomological}, Theorems \ref{theorem:smoothness} and \ref{theorem:nt}) and the rigidity result of Hasselblatt \cite{Hasselblatt-92} relative to the smoothness of the Anosov foliation (see Theorem \ref{theorem:rigidity-foliation}).

\subsection{Organization of the paper} In Section \S\ref{section:pseudo}, we introduce Hölder-Zygmund spaces and show some elementary properties. In Section \S\ref{section:estimates}, we prove the source estimate of Theorem \ref{theorem:source}. We then turn to applications. In Section \S\ref{section:cohomological}, we apply Theorem \ref{theorem:source} to the study of (linear) cohomological equations in hyperbolic dynamical systems. We also discuss rigidity of the foliation of Anosov flows. In Section \S\ref{section:mls}, we apply the source estimate of Theorem \ref{theorem:source} to prove our main Theorem \ref{theorem:mls}.  \\

\noindent \textbf{Acknowledgement:} We thank Colin Guillarmou, Jian Wang, Maciej Zworski, Malo Jézéquel, Semyon Dyatlov and Viviane Baladi for fruitful discussions. We also thank Gabriel Paternain for suggesting the use of Ado's theorem in Theorem \ref{theorem:nt} in order to deal with general Lie groups. This project has received funding from the European Research Council (ERC) under the European Union’s Horizon 2020 research and innovation programme (grant agreement No. 725967).

\section{Pseudodifferential operators on Hölder-Zygmund spaces}

\label{section:pseudo}

The goal of this section is to introduce the scale of Hölder-Zygmund spaces $C^s_*$ (for $s \in \R$). We also provide several technical lemmata, describing the action of pseudo-differential operators on these spaces.

\subsection{Semiclassical analysis}

\label{ssection:analysis}

We briefly review some key features of semiclassical analysis on manifolds. For any $m \in \R$, $\rho,\delta \in [0,1]$, such that $1-\rho \leq \delta < \rho$, we define the Fréchet space $S^m_{\rho,\delta}(T^*\R^n)$ as the space of smooth functions $p$ on $T^*\R^n$ such that for all $\alpha,\beta \in \N^n$, there exists $C_{\alpha,\beta} > 0$ such that for all $(x,\xi) \in T^*\R^n$:
\begin{equation}
\label{equation:symboles}
|\partial^\alpha_\xi \partial^\beta_x p(x,\xi)| \leq C_{\alpha,\beta} \langle \xi \rangle^{m-\rho|\alpha|+\delta|\beta|}
\end{equation}
The standard space is $S^m_{1,0}(T^*\R^n)$ obtained for $\rho=1,\delta=0$ and we will denote it by $S^m(T^*\R^n)$ for the sake of simplicity.

On a manifold $\M$, we define the class $S^m_{\rho,\delta}(T^*\M)$ as the space of smooth functions $p \in C^\infty(T^*\M)$ satisfying \eqref{equation:symboles} in local charts. Since \eqref{equation:symboles} is locally invariant by the action of the group of diffeomorphisms, one can check that $S^m_{\rho,\delta}(T^*\M)$ is intrinsically defined on $\M$. We will also abuse notations, and allow symbols to depend on additionnal mute parameters (such as $h>0$), with the convention that \eqref{equation:symboles} is satisfied uniformly in those parameters.

On $\R^n$, we recall that the left quantization of a symbol $p \in S^m(T^*\R^n)$ is defined by:
\[
\Op(p)u(x) = \frac{1}{(2\pi)^n}\int_{\R^n_y} \int_{\R^n_\xi} e^{i \langle x-y,\xi\rangle} p\left(x,\xi\right) u(y) \dd y \dd \xi,
\]
and the semiclassical quantization is defined by
\[
\Op_h(p) := \Op(p(\cdot, h\cdot)).
\]
This also allows to define pseudodifferential operators on manifolds. We consider $(\kappa_i,U_i)$ a family of cutoff charts, namely a family of open sets such that $\M = \cup_{i=1}^N U_i$ and $\kappa_i : U_i \rightarrow \kappa_i(U_i) \subset \R^n$ is a diffeomorphism. We consider a partition of unity $\sum_{i=1}^N \Theta_i = \mathbf{1}$ subordinate to the cover $(U_i)_{i=1}^N$ and let $\Theta_i'$ be functions supported on $U_i$ such that $\Theta_i' \equiv 1$ on the support of $\Theta_i$. We let $\theta_i := \Theta_i \circ \kappa_i^{-1}$. Let $p \in S^m(T^*\M)$. We define:
\begin{equation}
\label{equation:quantization}
\Op_h(p) := \sum_{i=1}^N \kappa_i^*\theta_i \Op_h((\kappa_i^{-1})^*p) (\kappa_i^{-1})^*\Theta_i' ,
\end{equation}
where, for $(x,\xi) \in T^*\R^n$, $(\kappa_i^{-1})^*p(x,\xi) = p(\kappa_i^{-1}(x), \dd (\kappa_i^{-1})^{-\top}_x(\xi))$. The quantization procedure \eqref{equation:quantization} is highly non-canonical but one can define an intrinsic principal symbol map $\sigma_h : S^m(T^*\M) \rightarrow S^m(T^*\M)/hS^{m-1}(T^*M)$ such that $\Op_h(p) - \Op_h(\sigma_h(\Op_h(p))) \in h \Psi^{m-1}_h(\M)$. We refer to \cite[Appendix E]{Dyatlov-Zworski-19} for further details. The set of semiclassical pseudodifferential operators is then given by:
\[
\Psi^m_h := \left\{ \Op_h(p) + \mc{O}_{\Psi^{-\infty}_h}(h^\infty) ~|~ p \in S^m(T^*\M)\right\}.
\]
By $\mc{O}_{\Psi^{-\infty}_h}(h^\infty)$, we mean that this operator has smooth Schwartz kernel defined on $\M \times \M$ and that any of its derivatives (in local coordinates) is bounded by $C_N h^N$, for some constant $C_N > 0$, and for all $N \geq 0$.

In our arguments, we will have to deal with combinations of pseudo-differential operators, both classical and semi-classical, or semi-classical with different values of $h$. While this is not technically very difficult, it is a bit unusual, so we include a small discussion.

\begin{lemma}
\label{lemma:bound-composition}
Let $a, b \in S^0(T^*\M)$, so that $b$ is supported in $\{ |\xi|> 1\}$. For $0< h \leq h_0$, $\Op_{h_0}(a)\Op_h(b)$ is $h$-semi-classical, locally uniformly in $h_0$, in the sense that there exists a symbol $c \in S^0(T^*\M)$, supported in $\{ |\xi|>1\}$ such that
\begin{equation}
\label{equation:bound-composition}
\Op_{h_0}(a)\Op_h(b) = \Op_h(c) + \mathcal{O}(h^\infty),
\end{equation}
the remainder being smoothing with uniform estimates in $h_0$. The symbol $c$ may depend on $h$ and $h_0$. The same holds for the product $\Op_h(b)\Op_{h_0}(a)$. Additionally, $c$ can be chosen with same support as $b$, and if $a(\cdot, (h_0/h)\cdot)$ and $b$ have disjoint microsupport (uniformly in $h$ and $h_0$) then we can choose $c=0$. 
\end{lemma}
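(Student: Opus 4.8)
The plan is to reduce \eqref{equation:bound-composition} to a standard composition formula by rewriting everything in terms of the single semiclassical parameter $h$. The key observation is that $\Op_{h_0}(a)$ can itself be viewed as an $h$-semiclassical operator with a rescaled symbol: since $\Op_{h_0}(a) = \Op(a(\cdot, h_0 \cdot)) = \Op_h\big(a(\cdot, (h_0/h)\,\cdot)\big)$, and with $\lambda := h_0/h \geq 1$, the symbol $\tilde a_\lambda(x,\xi) := a(x,\lambda\xi)$ satisfies $|\partial_\xi^\alpha \partial_x^\beta \tilde a_\lambda(x,\xi)| \leq C_{\alpha,\beta}\,\lambda^{|\alpha|}\langle \lambda \xi\rangle^{-|\alpha|} \leq C_{\alpha,\beta}$ uniformly in $\lambda \geq 1$, so $\tilde a_\lambda \in S^0(T^*\M)$ uniformly. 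Thus both factors $\Op_h(\tilde a_\lambda)$ and $\Op_h(b)$ are genuine $h$-pseudodifferential operators of order $0$ with symbols in bounded subsets of $S^0$, and their composition is governed by the usual semiclassical calculus on manifolds (as recalled in \cite[Appendix E]{Dyatlov-Zworski-19}). This immediately yields a symbol $c = c(h,h_0) \in S^0(T^*\M)$, lying in a bounded subset of $S^0$ uniformly in $h_0$, with $\Op_h(\tilde a_\lambda)\Op_h(b) = \Op_h(c) + \mathcal{O}_{\Psi_h^{-\infty}}(h^\infty)$, and the product in the reverse order is handled identically.

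Next I would address the support claims. The composition formula for the left quantization gives $c(x,\xi) \sim \sum_{k \geq 0} \frac{h^k}{k!} \partial_\xi^k \tilde a_\lambda(x,\xi)\, D_x^k b(x,\xi)$ (in the standard asymptotic sense, with the chart-transition terms contributing further $\mathcal{O}(h^\infty)$ pieces away from the diagonal after the usual microlocalization). Every term in this expansion contains a factor $D_x^k b$, hence is supported in $\supp b$; therefore one may choose $c$ with $\supp c \subseteq \supp b$, and in particular $c$ is supported in $\{|\xi| > 1\}$ since $b$ is. For the disjointness statement: if $\tilde a_\lambda$ and $b$ have disjoint microsupports uniformly in $\lambda$ — meaning there is a conic neighbourhood of $\WF_h(b)$ on which all seminorms of $\tilde a_\lambda$ are $\mathcal{O}(\langle\xi\rangle^{-\infty})$ uniformly — then by the non-stationary phase / standard microlocality of the semiclassical product (again \cite[Appendix E]{Dyatlov-Zworski-19}), the full symbol $c$ is $\mathcal{O}_{S^{-\infty}}(h^\infty)$, so $\Op_h(c)$ can be absorbed into the remainder and we take $c = 0$.

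The only genuinely delicate point — and the one I expect to be the main obstacle — is tracking the \emph{uniformity in $h_0$} of all remainders and symbol seminorms. The subtlety is that $\lambda = h_0/h$ ranges over the unbounded set $[1,\infty)$ as $h \to 0$ with $h_0$ fixed, so one must check that the $S^0$-seminorm bounds on $\tilde a_\lambda$, and the constants $C_N$ in the $\mathcal{O}(h^N)$ remainder estimates coming from the composition formula, do not degenerate as $\lambda \to \infty$. The computation above shows the seminorms of $\tilde a_\lambda$ are in fact \emph{bounded} uniformly in $\lambda \geq 1$ (the worst case being the $\langle \lambda\xi\rangle^{-|\alpha|}\lambda^{|\alpha|} \leq 1$ bound), which is exactly what makes the standard calculus apply with constants depending only on finitely many of these seminorms; combined with the fact that the chart data $(\kappa_i,\Theta_i,\Theta_i')$ in \eqref{equation:quantization} are fixed, one gets estimates depending on $h_0$ only through the fixed seminorms of $a$ itself, hence locally uniformly (indeed uniformly) in $h_0 \in (0,h_0]$. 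One should also note that $c$ is genuinely allowed to depend on both $h$ and $h_0$, as the statement permits, which removes any need to extract an $h_0$-independent symbol. With these uniformity checks in place the lemma follows.
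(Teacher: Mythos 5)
Your reduction to a single parameter via $\Op_{h_0}(a)=\Op_h(\tilde a_\lambda)$ with $\tilde a_\lambda(x,\xi)=a(x,\lambda\xi)$, $\lambda=h_0/h$, is the natural starting point, but the key estimate you base everything on is false. You claim $\lambda^{|\alpha|}\langle\lambda\xi\rangle^{-|\alpha|}\leq C_{\alpha,\beta}$ uniformly in $\lambda\geq 1$; at $\xi=0$ this quantity equals $\lambda^{|\alpha|}$, which is unbounded as $h\to 0$ with $h_0$ fixed. More generally, for $|\xi|\lesssim 1/\lambda$ the $\xi$-derivatives of $\tilde a_\lambda$ grow like $\lambda^{|\alpha|}\leq h^{-|\alpha|}$, so $\tilde a_\lambda$ does \emph{not} lie in a bounded subset of $S^0$ (it is borderline $S^0_{1,1}$ in the $h$-calculus), and the standard semiclassical composition theorem cannot be invoked for the full product. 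This is not a peripheral uniformity check --- it is the entire content of the lemma, and your "worst case $\leq 1$" resolution of what you correctly identify as the delicate point does not hold. A symptom of the gap: your argument never uses the hypothesis $\supp b\subset\{|\xi|>1\}$ except to locate the support of $c$, whereas that hypothesis is what makes the statement true at all.

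The way to repair this (and what the paper does) is to split $a=a_0+a_1$ with a cutoff in $\xi$, where $a_0$ is supported in $\{|\xi|<h_0/2h\}$ and $a_1$ in $\{|\xi|>h_0/4h\}$ (in the $h_0$-scale). After rescaling, $a_1(\cdot,\lambda\cdot)$ is supported in $\{|\xi|>1/4\}$, where your estimate $\lambda^{|\alpha|}\langle\lambda\xi\rangle^{-|\alpha|}\leq|\xi|^{-|\alpha|}\leq C\langle\xi\rangle^{-|\alpha|}$ \emph{is} valid, so that piece genuinely is a uniform $h$-pseudodifferential operator and your composition argument (including the support and microsupport statements) applies to it. For the low-frequency piece $a_0$, whose rescaled symbol is disjoint from $\supp b\subset\{|\xi|>1\}$, one must show directly that $\Op_{h_0}(a_0)\Op_h(b)$ is $\mathcal{O}(h^\infty)$-smoothing; this is done by writing the composed symbol as an oscillatory integral and integrating by parts in $z$, using that the $\eta$-integration never meets $\eta=0$ because of the support separation, and then checking by hand that the resulting bounds are uniform in $h_0\geq h$. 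A secondary point you gloss over: remainders produced by the $h_0$-quantization (chart transitions, cutoffs) are a priori only $\mathcal{O}(h_0^\infty)$, not $\mathcal{O}(h^\infty)$; one upgrades them by composing with $\Op_h(b)$, which maps $H^{-N}\to H^{-3N}$ with norm $\mathcal{O}(h^N)$ precisely because $b$ vanishes near $\xi=0$. Without these two steps the proof is incomplete.
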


We point out here that this lemma will be applied at various places with different parameters. In particular, we will sometimes use the parameter $2^{-j}$ instead of $h_0$ or $h$.

\begin{proof}
We start by observing that if we add a $\mathcal{O}(h^\infty)$-smoothing term to $\Op_h(b)$, it does not change the result. This is also true if we add a $\mathcal{O}(h_0^\infty)$-smoothing term to $\Op_{h_0}(a)$. Indeed, if $R$ is such a term, we have for some uniform $C_{2N} > 0$ independent of $h,h_0$:
\[
\| R \Op_h(b)\|_{H^{-N}\to H^N} \leq C_{2N} \| \Op_h(b)\|_{H^{-N}\to H^{-3N}},
\]
where $H^s$ denotes the standard $h$-independent Sobolev space. Then, since $b$ is supported in $\{ |\xi|> 1 \}$, we can construct symbols $b_N$ of order $-2N$ for $N>0$ such that
\[
\Op_h(b) = (-h^2\Delta)^N\Op_h(b_N) + \mathcal{O}_{H^{-N}\to H^N}(h^N).
\]
Then, using that $\Op_h(b_N)$ is a $0$-th order classical pseudodifferential operator uniformly in $h$, it is uniformly bounded on $H^{-N}$ as $h\to 0$, so that 
\[
\| \Op_h(b)\|_{H^{-N}\to H^{-3N}} \leq C_N h^N. 
\]
From these observations, we deduce that we can work directly in $\R^n$. Indeed, when dealing with changes of chart and smooth cutoffs, the errors we have to deal with will be of the form we have just discussed. 

Another reduction that we can make is the following. We can write
\[
\Op_{h_0}(a) = \Op_{h_0}(a \chi + a(1-\chi)) = \Op_{h_0}(a_0+a_1),
\]
where $\chi$ is a cutoff (only depending on $\xi$) so that $a_0$ is supported in $|\xi| < h_0/2h$ and $a_1$ in $|\xi| > h_0/4h$. The part $\Op_{h_0}(a_1)$ will contribute by a $h$-semi-classical operator, since $\Op_{h_0}(a_1) = \Op_h (a_1(\cdot, h_0\cdot /h))$. This can be dealt with using usual results on compositions of such pseudors. We can thus assume that $a=a_0$ is supported for $|\xi|< h_0 /2h$, and it suffices to prove that the product is $\mc{O}(h^\infty)$-smoothing.

Using usual formulæ for the composition of pseudo-differential operators on $\R^n$ \cite[Theorem 4.14]{Zworski-12}, it suffices to prove that this is $\mathcal{O}(h^\infty \langle\xi\rangle^{-\infty})$ in symbol norm, uniformly in $h_0$:
\[
c (x, \xi) = \frac{1}{(2\pi )^n}\int_{\R^{2n}} e^{-i \langle z, \eta  \rangle } a\left(x, \frac{h_0}{h}\xi + h_0\eta\right) b(x+z, \xi) dz d\eta. 
\]
When estimating its symbol norms, from the formula, we see that the derivatives in $x$ do not pose a problem, and the derivatives in $\xi$ only do if they hit $a_0$. We are left with proving:
\[
I_N :=(h/h_0)^{-N}\left| \int e^{-i \langle z, \eta  \rangle } (\partial_\xi^N a_0)\left(x, \frac{h_0}{h}\xi + h_0\eta\right) b(x+z, \xi) dz d\eta \right| = \mathcal{O}(h^\infty \langle \xi\rangle^{-\infty}).
\]
Since $b$ is supported for $|\xi|>1$, and $a$ for $|\xi| < h_0/2h$, the domain of the integral does not encounter $\eta = 0$. This integral is thus non-stationary in the $z$ variable, so we integrate by part and find for $M>0$
\begin{align*}
I_N &= (h/h_0)^{-N} \left| \int e^{-i \langle z, \eta  \rangle } (\partial_\xi^N a_0)\left(x, \frac{h_0}{h}\xi + h_0\eta\right) \frac{\Delta_z^M b(x+z, \xi)}{|\eta|^{2M}} dz d\eta \right| \\
		&\leq C_M (h/h_0)^{-N} \int_{|\xi + h\eta|< 1/2} |\eta|^{-2M}  \langle \frac{h_0\xi}{h}+h_0\eta\rangle^{-N} d\eta. \\
		&\leq C_M (h/h_0)^{-N} h_0^{-n} \int_{|\eta|< 1/2} |\eta/h_0 - \xi/h|^{-2M}  \langle \eta \rangle^{-N} d\eta.\\
		&\leq C_M (h/h_0)^{-N} h_0^{-n} |\xi/h|^{-2M} \\
		&\leq C_M h_0^{N-n} h^{2M- N} \langle\xi\rangle^{-2M}.
\end{align*}
If $N \geq n$, taking $M$ arbitrarily large yields the claimed estimate. If $N < n$, one also has to use that $h_0 > h$. In both cases, the claim is proved.

\end{proof}

\subsection{Hölder-Zygmund spaces}
\label{ssection:hz-spaces}

The usual definition of the H\"older-Zygmund spaces is given by a description using the Littlewood-Paley decomposition that we recall. Let $\psi \in C^\infty(\R)$ be a smooth cutoff function such that $\psi \equiv 1$ on $[-1,1]$ and $\psi \equiv 0$ outside $[-2,2]$. We define $\varphi_0(\xi) = \psi(|\xi|)$ and $\varphi_j(\xi) = \psi(2^{-j}|\xi|) - \psi(2^{-j+1}|\xi|) = \varphi(2^{-j}|\xi|)$, where $\varphi(s) = \psi(s)-\psi(2s)$. It is then customary to set for $s \in \R$:
\[
\|u\|_{C^s_*(\R^n)} := \sup_{j \in \N} 2^{js} \|\Op(\varphi_j)u\|_{L^\infty}.
\]
In particular, one has the equality:
\[
u = \sum_{k=0}^{+\infty} \Op_1(\varphi_k) u,\ u\in \mathcal{S}'(\R^n).
\]
For the manifold $\M$, the $C^s_\ast(\M)$-norm is then defined via the use of charts as:
\begin{equation}
\label{equation:definition-co}
\|u\|_{C^s_*(\M)} := \sup_{i \in \left\{1,...,N\right\}} \sup_{j \in \N} 2^{js} \|\Op_1(\varphi_j) {\kappa_i^{-1}}^*(\Theta'_i u)\|_{L^\infty}.
\end{equation}

For our purposes, this is not really practical. Let us introduce a set of other norms. We consider $s\in \R$ and $h_0>0$. Then we let $\eps>0$ and take $\Phi \in C^\infty_{\mathrm{comp}}(T^*\M)$ a non-negative symbol supported in $\{(1+\eps)^{-1} <|\xi|< 1+\eps\}$, and equal to $1$ in $\{ (1+\eps/2)^{-1} < |\xi|< 1+\eps/2\}$. We also consider $\Psi \in C^\infty_{\mathrm{comp}}(T^*\M)$ supported in $\{ |\xi|< 3\}$, and equal to $1$ in $\{ |\xi|<2\}$. We let:
\[
\| u \|_{s,h_0,\Phi,\Psi} := \| \Op_{h_0}(\Psi) u \|_{L^\infty} + \sup_{0<h<h_0} h^{-s}\|\Op_h(\Phi)u\|_{L^\infty}.
\]
This is related to the $C^s_\ast$ norm:
\begin{lemma}\label{lemma:comparaison-norm}
The semi-norm $\| \cdot \|_{s,h_0,a,b}$ is a norm for $h_0$ small enough, equivalent to $\|\cdot\|_{C^s_\ast}$. More precisely, for $s\geq 0$, there exists $C > 0$ such that for all $h_0$ small enough:
\begin{align*}
\| u \|_{s,h_0,\Phi,\Psi} &\leq C \| u \|_{C^s_\ast} \\
\| u \|_{C^s_\ast} &\leq  C \left(h_0^{-s}\| \Op_{h_0}(\Psi) u \|_{L^\infty}  +  \sup_{0<h<h_0} h^{-s}\|\Op_h(\Phi)u\|_{L^\infty}\right).
\end{align*}
\end{lemma}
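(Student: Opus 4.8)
The plan is to establish the equivalence between the Littlewood--Paley definition \eqref{equation:definition-co} and the operator-theoretic norm $\|\cdot\|_{s,h_0,\Phi,\Psi}$ by moving through dyadic scales $h = 2^{-j}$. First I would observe that the natural comparison is really between the two families of operators $\{\Op_1(\varphi_j)\}_j$ and $\{\Op_{2^{-j}}(\Phi)\}_j$, $\Op_{h_0}(\Psi)$: both are quantizations of symbols localized to dyadic annuli (or to a ball, in the low-frequency case), with the key structural fact being that $\Phi$ equals $1$ on a neighbourhood of the support of $\varphi(|\cdot|)$ after rescaling, and conversely a fixed finite number of the $\varphi_j$'s cover the support of the rescaled $\Phi$. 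The first inequality, $\|u\|_{s,h_0,\Phi,\Psi}\leq C\|u\|_{C^s_\ast}$, is then obtained by writing $\Op_h(\Phi) u = \sum_k \Op_h(\Phi)\Op_1(\varphi_k)u$, noting (via Lemma \ref{lemma:bound-composition}, applied with parameters $h$ and $2^{-k}$) that $\Op_h(\Phi)\Op_1(\varphi_k) = \mc{O}(h^\infty 2^{-k\infty})$ unless $2^{-k}$ is comparable to $h$ (disjoint microsupport kills the term), so only $O(1)$ values of $k$ contribute, each bounded in $L^\infty$ by $C 2^{ks}\cdot 2^{-ks}\|u\|_{C^s_\ast}\sim C h^s\|u\|_{C^s_\ast}$ since $\Op_h(\Phi)$ is uniformly bounded on $L^\infty$ as $h\to0$ (it is a classical order-zero $\Psi$DO uniformly in $h$). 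The low-frequency piece $\Op_{h_0}(\Psi) u$ is handled the same way, summing over the bounded range of $k$ with $2^{-k}\gtrsim h_0$.

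For the reverse inequality I would run the dual decomposition: for each $j$, write $\Op_1(\varphi_j) u$ in terms of the $\Op_h(\Phi)$ with $h$ near $2^{-j}$ (for $2^{-j}$ small) or in terms of $\Op_{h_0}(\Psi)$ (for the finitely many low $j$ with $2^{-j}\gtrsim h_0$). Concretely, since $\Phi\equiv 1$ on a slightly larger annulus than $\supp\varphi$, one has $\Op_{2^{-j}}(\Phi)$ acting as (essentially) the identity on the range of $\Op_1(\varphi_j)$, so $\Op_1(\varphi_j) u = \Op_1(\varphi_j)\Op_{2^{-j}}(\Phi) u + (\text{smoothing, controlled})$, whence $2^{js}\|\Op_1(\varphi_j)u\|_{L^\infty}\leq C\, (2^{-j})^{-s}\|\Op_{2^{-j}}(\Phi)u\|_{L^\infty}\leq C\sup_{0<h<h_0}h^{-s}\|\Op_h(\Phi)u\|_{L^\infty}$, using uniform $L^\infty$-boundedness of $\Op_1(\varphi_j)$. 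For the $O(\log(1/h_0))$ low frequencies one similarly writes $\Op_1(\varphi_j)u = \Op_1(\varphi_j)\Op_{h_0}(\Psi)u + \mathrm{smoothing}$ and picks up the factor $h_0^{-s}$, which explains the asymmetric form of the stated upper bound (the $h_0^{-s}$ in front of the $\Op_{h_0}(\Psi)$ term). I would also need to say a word on why $\|\cdot\|_{s,h_0,\Phi,\Psi}$ is genuinely a norm (not just a seminorm) for $h_0$ small: if it vanishes then all dyadic blocks of $u$ vanish, so $u=0$; this uses that $\sum_k\Op_1(\varphi_k)u = u$ reconstructs $u$, together with the fact (again via Lemma \ref{lemma:bound-composition}) that $\Op_1(\varphi_k)u$ can be recovered from $\Op_h(\Phi)u$ up to harmless smoothing errors, which themselves vanish.

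The main technical obstacle I anticipate is bookkeeping the chart-dependence and the smoothing remainders: the $C^s_\ast$ norm in \eqref{equation:definition-co} is defined via cutoff charts $\kappa_i$ and the cutoffs $\Theta_i,\Theta_i'$, whereas $\|\cdot\|_{s,h_0,\Phi,\Psi}$ is (implicitly) a global manifold quantity, so one must check that conjugating by charts and inserting the partition of unity only produces errors of the ``$\mc{O}(h^\infty)$ smoothing, uniformly in the secondary parameter'' type already controlled in the proof of Lemma \ref{lemma:bound-composition} — indeed that proof was written precisely so that these chart-and-cutoff errors fall under its umbrella. The other place requiring a little care is the dependence of constants on $h_0$: all the compositions of an $h$-quantization with an $h_0$-quantization must be uniform in $h_0$ in the stated range, which is exactly the ``locally uniformly in $h_0$'' clause of Lemma \ref{lemma:bound-composition}; I would simply invoke it rather than redo the stationary-phase estimates. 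Finally, for $s\geq 0$ the summation $\sum_k$ over the $O(1)$ surviving indices causes no divergence, so no Littlewood--Paley square-function subtleties arise — this is why the statement is restricted to $s\geq 0$, and I would note that the general-$s$ case (needed elsewhere) would follow by the same argument combined with the mapping properties of $\Psi$DOs on these spaces established later in \S\ref{section:pseudo}.
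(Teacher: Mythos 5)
Your proposal is correct and follows essentially the same route as the paper: both directions are proved by intertwining the dyadic blocks $\Op(\varphi_j)$ with $\Op_h(\Phi)$ and $\Op_{h_0}(\Psi)$, discarding the off-scale compositions via Lemma \ref{lemma:bound-composition} and bounding the surviving ones with the $L^\infty$-boundedness of Lemma \ref{lemma:boundedness-phi}, with the $h_0^{-s}$ factor coming from the finitely many (in number $O(\log(1/h_0))$) low frequencies. The only imprecision is the claim that $\Op_{2^{-j}}(\Phi)$ acts as the identity on the range of $\Op(\varphi_j)$: since $\Phi$ equals $1$ only on a thin annulus of width $\sim\eps$ while $\varphi_j$ lives on a full dyadic annulus, you must cover $\supp\varphi_j$ by finitely many rescaled copies of $\{\Phi=1\}$, i.e.\ take a supremum over $h\in(2^{-1-j},2^{1-j})$ as the paper does --- which is precisely why the norm is defined with a supremum over continuous $h<h_0$ rather than over dyadic scales only.
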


We will also need 
\begin{lemma}\label{lemma:comparaison-rho-rho'}
Let $\rho < \rho'$. Let $b \in S^0(T^*\M)$ be a $0$-th order symbol supported for $\left\{|\xi|>1/2\right\}$. Then, there exists $C > 0$ such that for all $h>0$ small enough, 
\[
\| \Op_h(b) u \|_{C^{\rho}_\ast} \leq Ch^{\rho'-\rho} \| \Op_h(b) u \|_{C^{\rho'}_\ast} + C h^N \|u\|_{C^{-N}}.
\]
On the other hand, if $b$ is supported in $\left\{|\xi|<2\right\}$, we have the converse estimate 
\[
\| \Op_h(b) u \|_{C^{\rho'}_\ast} \leq Ch^{\rho-\rho'} \| \Op_h(b) u \|_{C^{\rho}_\ast} + C h^N \|u\|_{C^{-N}}.
\]
\end{lemma}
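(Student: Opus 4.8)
\medskip

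The plan is to reduce both estimates to a statement about a single Littlewood--Paley block, using Lemma \ref{lemma:comparaison-norm} to pass between $C^s_*$ norms and the microlocalized norms $\|\cdot\|_{s,h_0,\Phi,\Psi}$, and then to exploit that $\Op_h(b)$ with $b$ supported in an annulus in $\xi$ essentially only interacts with the frequency scales comparable to $1/h$. Let me describe the first inequality; the second is symmetric. First I would fix a large $j_0$ with $2^{-j_0}\sim h_0$, split $\Op_h(b)u = \sum_j \Op(\varphi_j)\Op_h(b)u$, and observe by Lemma \ref{lemma:bound-composition} (applied with the dyadic parameter $2^{-j}$ in place of $h_0$ and with $h$ in place of $h$) that $\Op(\varphi_j)\Op_h(b)$ is a composition of pseudodifferential operators at two scales; since $b$ is supported in $\{|\xi|>1/2\}$ and $\varphi_j$ in a dyadic annulus, the composition is $\mathcal{O}(h^\infty)$-smoothing (hence absorbed into the $h^N\|u\|_{C^{-N}}$ term) unless $2^{-j}$ and $h$ are comparable, i.e. unless $j$ lies in a bounded window $|j + \log_2 h| \le C_0$. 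For those finitely many scales, $\Op(\varphi_j)$ acting on $\Op_h(b)u$ is, up to $\mathcal{O}(h^\infty)$, an $h$-semiclassical operator of the form $\Op_h(c_j)$ with $c_j$ supported in a fixed $\xi$-annulus; then $2^{j\rho}\|\Op(\varphi_j)\Op_h(b)u\|_{L^\infty} = h^{\rho - \rho'} \cdot 2^{j\rho'}\|\Op(\varphi_j)\Op_h(b)u\|_{L^\infty} \cdot (2^{-j}/h)^{\rho-\rho'}$, and since $2^{-j}/h$ is bounded above and below this is $\lesssim h^{\rho'-\rho}\|\Op_h(b)u\|_{C^{\rho'}_*}$. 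Taking the sup over $j$ gives $\|\Op_h(b)u\|_{C^\rho_*} \lesssim h^{\rho'-\rho}\|\Op_h(b)u\|_{C^{\rho'}_*} + h^N\|u\|_{C^{-N}}$.

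\medskip

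For the converse estimate, when $b$ is supported in $\{|\xi|<2\}$ one argues in the same way but now the low-frequency block $\Op(\varphi_0)\Op_h(b)u$ also contributes; this is where the power $h^{\rho-\rho'}$ (with $\rho < \rho'$, so a \emph{negative} power of $h$, i.e. a loss) is needed. Concretely, the relevant frequencies $|\xi|$ in $\Op_h(b)$ run over $|\xi| \lesssim 1/h$, i.e. over the dyadic scales $2^{-j} \gtrsim h$; for each such $j$ one has $(2^{-j}/h)^{\rho - \rho'} \le h^{\rho-\rho'}$ since $\rho - \rho' < 0$ and $2^{-j}/h \ge 1$, which supplies the claimed factor after summing the geometric series in $j$ (convergence being guaranteed because $\rho' > \rho$ makes the exponent on $2^{-j}$ have a definite sign). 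The $\mathcal{O}(h^\infty)$ errors from Lemma \ref{lemma:bound-composition} and from the chart/cutoff manipulations in \eqref{equation:definition-co} are again harmless and get folded into $h^N\|u\|_{C^{-N}}$, using that $\Op_h(b)$ is bounded $C^{-N} \to C^{-N}$ uniformly in $h$ (e.g. via the same $(-h^2\Delta)^N$ trick as in the proof of Lemma \ref{lemma:bound-composition}) to make sense of the right-hand side when $u$ is merely a distribution.

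\medskip

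The main obstacle I anticipate is purely bookkeeping: carefully tracking the two scales $h$ and $2^{-j}$ through Lemma \ref{lemma:bound-composition} so that the "off-diagonal in $j$" blocks really do produce $\mathcal{O}(h^\infty\langle\xi\rangle^{-\infty})$ errors uniformly, and making sure the cutoffs $\Theta_i'$ and the change of charts in the definition \eqref{equation:definition-co} of $\|\cdot\|_{C^s_*(\M)}$ do not destroy the frequency localization (they do not, by the invariance of symbol classes, but the commutators must be controlled). Once that is in place, the counting of dyadic scales and the elementary inequality $(2^{-j}/h)^{\pm(\rho-\rho')} \le h^{\mp(\rho'-\rho)}$ do all the work, and there is no genuine analytic difficulty beyond what is already contained in Lemmas \ref{lemma:bound-composition} and \ref{lemma:comparaison-norm}.
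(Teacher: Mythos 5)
Your plan — decompose into Littlewood--Paley blocks, use Lemma \ref{lemma:bound-composition} to discard the blocks whose frequency support is disjoint from that of $\Op_h(b)$, and convert $2^{j(\rho'-\rho)}$ into a power of $h$ on the surviving blocks — is exactly the paper's proof (the paper writes out the second inequality and notes the first is similar; since the $C^s_*$ norm is a sup over $j$, no geometric series needs to be summed). One claim in your first paragraph is wrong, however: for $b$ supported in $\{|\xi|>1/2\}$, which is \emph{not} compactly supported in $\xi$, the composition $\Op(\varphi_j)\Op_h(b)$ is \emph{not} $\mathcal{O}(h^\infty)$ when $2^{-j}\ll h$, because $b(\cdot,h\,\cdot)$ is supported in all of $\{|\xi|>1/(2h)\}$ and therefore overlaps every high-frequency annulus. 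So the contributing scales are not a bounded window $|j+\log_2 h|\le C_0$ but the whole half-line $2^{-j}\lesssim h$; only the low-frequency blocks $2^{-j}\gg h$ can be absorbed into $h^N\|u\|_{C^{-N}}$. This does not sink the argument, since the only inequality you actually use on a contributing block is $2^{-j}\le Ch$, hence $2^{-j(\rho'-\rho)}\le C\,h^{\rho'-\rho}$, and that holds for all high frequencies; but as written, the step ``everything outside the window is negligible'' is false and the appeal to ``$2^{-j}/h$ bounded below'' is unnecessary and untrue. (Also, your displayed identity should carry the factor $(2^{-j}/h)^{\rho'-\rho}$, not $(2^{-j}/h)^{\rho-\rho'}$, to be consistent with the bound $\lesssim h^{\rho'-\rho}\|\Op_h(b)u\|_{C^{\rho'}_*}$ you then assert.)
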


In order to prove both lemmata, we will need this first lemma in $\R^n$:

\begin{lemma}\label{lemma:boundedness-phi}
For any compact annulus around $\left\{|\xi|=1\right\}$, there exists a constant $C > 0$ such that for all $a \in S^0(T^*\R^n)$ supported in that annulus, for $h\in(0,1]$: 
\[
\|\Op_h(a)\|_{\mc{L}(L^\infty,L^\infty)} \leq C  \sum_{|\alpha| \leq n +1}  \|\langle \xi \rangle^{|\alpha|} \partial^\alpha_\xi a\|_{L^\infty}.
\]
\end{lemma}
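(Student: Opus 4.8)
The plan is to reduce the $L^\infty$-boundedness of $\Op_h(a)$ to a kernel estimate, exploiting that $a$ is supported in a fixed annulus $\{c^{-1}\le|\xi|\le c\}$ so that $\Op_h(a)$ is, after rescaling, a genuine (non-semiclassical) operator of order $0$ with symbol supported in an annulus. Concretely, I would write the Schwartz kernel of $\Op_h(a)$ as
\[
K_h(x,y) = \frac{1}{(2\pi h)^n}\int_{\R^n} e^{i\langle x-y,\xi\rangle/h} a\!\left(x,\xi\right)\,\dd\xi,
\]
and then estimate $\sup_x \int_{\R^n} |K_h(x,y)|\,\dd y$; by Schur's test this bounds the operator norm on $L^\infty$ (and, since the adjoint has the same structure, on $L^1$, but only the $L^\infty$ bound is needed here). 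Rescaling $\eta = h\xi$, $K_h(x,y) = h^{-n}\widetilde K((x-y)/h,x)$ where $\widetilde K(z,x) = (2\pi)^{-n}\int e^{i\langle z,\eta\rangle} a(x,\eta/h)\,\dd\eta$ — wait, cleaner: keep $\xi$ and integrate by parts in $\xi$ using $e^{i\langle x-y,\xi\rangle/h} = \frac{h^2}{|x-y|^2}(-\Delta_\xi) e^{i\langle x-y,\xi\rangle/h}$ repeated $N$ times, which produces a factor $(h/|x-y|)^{2N}$ against $\Delta_\xi^N a$.

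The key computation: integrating by parts $N$ times (with $2N$ derivatives landing on $a$, all supported in the annulus of volume $O(1)$ in $\xi$) gives, for $|x-y|\ge h$,
\[
|K_h(x,y)| \le C_N\, h^{-n}\Big(\frac{h}{|x-y|}\Big)^{2N}\sup_{|\alpha|\le 2N}\|\partial_\xi^\alpha a\|_{L^\infty}.
\]
Since $a$ is supported where $|\xi|\asymp 1$, we have $\langle\xi\rangle^{|\alpha|}\asymp 1$ on the support, so $\|\partial_\xi^\alpha a\|_{L^\infty}\le C\|\langle\xi\rangle^{|\alpha|}\partial_\xi^\alpha a\|_{L^\infty}$, matching the right-hand side of the lemma. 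For $|x-y|\le h$ I would use the trivial bound $|K_h(x,y)|\le C h^{-n}\|a\|_{L^\infty}$. Splitting the $y$-integral into $\{|x-y|\le h\}$ and $\{|x-y|> h\}$: the first region contributes $\le C h^{-n}\cdot h^n\|a\|_{L^\infty} = C\|a\|_{L^\infty}$; the second, choosing a single value $N$ with $2N > n$ (e.g.\ $2N = n+1$ if $n$ is odd, or split the borderline case — better, just take $2N$ to be the smallest even integer $> n$, which is $\le n+1$... actually to be safe with the stated index range I would simply use all $|\alpha|\le n+1$ and pick $N$ accordingly), contributes
\[
C_N h^{-n} h^{2N}\int_{|x-y|>h}\frac{\dd y}{|x-y|^{2N}} = C_N h^{-n}h^{2N}\cdot h^{n-2N} = C_N,
\]
times $\sum_{|\alpha|\le 2N}\|\langle\xi\rangle^{|\alpha|}\partial_\xi^\alpha a\|_{L^\infty}$. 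Summing the two regions and applying Schur's test gives the claim with the constant uniform in $h\in(0,1]$ and depending only on the annulus.

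The only genuinely delicate point — really a bookkeeping matter rather than a true obstacle — is making the integration-by-parts argument clean when $a(x,\xi)$ depends on $x$: the $x$-derivatives are never needed (they would only enter in composition formulas, not in a direct kernel bound), so this causes no loss, but one must be a little careful that the constant $C_N$ absorbs $x$-independent bounds on $a$ uniformly. A second minor point is the borderline exponent: if $n$ is even, $2N = n$ does not make the tail integral converge, so one takes $2N = n+2$; if $n$ is odd one can take $2N = n+1$. Either way $2N \le n+2$, which is slightly more than the $n+1$ in the statement — I would reconcile this either by using the sharper fractional integration-by-parts (one can gain with a single non-integer power via a Besov/interpolation trick) or, more simply, by noting that on the annulus $\|\partial_\xi^\alpha a\|_{L^\infty}$ for $|\alpha|=n+2$ is controlled via Bernstein's inequality on the annulus by lower-order norms up to $|\alpha|\le n+1$; in practice for the applications in this paper the precise index is immaterial and I would just carry $\sum_{|\alpha|\le n+1}$ through, adjusting the annulus slightly if needed. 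Apart from that, the proof is a standard Schur-test-plus-nonstationary-phase argument and presents no conceptual difficulty.
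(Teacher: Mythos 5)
Your proposal is correct and follows essentially the same route as the paper: a Schur-type bound on $\sup_x\int|K_h(x,y)|\,\dd y$, splitting at $|x-y|=h$, with a volume bound on the near region and non-stationary-phase integration by parts in $\xi$ (gaining $(h/|x-y|)^{2N}$ with $2N>n$) on the far region. The small index quibble you raise for even $n$ ($2N$ possibly needing to be $n+2$ rather than $n+1$) is present in the paper's own proof as well and is immaterial for the applications.
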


In particular, this lemma applies to products of the form $\Op_h(a)\Op(\varphi_j)$, or $\Op(a) \Op(\varphi_j)$, for $a\in S^0$, since those are microsupported in annuli, and $2^{-j}$-semi-classical according to Lemma \ref{lemma:bound-composition}. The proof is based on Schur's test. 

\begin{proof}
We may assume that $a$ is supported in $\{ 1/2 < |\xi|<2 \}$. We start with:
\[
\begin{split}
|\Op_h(a) u(x)| & = \dfrac{1}{(2\pi h)^n} \int_{\R^n_y} \int_{\R^n_\xi} e^{\frac{i}{h}\langle x-y , \xi \rangle} a(x,\xi) u(y) \dd y \dd \xi \\
&  \lesssim h^{-n} \int_{\R^n_z} \left| \int_{\R^n_\xi} e^{\frac{i}{h}\langle z, \xi \rangle} a(x,\xi)   \dd \xi \right| \dd z \|u\|_{L^\infty}  \\
	&\lesssim h^{-n} \|u\|_{L^\infty} \left( \int_{|z| \leq h} + \int_{|z| > h} \right)
\end{split}
\]
For the first integral, this is just a bound on the volume:
\[
\int_{|z| \leq h } \left| \int_{\R^n_\xi} e^{\frac{i}{h}\langle z, \xi \rangle} a(x,\xi) \dd \xi \right| \dd z \lesssim h^n    \|a\|_{L^\infty}.
\]
For the second integral, writing $D_{\xi_i} := i^{-1} \partial_{\xi_i}$ and $|D_\xi|^2 := \sum_{i=1}^n D_{\xi_i}^2$, and using that $h^2|D_\xi|^2(e^{\frac{i}{h} \langle z, \xi \rangle}) = |z|^2 e^{\frac{i}{h} \langle z, \xi \rangle}$, we obtain:
\[
\begin{split}
\int_{|z| > h  }  \left| \int_{\R^n_\xi} e^{\frac{i}{h}\langle z, \xi \rangle} a(x,\xi) \dd \xi \right|  \dd z  &= \int_{|z| > h }  \left| \int_{\R^n_\xi} |z|^{-2N} h^{2N} |D_\xi|^{2N}e^{\frac{i}{h}\langle z, \xi \rangle} a(x,\xi)  \dd \xi \right| \dd z\\
 	&	\lesssim  \int_{|z| > h }  |z|^{-2N} h^{2N} \int_{\R^n_\xi}  \left||D_\xi|^{2N} a(x,\xi)\right| \dd \xi\dd z
 \end{split}
 \]
 Note that:
 \[
 \left||D_\xi|^{2N}a (x,\xi)\right| \leq C_N \sum_{|\alpha| \leq 2N} \langle \xi \rangle^{-|\alpha|} \|\langle \xi \rangle^{|\alpha|} \partial^\alpha_\xi a\|_{L^\infty}  \mathbbm{1}_{\mathrm{supp}(a)}(\xi).
 \]
 Thus:
 \[
\begin{split}
 \int_{|z| > h  }  \left| \int_{\R^n_\xi} e^{\frac{i}{h}\langle z, \xi \rangle} a(x,\xi) \dd \xi \right| \dd z & \leq C_N   \sum_{|\alpha| \leq 2N}  \|\langle \xi \rangle^{|\alpha|} \partial^\alpha_\xi a\|_{L^\infty} \int_{|z| > h  }  |z|^{-2N} h^{2N} \int_{\R^n_\xi}   \mathbbm{1}_{\mathrm{supp}(a)}(\xi)\dd \xi\dd z\\
 & \leq C_N   \sum_{|\alpha| \leq 2N}  \|\langle \xi \rangle^{|\alpha|} \partial^\alpha_\xi a\|_{L^\infty}    \int_{|z| > h}  |z|^{-2N} h^{2N}  \dd z \\
 & \leq C_N   \sum_{|\alpha| \leq 2N}  \|\langle \xi \rangle^{|\alpha|} \partial^\alpha_\xi a\|_{L^\infty}  h^{2N} h^{n-2N} \\
 & \leq C_N   \sum_{|\alpha| \leq 2N}  \|\langle \xi \rangle^{|\alpha|} \partial^\alpha_\xi a\|_{L^\infty}  h^{n},
\end{split}
\]
and this holds as long as $N > n/2$. Note that the constants only depend on $N$ and the support of $a$ in the previous inequalities. This concludes the proof.
\end{proof}

We now turn to Lemma \ref{lemma:comparaison-norm}:

\begin{proof}[Proof of Lemma \ref{lemma:comparaison-norm}]
That this is a norm follows from the equivalence with the $C^s_\ast$ norm. We will need that $h_0>0$ is small enough to absorb $\mathcal{O}(h_0^\infty)$ remainders. Let us pick $u\in C^s_\ast$ and consider 
\[
\Op_h(\Phi) u= \sum_{j \geq 0} \Op_h(\Phi) \Op(\varphi_j) u. 
\]
In this sum, the terms with $2^{j+1}< h^{-1} /(1+\eps)$ are $\mathcal{O}(h^\infty)$ and smooth, according to Lemma \ref{lemma:bound-composition} (applied with parameters $h$ and $2^{-j}$ with $h < 2^{-j}$), which gives that
\[
\sum_{2^{j+1} < h^{-1}/(1+\eps)} \left\|\Op_h(\Phi) \Op(\varphi_j) u\right\|_{L^\infty} \leq C h^N \|u\|_{C^{-N}},
\]
On the other hand, the terms with $2^{j-1} > h^{-1}(1+\eps)$ are $\mathcal{O}( (2^{-j})^\infty )$ and smooth by the same Lemma \ref{lemma:bound-composition} (but applied with the reversed parameters) which gives that 
\[
\|\Op_h(\Phi) \Op(\varphi_j) u\|_{L^\infty} \leq C 2^{-jN} \|u\|_{C^{-N}}.
\]
From this we deduce that
\[
\|\Op_h(\Phi) u\|_{L^\infty} \leq C \sum_{h/(2+2\eps)< 2^{-j} < 2 h(1+\eps)}  \| \Op_h(\Phi)\Op(\varphi_j) u\|_{L^\infty} + C h^N \|u\|_{C^{-N}}.
\]
We can now apply Lemma \ref{lemma:boundedness-phi} to obtain for $h/(2+2\eps)< 2^{-j} < 2 h(1+\eps)$:
\[
\|\Op_h(\Phi)\Op(\varphi_j)u\|_{L^\infty} \leq C\|\Op(\varphi_j)u\|_{L^\infty} \leq C 2^{-js}\|u\|_{C^s_*} \leq C h^{-s}\|u\|_{C^s_*}.
\]
This implies:
\[
\sup_{0<h<h_0} h^{-s}\| \Op_h(\Phi)u \|_{L^\infty} \leq C \|u\|_{C^s_\ast},
\]
with $C$ depending on $\Phi$, and uniform as $h_0 \to 0$. We now consider the low frequencies. With the same arguments as before, we get
\[
\Op_{h_0}(\Psi)u = \sum_{2^{j-1} < 3h_0^{-1}} \Op_{h_0}(\Psi) \Op(\varphi_j) u + h_0^N\|u\|_{C^{-N}}. 
\]
We deduce that
\[
\| \Op_{h_0}(\Psi) u \|_{L^\infty} \leq C \| u\|_{C^s_\ast} \sum_{2^{j-1} < 3h_0^{-1}} 2^{-js} \leq C \| u \|_{C^s_\ast}.
\]
This proves the first inequality of Lemma \ref{lemma:comparaison-norm}.

Let us now consider the converse estimate. We need to control $\Op(\varphi_j)u$ by the $\Op_h(\Phi)u$. For $2^{j-1} > h_0^{-1}$, the same arguments as before will apply, and give a constant $C>0$ uniform in $h_0$ such that 
\[
\|\Op(\varphi_j)u \|_{L^\infty} \leq C \sup_{2^{-1-j}< h < 2^{1-j} } \|\Op_h(\Phi)u\|_{L^\infty} + C 2^{-jN} \|u \|_{C^{-N}}.
\]
It remains to control the terms with $2^{j-1} \leq h_0^{-1}$. For this we decompose $u= \Op_{h_0}(\Psi)u + \Op_{h_0}(1-\Psi)u$. Denoting the terms $u_0$ and $u_1$, we find that $\Op(\varphi_j)u_1$ is $h_0^N\|u\|_{H^{-N}}$ for $2^{j-1}\leq  h_0^{-1}$, so we can concentrate on $u_0$. Now, we can use the boundedness of $\Op(\varphi_j)$ on $L^\infty$ directly to get
\[
2^{js} \| \Op(\varphi_j)u_0 \|_{L^\infty}  \leq C 2^{js} \|u_0\|_{L^\infty}. 
\]
We deduce that
\[
\|u \|_{C^s_\ast} \leq C \|\Op_{h_0}(\Psi)u\|_{L^\infty} \underbrace{\max_{ h_0 2^{j-1}\leq 1} 2^{js}}_{\lesssim h_0^{-s}} + C \sup_{0<h<h_0} h^{-s} \| \Op_h(\Phi) u\|_{L^\infty} + C h_0^N \|u \|_{C^{-N}}.
\]
If $h_0$ is small enough, we can absorb the last term of the right-hand side in the left-hand side. This concludes the proof.
\end{proof}

We now prove Lemma \ref{lemma:comparaison-rho-rho'}:

\begin{proof}[Proof of Lemma \ref{lemma:comparaison-rho-rho'}]
Since the proofs are quite similar, let us concentrate on the case that $b$ is supported in $\left\{|\xi|<2\right\}$. Using Lemma \ref{lemma:bound-composition} (applied with parameters $2^{-j}$ and $h$, with $2^{-j} < h$), we have for $j$ sufficiently large enough (so that $2^{j+1} > 2/h$) that $\|\Op(\varphi_j)\Op_h(b)u\|_{L^\infty} \leq C_N 2^{-jN}\|u\|_{C^{-N}_*}$. Hence:
\[
2^{j \rho'} \|\Op(\varphi_j) \Op_h(b) u \|_{L^\infty} \leq C_N 2^{j(\rho'-N)} \|u\|_{C^{-N}_*} \leq C_N h^{N-\rho'} \|u\|_{C^{-N}_*}.
\]
For the remaining $j$'s (so that $2^{j+1} \leq 2/h$), we have:
\[
2^{j \rho'} \|\Op(\varphi_j) \Op_h(b) u \|_{L^\infty} \leq 2^{j(\rho'-\rho)} \underbrace{2^{j\rho}  \|\Op(\varphi_j) \Op_h(b) u \|_{L^\infty}}_{\lesssim \|u\|_{C^\rho_*}} \leq C h^{\rho-\rho'}  \|u\|_{C^\rho_*}.
\]
This proves the claim.
\end{proof}

It will be convenient to observe that:

\begin{lemma}\label{lemma:control-by-Cs}
Let $b \in S^0(T^*\M)$ be a $0$-th order symbol, supported in $\{|\xi|>1\}$. Then for $s>0$, there is a constant $C>0$ such that for all $h$ small enough, for all $u\in C^s_\ast$:
\[
\| \Op_h(b) u \|_{L^\infty} \leq C h^s \| u \|_{C^s_\ast}.
\]
\end{lemma}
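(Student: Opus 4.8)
\textbf{Proof plan for Lemma \ref{lemma:control-by-Cs}.}

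The plan is to reduce the statement to the estimate of Lemma \ref{lemma:comparaison-rho-rho'} (applied with $\rho = 0$ and $\rho' = s$, say), or equivalently to reprove it directly by a Littlewood--Paley decomposition combined with Lemma \ref{lemma:bound-composition} and Lemma \ref{lemma:boundedness-phi}. First I would write $u = \sum_{j \geq 0} \Op_1(\varphi_j) u$ and hence
\[
\Op_h(b) u = \sum_{j \geq 0} \Op_h(b)\Op_1(\varphi_j) u,
\]
and split the sum according to whether $2^{-j}$ is comparable to $h$ or not. Since $b$ is supported in $\{|\xi|>1\}$, the composition $\Op_h(b)\Op_1(\varphi_j)$ is, by Lemma \ref{lemma:bound-composition} (used with parameters $h$ and $2^{-j}$), an $h$-semiclassical operator when $2^{-j} > h$ and a $2^{-j}$-semiclassical one when $2^{-j} < h$; in the latter ``off-diagonal'' regime, once $2^{j+1} \geq 2/h$, the symbols have disjoint microsupports, so the composition is $\mathcal{O}((2^{-j})^\infty)$-smoothing with estimates controlled by $\|u\|_{C^{-N}_*}$, and after summing the geometric series these terms contribute at most $C h^N \|u\|_{C^{-N}_*} \leq C h^s\|u\|_{C^s_*}$ (using $s > 0$ and, say, $N$ large, together with the continuous inclusion $C^s_* \hookrightarrow C^{-N}_*$).

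For the remaining range, where $2^{-j} \lesssim h$ up to fixed multiplicative constants, I would use that $\Op_h(b)\Op_1(\varphi_j)$ is then bounded on $L^\infty$ uniformly in $h$ and $j$ by Lemma \ref{lemma:boundedness-phi} (each factor being microsupported in a fixed annulus, after rescaling), so that
\[
\|\Op_h(b)\Op_1(\varphi_j) u\|_{L^\infty} \leq C \|\Op_1(\varphi_j) u\|_{L^\infty} \leq C\, 2^{-js}\|u\|_{C^s_*}.
\]
Summing over the finitely many relevant $j$ (those with $2^{-j}$ within a bounded factor of $h$), the dominant term has $2^{-js}$ of size comparable to $h^s$, and the geometric sum is controlled by the largest term, giving $\leq C h^s\|u\|_{C^s_*}$. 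Combining the two regimes yields the claim.

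The only genuinely delicate point is the bookkeeping in the off-diagonal regime: one must check that Lemma \ref{lemma:bound-composition} applies with uniform constants as both $h \to 0$ and $j \to \infty$, and that the disjoint-microsupport clause kicks in at the right threshold so the resulting $\mathcal{O}((2^{-j})^\infty)$ bounds are summable against $\|u\|_{C^{-N}_*}$; this is exactly the type of computation already carried out in the proofs of Lemmata \ref{lemma:comparaison-norm} and \ref{lemma:comparaison-rho-rho'}, so no new ideas are required. Everything else is a routine Littlewood--Paley argument, and one could in fact simply invoke Lemma \ref{lemma:comparaison-rho-rho'} with $\rho=0$, $\rho'=s$ together with the trivial bound $\|\Op_h(b)u\|_{L^\infty} \leq \|\Op_h(b)u\|_{C^0_*}$ and the definition of $\|\cdot\|_{C^0_*}$ at the scale $h$.
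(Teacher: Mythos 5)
Your toolbox is the right one --- this is exactly the argument the paper intends, since its proof of Lemma \ref{lemma:control-by-Cs} simply refers back to the start of the proof of Lemma \ref{lemma:comparaison-norm} --- but as written your bookkeeping of the two regimes is exactly backwards, and this is not cosmetic. Since $b$ is supported in the \emph{unbounded} region $\{|\xi|>1\}$, the rescaled symbol $b(\cdot,h\,\cdot)$ lives at all physical frequencies $|\xi|>1/h$; its microsupport is therefore disjoint from that of $\varphi_j$ precisely when $2^{j+1}<1/h$, i.e.\ in the \emph{low}-frequency regime $2^{-j}>2h$, and it is those terms that are $\mathcal{O}(h^\infty)$-negligible. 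In the high-frequency regime $2^{j+1}\geq 2/h$, which you declare negligible by disjointness, the supports overlap: take $b\equiv 1$ on $\{|\xi|\geq 2\}$, so that $\Op_h(b)\Op(\varphi_j)=\Op(\varphi_j)+\mathcal{O}\bigl((2^{-j})^\infty\bigr)$ once $2^{j-1}h\geq 2$ --- certainly not smoothing. Consequently the non-negligible terms are \emph{all} $j$ with $2^{-j}\lesssim h$, infinitely many of them, not the finitely many ``comparable'' ones; the heart of the proof is that each such term is $\lesssim 2^{-js}\|u\|_{C^s_*}$ by Lemma \ref{lemma:boundedness-phi} (applied to the composition, which is $2^{-j}$-semiclassical and microsupported in an annulus --- note $\Op_h(b)$ itself is not), and the full geometric series $\sum_{2^{-j}\leq 2h}2^{-js}\lesssim h^s$ converges precisely because $s>0$. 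This is where the hypothesis $s>0$ enters. With the two regimes relabelled correctly, your estimates are the right ones and the proof closes.

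The proposed shortcut via Lemma \ref{lemma:comparaison-rho-rho'} with $\rho=0$ also fails: the ``trivial bound'' $\|v\|_{L^\infty}\leq\|v\|_{C^0_*}$ is false --- the continuous embedding goes the other way, $L^\infty\hookrightarrow C^0_*$, and $C^0_*$ strictly contains $L^\infty$. Replacing $\rho=0$ by $\rho=\eps>0$ and using the true embedding $C^\eps_*\hookrightarrow L^\infty$ only yields $h^{s-\eps}$, so the direct Littlewood--Paley argument (correctly organized as above) is genuinely needed to obtain the sharp power $h^s$.
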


\begin{proof}[Proof of Lemma \ref{lemma:control-by-Cs}]
The proof follows the lines of the start of the proof of Lemma \ref{lemma:comparaison-norm}. We leave the details to the reader. 
\end{proof}

We close this section with
\begin{theorem}[Calderon-Vaillancourt Theorem]
\label{theorem:cv}
For each $s\in \R$, there exists a constant $C > 0$ such that for all $a \in S^0(T^*\M)$, for all $h>0$:
\[
\|\Op_h(a)\|_{\mc{L}(C^s_*,C^s_*)} \leq C \sum_{|\alpha|\leq n+1} \|\langle \xi \rangle^\alpha \partial^\alpha_\xi a\|_{L^\infty}.
\]
\end{theorem}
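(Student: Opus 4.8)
The plan is to reduce the Calderón–Vaillancourt estimate on $C^s_\ast(\M)$ to the corresponding statement on $\R^n$ via the cutoff-chart quantization \eqref{equation:quantization}, and then to prove the $\R^n$ statement by a Littlewood–Paley decomposition combined with the $L^\infty\to L^\infty$ bound of Lemma \ref{lemma:boundedness-phi}. By Lemma \ref{lemma:comparaison-norm}, the $C^s_\ast$ norm is equivalent (uniformly in a small fixed $h_0$) to $\|u\|_{s,h_0,\Phi,\Psi}$; but actually it is cleaner to work directly with the defining norm \eqref{equation:definition-co}, i.e. to estimate $2^{js}\|\Op(\varphi_j)\,\Op_h(a)u\|_{L^\infty}$ for each $j$. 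The key algebraic fact is that $\Op(\varphi_j)\,\Op_h(a)$ is, by Lemma \ref{lemma:bound-composition} (applied with parameters $2^{-j}$ and $h$), equal to a $2^{-j}$-semiclassical operator $\Op_{2^{-j}}(c_j)$ modulo $\mc{O}(2^{-jN})$-smoothing, where $c_j$ is a symbol microsupported in the overlap of the annulus $\{|\xi|\sim 2^j\}$ with $\{|\xi|\sim 1/h\}$, with symbol seminorms of $c_j$ controlled by those of $a$ together with $\sum_{|\alpha|\le n+1}\|\langle\xi\rangle^\alpha\partial^\alpha_\xi a\|_{L^\infty}$ (uniformly in $j$ and $h$).

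First I would dispose of the localization: using the partition of unity $\sum\Theta_i=\mathbf 1$ and the cutoffs $\Theta_i'$ from \S\ref{ssection:analysis}, and the fact that $\Op_h(a)$ maps $\Theta_i u$ to something whose relevant part is $\Theta_i' \Op_h(a)\Theta_i u$ plus an $\mc{O}(h^\infty)$-smoothing error (a term which is harmless since smoothing operators are bounded $C^{-N}_\ast\to C^s_\ast$), we may pull everything back by $\kappa_i^{-1}$ and work on $\R^n$ with a compactly supported symbol. Second, on $\R^n$, for a fixed Littlewood–Paley piece $\Op(\varphi_j)$, I would distinguish three regimes for $h$ relative to $2^{-j}$. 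If $2^{-j}$ and $h$ are comparable (say $c\,h\le 2^{-j}\le C\,h$), I write $\Op(\varphi_j)\Op_h(a)=\Op_{2^{-j}}(c_j)+\mc{O}(2^{-jN})$ with $c_j$ supported in a fixed annulus, apply Lemma \ref{lemma:boundedness-phi} to get $\|\Op(\varphi_j)\Op_h(a)u\|_{L^\infty}\le C\|\mathbf 1_{|\xi|\sim 2^j}\text{-projection of }u\|_{L^\infty}$—more precisely, I reinsert a fattened Littlewood–Paley cutoff $\widetilde\varphi_j$ with $\widetilde\varphi_j\varphi_j=\varphi_j$ on the support relevant to $c_j$, so that $\|\Op(\varphi_j)\Op_h(a)u\|_{L^\infty}\le C\|\Op(\widetilde\varphi_j)u\|_{L^\infty}+C_N 2^{-jN}\|u\|_{C^{-N}}$, and $2^{js}$ times this is $\lesssim \|u\|_{C^s_\ast}$. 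If $h\ll 2^{-j}$ (low-frequency output, high-frequency input region), then $\Op(\varphi_j)\Op_h(a)$ is $\mc{O}(2^{-jN})$-smoothing by Lemma \ref{lemma:bound-composition}, so again the contribution is $\lesssim \|u\|_{C^{-N}}\le\|u\|_{C^s_\ast}$. If $h\gg 2^{-j}$, symmetrically it is $\mc{O}(h^N)$-smoothing, with the same conclusion. Summing the suprema over $j$ then gives $\|\Op_h(a)u\|_{C^s_\ast}\le C\sum_{|\alpha|\le n+1}\|\langle\xi\rangle^\alpha\partial^\alpha_\xi a\|_{L^\infty}\|u\|_{C^s_\ast}$ with $C$ independent of $h$.

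The main obstacle is bookkeeping the symbol-seminorm dependence: I must make sure that the composition formula of Lemma \ref{lemma:bound-composition}, together with the change-of-chart errors, produces constants that are controlled by finitely many seminorms of $a$—indeed by the specific combination $\sum_{|\alpha|\le n+1}\|\langle\xi\rangle^\alpha\partial^\alpha_\xi a\|_{L^\infty}$ as in Lemma \ref{lemma:boundedness-phi}—uniformly in $h$ and in $j$, rather than by the full Fréchet seminorm family of $S^0$. This requires tracking that the semiclassical symbol $c_j$ of $\Op(\varphi_j)\Op_h(a)$ inherits, on its (fixed-annulus, $2^{-j}$-scaled) support, bounds depending only on those $x$- and $\xi$-derivatives of $a$ that Lemma \ref{lemma:boundedness-phi} actually uses. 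A clean way to finesse this is: instead of quoting Lemma \ref{lemma:bound-composition} for the exact composition, observe directly that $\Op(\varphi_j)\Op_h(a)u$, when $h\sim 2^{-j}$, can be written using the kernel representation exactly as in the proof of Lemma \ref{lemma:boundedness-phi} (an oscillatory integral against $\varphi_j(\xi)a(x',\xi)$ in the variable $\xi$, frequency $\sim 2^j$), so that the same Schur-test argument applies verbatim with $a$ replaced by $\varphi_j\cdot a$, whose relevant seminorms are bounded by those of $a$ by the Leibniz rule and $\|\langle\xi\rangle^{|\alpha|}\partial^\alpha\varphi_j\|_{L^\infty}\le C_\alpha$ uniformly in $j$. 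This avoids invoking the full asymptotic expansion and makes the seminorm dependence transparent; the remaining regimes $h\ll 2^{-j}$ and $h\gg 2^{-j}$ are handled by the non-stationary-phase (integration by parts) argument exactly as in the proof of Lemma \ref{lemma:bound-composition}. The rest is routine summation over $j$ and the reduction to charts, which I would only sketch.
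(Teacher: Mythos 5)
Your chart reduction and your treatment of the ``comparable'' regime are fine and essentially reproduce the near-diagonal part of the paper's argument (Lemma \ref{lemma:boundedness-phi} applied to the annulus-supported composed symbol). But there is a genuine gap in the other two regimes: your dichotomy compares the \emph{output} frequency $2^j$ with the semiclassical parameter $h$ of $\Op_h(a)$, and you claim that $\Op(\varphi_j)\Op_h(a)$ is smoothing when $h\not\sim 2^{-j}$. This is false for a general $a\in S^0(T^*\M)$: the theorem does not assume $a$ is supported in an annulus in $\xi$, so $\Op_h(a)$ does not localize in frequency and Lemma \ref{lemma:bound-composition} gives no disjointness of microsupports. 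The symbol $a\equiv 1$ is already a counterexample, since $\Op_h(1)=\mathrm{Id}$ and $\Op(\varphi_j)$ is not smoothing; with $h$ fixed, every large $j$ falls into your regime ``$h\gg 2^{-j}$''. Even granting your claimed bound $\mathcal{O}(h^N)\|u\|_{C^{-N}}$ in that regime, it does not beat the weight $2^{js}$ when you take the supremum over $j$ for $s>0$; you would need decay in $j$, not in $h$.

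The fix --- and the paper's actual argument --- is to make the dichotomy between the \emph{input} and \emph{output} frequencies rather than between $j$ and $h$. Decompose $u=\sum_k u_k$ with $u_k=\Op(\varphi_k)u$ and study $\Op(\varphi_j)\Op_h(a)\Op(\varphi_k)$. For $|j-k|>1$ the two outer cutoffs have disjoint supports after rescaling, so Lemma \ref{lemma:bound-composition} (applied with parameters $h$ and $2^{-\max(j,k)}$) gives $\|\Op(\varphi_j)\Op_h(a)\Op(\varphi_k)\|_{\mathcal{L}(L^\infty,L^\infty)}\leq C_N2^{-N\max(j,k)}$ \emph{uniformly in $h$}, which is summable against $2^{js}2^{-ks}$; for $|j-k|\leq 1$ one applies Lemma \ref{lemma:boundedness-phi} exactly as you do in your comparable regime. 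Your closing remark about running the Schur-test kernel argument directly on $\varphi_j\cdot a$ to track the seminorm dependence is a sound way to handle the near-diagonal terms, but it does not rescue the off-diagonal bookkeeping, which must be organized in $(j,k)$.
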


Here, the term on the right-hand side is non-canonical and depends on the choice of cutoff charts to define $\Op_h$, namely 
\[
\|\langle \xi \rangle^\alpha \partial^\alpha_\xi a\|_{L^\infty} = \sup_{i\in \left\{1,...,N\right\}} \|\langle \xi \rangle^\alpha \partial^\alpha_\xi {\kappa_i^{-1}}^*a\|_{L^\infty(T^* \varphi_i(U_i))}.
\]
It is worthwhile to observe that the term on the right-hand side is invariant by scaling by $h$, namely the quantity
\[
\sup_{i\in \left\{1,...,N\right\}} \|\langle \xi \rangle^\alpha \partial^\alpha_\xi {\kappa_i^{-1}}^*a\|_{L^\infty(T^* \varphi_i(U_i))}
\]
is uniformly bounded (with respect to $h > 0$) if one replaces $a$ by $a(\cdot,h\cdot)$.

\begin{proof}
By construction of the semiclassical quantization \eqref{equation:quantization}, this boils down to proving the statement in $\R^n$. The proof is almost the same for different values of $s$, so we deal with the case $s=0$. Let us compute:
\[
\begin{split}
\|\Op_h(a)u\|_{C^0_*} & = \sup_{j \in \N} \|\Op(\varphi_j) \Op_h(a)u\|_{L^\infty} \\
& \leq  \sup_{j \in \N} \sum_{k=0}^{+\infty} \|\Op(\varphi_j) \Op_h(a)\Op(\varphi_k)(u_{k-1}+u_k+u_{k+1})\|_{L^\infty} 
\end{split}
\]
where $u_k := \Op(\varphi_k)u$. We split the sum above in $|k-j| \leq 1$ and $|k-j| > 1$. By Lemma \ref{lemma:boundedness-phi}, we have:
\[
\sum_{|k-j| \leq 1}^{+\infty} \|\Op(\varphi_j) \Op_h(a)\Op(\varphi_k)(u_{k-1}+u_k+u_{k+1})\|_{L^\infty} \leq C \sum_{|\alpha|\leq n+1} \|\langle \xi \rangle^\alpha \partial^\alpha_\xi a\|_{L^\infty} \|u\|_{C^0_*}.
\]
Then, we claim that for all $N \geq 0$, there exists a constant $C_N > 0$ (independent of $j$ and $k$ but depending on $a$) such that for all $|j-k| \geq 1$:
\begin{equation}
\label{equation:decroissance-off-diagonal}
\|\Op(\varphi_j) \Op_h(a)\Op(\varphi_k)\|_{\mc{L}(L^\infty,L^\infty)} \leq C_N 2^{-N\max(j,k)}.
\end{equation}
This follows actually from Lemma \ref{lemma:bound-composition} (applied with parameters $h$ and $2^{-\max(j,k)}$). As a consequence, for any $N > 0$:
\[
\begin{split}
\sum_{|k-j| > 1}^{+\infty} \|\Op(\varphi_j) \Op_h(a)\Op(\varphi_k)(u_{k-1}+u_k+u_{k+1})\|_{L^\infty} & \leq \sum_{|k-j| > 1}^{+\infty} C_N 2^{-N\max(j,k)} \|u\|_{C^0_*} \\
& \leq C_N  \|u\|_{C^0_*}
\end{split}
\]
\end{proof}

\section{Radial estimates}
\label{section:estimates}

This section is devoted to the proof of Theorem \ref{theorem:source}. After some remarks, the proof will be divided into a semi-classical estimate, the core of the proof, and a regularity bootstrap to conclude. 

\subsection{Standard propagation of singularities}

Radial estimates are a refinement of the more elementary \emph{propagation of singularities}. It is customary to present such estimates in the framework of Sobolev spaces. The reasons for this are twofold. First, the scheme of proof proposed by Duistermaat-Hörmander \cite{Duistermaat-Hormander-72}, using a positive commutator and G\r{a}rding estimate can only work on spaces based on $L^2$. The other available scheme of proofs relies on using the propagator. It is more versatile, provided the propagator is bounded. As we recalled before, the only interesting class of operators whose propagators are known to be bounded on spaces more general than $L^2$ are exactly vector fields and potentials. 

Propagation of singularities for flows is actually quite simple. We give a proof as a warmup for the following discussion. If $X$ is a vector field, $(\varphi_t)_{t \in \R}$ the associated flow, we denote by $(\Phi_t)_{t \in \R}$ the symplectic lift of the flow to $T^\ast \M$. Recall that $\Phi_t(\cdot) = (\varphi_t(\cdot), \dd \varphi_t^{-\top}(\cdot))$, where ${}^{-\top}$ stands for the inverse transpose. We let $\E \rightarrow \M$ be a Hermitian vector bundle and $\X$ be a Lie derivative on $\E$ as in \eqref{equation:lie}. The following standard propagation of singularities holds for any Lie derivative $\X$, independently of the hyperbolic nature (or not) of $X$.

\begin{proposition}\label{prop:usual-propagation-Cs}
Let $A,B,D\in \Psi^0(\M,\E)$ with diagonal principal symbol and $T>0$ be such that $D$ is elliptic on $\Phi_T(\WF(A))$ and $B$ is elliptic on $\Phi_t (\WF(A))$ for all $t\in [0,T]$. Then for each $s\in \R$, there exists a constant $C>0$ such that
\[
\| A u \|_{C_*^s} \leq C \left(\| B \X u \|_{C_*^s} + \| D u \|_{C_*^s} + \|u\|_{C_*^{-N}}\right).
\]
\end{proposition}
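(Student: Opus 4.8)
The plan is to establish this propagation estimate via the "propagator" scheme, exploiting that $\X$ is a Lie derivative (and hence its propagator $e^{-t\X}$ acts boundedly on $C^s_\ast(\M,\E)$ for every $s$) — this is the one setting where the method avoids any recourse to the positive-commutator/Gårding argument, as emphasized in the introduction. First I would fix a smooth parametrix/cutoff scheme: by the $C^s_\ast$-boundedness of pseudodifferential operators of order $0$ (Theorem~\ref{theorem:cv}) and the semiclassical decomposition encoded in Lemma~\ref{lemma:comparaison-norm}, it suffices to bound $\|\Op_h(\Phi) A u\|_{L^\infty}$ uniformly in $0 < h < h_0$ by the right-hand side. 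Conjugating $A$ by the flow, write $A = e^{-T\X}\big(e^{T\X} A e^{-T\X}\big)e^{T\X}$; by the (weak, wavefront-set) form of Egorov's theorem — which is all we need and all that survives in the $C^s_\ast$ category — the operator $A_T := e^{T\X}Ae^{-T\X}$ is, modulo smoothing, a pseudodifferential operator of order $0$ with $\WF(A_T) = \Phi_{-T}(\WF(A))$, hence microsupported where $D$ is elliptic.

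The main step is then the Duhamel/fundamental-theorem-of-calculus identity applied to the conjugation by the propagator along $[0,T]$: setting $A(t) := e^{t\X} A e^{-t\X}$ one has
\[
A = A(T) - \int_0^T \frac{d}{dt}\big(e^{t\X}Ae^{-t\X}\big)\,dt = A(T) - \int_0^T e^{t\X}[\X, A]e^{-t\X}\,dt,
\]
where $[\X,A]$ is again a pseudodifferential operator of order $0$ (the commutator with a first-order operator lowers order by one, but after the symbol computation one keeps it order $0$ and extracts the gain from the cutoff placement), microsupported in $\WF(A)$; crucially, $\X A u = A\X u + [\X, A]u$, and $A\X u$ is controlled by $\|B\X u\|_{C^s_\ast}$ since $B$ is elliptic on $\WF(A) \subset \Phi_t(\WF(A))|_{t=0}$, after microlocally inverting $B$ modulo smoothing (elliptic parametrix). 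Applying $e^{-t\X}$, using its uniform boundedness on $C^s_\ast(\M,\E)$ for $t \in [0,T]$, and inserting elliptic parametrices $B^{-1}B$ near $\Phi_t(\WF(A))$ inside the integral, one gets
\[
\|Au\|_{C^s_\ast} \le C\Big(\|D u\|_{C^s_\ast} + \int_0^T \|B\X u\|_{C^s_\ast}\,dt + \|u\|_{C^{-N}_\ast}\Big) \le C'\big(\|Du\|_{C^s_\ast} + \|B\X u\|_{C^s_\ast} + \|u\|_{C^{-N}_\ast}\big),
\]
the smoothing remainders from Egorov and from the parametrices being absorbed into the $\|u\|_{C^{-N}_\ast}$ term (using that any smoothing operator maps $C^{-N}_\ast \to C^s_\ast$ boundedly).

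The technical heart — and the step I expect to be the main obstacle — is making the "weak Egorov" statement fully rigorous in the $C^s_\ast$ setting with all error terms smoothing and with constants uniform in $h$: one must verify that $e^{t\X}\Op_h(a)e^{-t\X}$, for $a \in S^0$ microsupported near $\WF(A)$, equals $\Op_h(\tilde a_t) + \mathcal{O}_{\Psi^{-\infty}}(h^\infty)$ with $\tilde a_t \in S^0$ microsupported in $\Phi_{-t}(\operatorname{microsupp}(a))$, with seminorms controlled uniformly on $[0,T]$. This is where one invokes the flexibility in the introduction's remark about working only with the wavefront-set form of Egorov (the approach suggested by Baladi–Tsujii): one does not need the full symbol expansion, only that conjugation by $e^{t\X}$ preserves the symbol class and transports the microsupport, which follows from the transport equation $\partial_t \tilde a_t = \{p, \tilde a_t\} + (\text{lower order from }\X)$ solved along the Hamiltonian flow of $p(x,\xi) = \langle\xi, X(x)\rangle$ (the Hamiltonian flow being exactly $\Phi_t$), together with Lemma~\ref{lemma:bound-composition} to control the interaction of the various semiclassical scales. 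The remaining points — reducing the $C^s_\ast$ norm to the $\|\cdot\|_{s,h_0,\Phi,\Psi}$ norm via Lemma~\ref{lemma:comparaison-norm}, handling the low-frequency part trivially since all operators involved are order $0$ and $e^{-t\X}$ is bounded on $L^\infty$ pointwise in the fibers, and the elliptic parametrix construction in $\Psi^0(\M,\E)$ — are standard and routine.
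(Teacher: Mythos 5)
Your overall strategy coincides with the paper's (conjugate through the propagator $e^{-t\X}$, use its boundedness on $C^s_\ast$ together with the wavefront-set form of Egorov's theorem, then finish with elliptic parametrices for $B$ and $D$ and Theorem~\ref{theorem:cv}), but the central identity you use is the wrong one, and this is a genuine gap. You differentiate the conjugated \emph{operator}, obtaining
\[
A = A(T) - \int_0^T e^{t\X}[\X,A]e^{-t\X}\,dt .
\]
Applied to $u$, the integrand is a pseudodifferential operator of order $0$, microsupported along the orbit of $\WF(A)$, acting on $u$ itself --- not on $\X u$. Such a term is not controlled by $\|B\X u\|_{C^s_\ast}+\|Du\|_{C^s_\ast}+\|u\|_{C^{-N}_\ast}$: bounding $\|Cu\|_{C^s_\ast}$ for a zeroth-order $C$ microsupported near the orbit of $\WF(A)$ is precisely the content of the proposition, so the argument is circular. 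Your attempted repair via $[\X,A]u=\X Au-A\X u$ does not close the gap: after conjugation the two pieces are $\X A(t)u$ and $A(t)\X u$, and while the second is fine (parametrix with $B$), the first reintroduces $\X$ acting on the unknown; if you then substitute $\X A(t)=\tfrac{d}{dt}A(t)+A(t)\X$ to eliminate it, the identity collapses to the tautology $Au=Au$.

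The fix is one line and is what the paper does: apply Duhamel to $u$ rather than to the operator, i.e.\ integrate $\tfrac{d}{dt}\bigl(e^{-t\X}u\bigr)=-e^{-t\X}\X u$ to get $u=e^{-T\X}u+\int_0^T e^{-t\X}\X u\,dt$, apply $A$, and commute it through the propagator as $Ae^{-t\X}=e^{-t\X}A_t$ with $\WF(A_t)\subset\Phi_t(\WF(A))$. Every term then carries either $\X u$ under the integral (controlled by $B\X u$ after an elliptic parametrix, since $B$ is elliptic on $\Phi_t(\WF(A))$) or $e^{-T\X}u$ hit by $A$ (controlled by $Du$). The remaining ingredients of your plan --- uniform boundedness of $e^{-t\X}$ on $C^s_\ast$, the weak Egorov statement, the parametrices, absorbing smoothing remainders into $\|u\|_{C^{-N}_\ast}$ --- are correct and match the paper; note also that the semiclassical reduction via Lemma~\ref{lemma:comparaison-norm} is not needed here, since Theorem~\ref{theorem:cv} already gives boundedness of the relevant $\Psi^0$ operators on $C^s_\ast$.
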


\begin{proof}
We observe that
\[
A u = \int_0^T A e^{-t\X}dt \X u  + A e^{-T\X}u. 
\]
We can use the Egorov Lemma to observe that $A e^{-t\X}  = e^{-t\X} A_t$, where $A_t\in \Psi^0(\M,\E)$ satisfies
\[
\WF(A_t)\subset \Phi_t (\WF(A)).
\]
Hence, using that $e^{-t\X}$ is bounded on $C^s_\ast$ for $s\in\R$, we get:
\[
\|Au\|_{C^s_*} \leq C_T \left(\int_0^T \|A_t \X u\|_{C^s_*} \dd t + \|A_T u\|_{C^s_*}\right),
\]
for some constant $C_T > 0$. Then, we use an elliptic parametrix to control $A_t$ by $B$ (that is $A_t = A'_t B + \mc{O}_{\Psi^{-\infty}}(1)$ for some $A'_t$ of order $0$), and $A_T$ by $D$. Finally, we use the boundedness of $\Psi^0(\M,\E)$ pseudors (by Theorem \ref{theorem:cv}) to conclude.
\end{proof}

\subsection{A semi-classical version of the source estimate}

We turn now to the heart of the article and assume that $X$ is Anosov in the sense of \eqref{equation:anosov}. We have the following semiclassical analogue of Theorem \ref{theorem:source}:

\begin{theorem}
\label{theorem:sc-source}
Let $A_{h_0}$ be a $h_0$-semiclassical pseudodifferential operator of order $0$, microsupported in a small enough neighbourhood of $E^\ast_s \cap \partial T^*\M$. There exists $B_{h_0}$, a $h_0$-semiclassical pseudodifferential operator of order $0$, elliptic on the wavefront set of $A_{h_0}$, such that following holds. For $\rho>\omega(\X)$, and $N>0$, there exist $C>0$ such that for $u\in C^\infty(\M,\E)$, and $h_0$ small enough:
\begin{equation}
\label{equation:sc-source}
\| A_{h_0} u\|_{C^\rho_\ast} \leq C \| B_{h_0} \X u \|_{C^\rho_\ast} + Ch_0^N \| u \|_{C_*^{-N}}.
\end{equation}
Moreover, if $u$ is a distribution such that $A_{h_0} u \in C^\rho_\ast$, and $B_{h_0} \X u \in C^\rho_\ast$, then the previous equality still holds.
\end{theorem}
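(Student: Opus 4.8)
The plan is to prove the a priori estimate \eqref{equation:sc-source} for $u\in C^\infty(\M,\E)$ with a constant independent of $u$, and then to get the distributional statement either by mollification or, more directly, by noting that every step below is valid for $u\in\mathcal D'(\M,\E)$ once $A_{h_0}u$ and $B_{h_0}\X u$ are assumed to lie in $C^\rho_\ast$ — the absorption at the end only requires $\|A_{h_0}u\|_{C^\rho_\ast}<\infty$. Fix a large time $T>0$, to be chosen at the very end as a function of $\rho$. Using the Egorov lemma exactly as in the proof of Proposition \ref{prop:usual-propagation-Cs}, one has $A_{h_0}e^{-t\X}=e^{-t\X}(A_{h_0})_t$ with $(A_{h_0})_t\in\Psi^0_{h_0}$ and $\WF((A_{h_0})_t)\subseteq\Phi_t(\WF(A_{h_0}))$. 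The crucial structural point is that, because $E^\ast_s\cap\partial T^\ast\M$ is a \emph{source}, the microsupports $\WF((A_{h_0})_t)$ for $t\in[0,T]$ all stay inside a prescribed small conic neighbourhood $V$ of $E^\ast_s$ as soon as $A_{h_0}$ is microsupported in a small enough such neighbourhood (this holds for \emph{any} fixed $T$, only $\WF(A_{h_0})$ having to be taken small depending on $T$), while the corresponding frequency scale is enlarged, relative to $1/h_0$, by the factor $\sim\|\dd_x\varphi_{-t}|_{E^u}\|^{-1}$ dual to the contraction of $E^u$ under $\dd\varphi_{-t}$. We take $B_{h_0}\in\Psi^0_{h_0}$ elliptic on $V$ (in particular on $\WF(A_{h_0})$).

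First I would apply $A_{h_0}$ to the Duhamel identity $u=e^{-T\X}u+\int_0^T e^{-t\X}\X u\,\dd t$, obtaining
\[
A_{h_0}u=e^{-T\X}(A_{h_0})_T u-\int_0^T e^{-t\X}(A_{h_0})_t\,\X u\,\dd t .
\]
The integral term is the easy half: writing $(A_{h_0})_t=(A_{h_0})_tB_{h_0}+R_t$ with $R_t=\mathcal O_{\Psi^{-\infty}_{h_0}}(h_0^\infty)$ uniformly in $t\in[0,T]$ (possible since $B_{h_0}$ is elliptic on $V\supseteq\WF((A_{h_0})_t)$), using that the transport group $e^{-t\X}$ is \emph{bounded} on $C^\rho_\ast$ — this is precisely why working in Hölder--Zygmund rather than Sobolev spaces is harmless, the propagator of a first-order operator being bounded on the $C^s_\ast$ scale — together with the $C^\rho_\ast$-boundedness of the order-zero operators $(A_{h_0})_t$ (Calderón--Vaillancourt, Theorem \ref{theorem:cv}), one gets
\[
\Big\|\int_0^T e^{-t\X}(A_{h_0})_t\,\X u\,\dd t\Big\|_{C^\rho_\ast}\le C_T\,\|B_{h_0}\X u\|_{C^\rho_\ast}+C_T\,h_0^N\|u\|_{C^{-N}_\ast},
\]
the remainder coming from the smoothing operators $R_t$ after trading the order of $\X$ against $N$ (and using Lemma \ref{lemma:bound-composition}).

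The heart of the matter — and the step I expect to be the main obstacle — is the boundary term $e^{-T\X}(A_{h_0})_T u=A_{h_0}e^{-T\X}u$, where one must beat the $T$-growth of $\|e^{-T\X}\|_{C^\rho_\ast\to C^\rho_\ast}$ by exploiting that $(A_{h_0})_T$ is microlocalised close to $E^\ast_s$ at a frequency scale enlarged by the expanding factor $\sim\|\dd_x\varphi_{-T}|_{E^u}\|^{-1}$. Concretely, I would estimate $\|A_{h_0}e^{-T\X}u\|_{C^\rho_\ast}$ via the semiclassical norms of Lemma \ref{lemma:comparaison-norm}, i.e. scale by scale: for each $h\le h_0$ one conjugates $\Op_h(\Phi)$ past $e^{-T\X}$ (Egorov again), then uses Lemmas \ref{lemma:bound-composition}, \ref{lemma:boundedness-phi}, \ref{lemma:comparaison-rho-rho'}, \ref{lemma:control-by-Cs} to track the frequency rescaling, the ellipticity of $A_{h_0}$ on the relevant microsupports (to convert back to $\|A_{h_0}u\|_{C^\rho_\ast}$ through an elliptic parametrix, modulo smoothing), and the pointwise bound $\|e^{-T\X}v\|_{L^\infty}\le(\sup_x M(T,x))\,\|v\|_{L^\infty}$. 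Carried out carefully this yields
\[
\|A_{h_0}e^{-T\X}u\|_{C^\rho_\ast}\ \le\ C\,\Big(\sup_{x\in\M}M(T,x)\,\|\dd_x\varphi_{-T}|_{E^u}\|^{\rho}\Big)\,\|A_{h_0}u\|_{C^\rho_\ast}\ +\ C\,h_0^N\|u\|_{C^{-N}_\ast}.
\]
By the very definition \eqref{equation:threshold} of $\omega(\X)=\omega_+(\X)$, the hypothesis $\rho>\omega(\X)$ means $\limsup_{T\to\infty}\tfrac1T\log\big[M(T,x)\,\|\dd_x\varphi_{-T}|_{E^u}\|^{\rho}\big]<0$; a subadditivity/compactness argument (the cocycle being continuous in $x$ on the compact $\M$) makes the choice of $T$ uniform in $x$, so one fixes $T$ so large that $\theta:=C\sup_x M(T,x)\|\dd_x\varphi_{-T}|_{E^u}\|^{\rho}<1$. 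Combining with the bound for the integral term and absorbing $\theta\|A_{h_0}u\|_{C^\rho_\ast}$ into the left-hand side gives \eqref{equation:sc-source} with $C$ replaced by $C_T/(1-\theta)$; the distributional statement follows identically, the only extra ingredient being the assumed finiteness of $\|A_{h_0}u\|_{C^\rho_\ast}$. The real difficulty — and the novelty over the Sobolev-based arguments of Dyatlov--Zworski — is exactly this last bookkeeping: producing the \emph{sharp} factor $M(T,x)\|\dd_x\varphi_{-T}|_{E^u}\|^{\rho}$, rather than a worse quantity involving a Jacobian, which is what the Hölder--Zygmund norms and the auxiliary lemmas of \S\ref{section:pseudo} are tailored for (compare Appendix \ref{appendix:b}).
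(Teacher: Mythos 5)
Your architecture is the same as the paper's: Duhamel's identity, Egorov's theorem to keep the propagated microsupports in a small conic neighbourhood of $E^\ast_s$, boundedness of $e^{-t\X}$ on $C^\rho_\ast$ for the integral term, and absorption of the boundary term $A_{h_0}e^{-T\X}u$ for $T$ large; you also correctly isolate the key estimate and flag it as the main difficulty. But that estimate \emph{is} the proof, and you assert it (``carried out carefully this yields'') rather than establish it --- and the mechanism you sketch would not deliver it. The sharp factor is $\sup_{x\in\M}\bigl[M(T,x)\,\|\dd_x\varphi_{-T}|_{E^u}\|^{\rho}\bigr]$, a supremum of a \emph{product}, whereas the tools you invoke --- the operator-norm bound $\|e^{-T\X}v\|_{L^\infty}\le(\sup_x M(T,x))\|v\|_{L^\infty}$ combined with a scale-by-scale frequency count --- naturally produce the product of the two suprema, $(\sup_x M(T,x))\cdot(\sup_x\|\dd_x\varphi_{-T}|_{E^u}\|^{\rho})$. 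That quantity is larger in general (the propagator norm and the contraction rate need not be extremized at the same points), so this route proves the theorem only for a strictly worse threshold than $\omega_+(\X)$, which defeats the purpose of the statement.

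What is missing is a genuinely \emph{pointwise} version of the frequency-localization gain. The paper's device is the operator $P^T_h=\Op_h(p_T)$ with the $x$-dependent cutoff $p_T(x,\xi)=a(x,\xi)\chi(2|\xi|\Lambda_{\mc{C}_0}(x,t))$, where $\Lambda_{\mc{C}_0}(x,t)^{-1}$ measures the expansion of covectors in the cone $\mc{C}_0$ under $\dd\varphi_T^{\top}$: its $\xi$-seminorms are uniform in $T$ by homogeneity (only the $x$-derivatives blow up, which is harmless for $L^\infty$ bounds via Lemma \ref{lemma:boundedness-phi}), it is microlocally the identity on $\WF_h(e^{T\X}A'_he^{-T\X})$, and a pointwise Littlewood--Paley count gives $|P^T_hu(x)|\lesssim (h\Lambda_{\mc{C}_0}(x,t))^{\rho}\|u\|_{C^\rho_\ast}$ because only the dyadic blocks with $2^{j}\gtrsim (h\Lambda_{\mc{C}_0}(x,t))^{-1}$ contribute \emph{at that point} (Lemma \ref{lemma:exponential-decay-estimate}; this is also where $\rho>0$ enters). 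Composing with the pointwise propagator then yields $M(T,x)\,\Lambda_{\mc{C}_0}(\varphi_{-T}(x),T)^{\rho}$ at each $x$ \emph{before} the supremum is taken. Two further ingredients you only gesture at are also needed to close the argument: the two-sided comparison $\Lambda_{\mc{C}_0}(\varphi_{-T}(x),T)\asymp\|\dd_x\varphi_{-T}|_{E^u}\|$, which holds only because $\mc{C}_0$ is a small cone around $E^\ast_s$ (the stable component dominates under $\dd\varphi_T^{\top}$), and the subadditive Lemma \ref{lemma:sub} to exchange $\sup_x$ and $\lim_T$, without which $\rho>\omega_+(\X)$ does not give a single finite $T$ making the absorption constant smaller than $1$ uniformly in $x$.
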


The fact that \eqref{equation:sc-source} extends to distributions is a straightforward consequence of the density of $C^\infty$ in distributions. The statement of Theorem \ref{theorem:source} is stronger as it shows that if the right-hand side exists, then so does the left-hand side. We will not prove this ``bootstrap" statement in the semiclassical setting but only in the classical setting, and we leave it as an exercise for the reader. This semiclassical version of the source estimate will eventually allow us to prove the full classical statement in the introduction, namely Theorem \ref{theorem:source}.

Our argument could probably be adapted to deal with the case of a negative threshold. However, this would complicate a bit the proof, and in practical situations we have encountered, the threshold is always non-negative. We also believe that this could be generalized to the case where the bundle $\E$ is infinite-dimensional: this could lead to stronger regularity statements in the Liv\v{s}ic cocycle theory, as one could take Lie groups $G$ with infinite-dimensional Lie algebra such as $\mathrm{Diff}(M)$ for instance.

\begin{proof}[Proof of Theorem \ref{theorem:sc-source}] We prove Theorem \ref{theorem:sc-source} in two steps: 1) for smooth functions with a certain threshold condition; 2) we then relate this threshold condition with the $\omega(\X)$ defined in \eqref{equation:threshold}. Also note the following straightforward reduction: if the estimate holds for elliptic pseudodifferential operators $A_{h_0}$'s, then it surely holds for all $A_{h_0}$'s. We can therefore assume $A_{h_0}$ is elliptic and microlocally equal to the identity in a neighorhood of $E_s^* \cap \partial T^*\M$.\\

\emph{Step 1:} We start by establishing the estimate for $u\in C^\infty(\M,\E)$. The fact that $u$ is a section of a bundle will only appear at a later stage (when estimating $L^\infty$-norms of some pseudodifferential operators). As a consequence, we can forget about the twist by $\E$ for the moment. All the operators are assumed to have diagonal principal symbol.

Up to loosing a $h_0^\infty$-smoothing remainder, we can always assume that $A_{h_0}:=\Op_{h_0}(\tilde{a})$, where 
\[
\tilde{a}(x,\xi) = a(x,\xi) \chi( |\xi|),
\]
where $\chi\in C^\infty(\R)$ is equal to $1$ on $[5,+\infty[$, and vanishes on $]-\infty, 4]$. Using some elliptic estimate, we can also assume that $a(x,\xi)$ is $0$-homogeneous. We denote by $\mathcal{C}_1 \subset \mathcal{C}_0$ the two conic neighbourhoods of $\overline{E^\ast_s}\cap \partial T^\ast \M$ given by $\mathcal{C}_0:=\{ a \neq 0,\ |\xi|>4\}$ and $\mathcal{C}_1:=\{ a = 1,|\xi|>10\}$. 

For the pseudor $B$, we proceed as follows. We pick $b'$ equal to $1$ on 
\[
\mathcal{C}'_0 := \cup_{t>0} \Phi_{-t}(\mathcal{C}_0),
\]
and supported in a conical neighbourhood thereof. (Here $\Phi_t := (\varphi_t, \dd \varphi_t^{-\top}(\cdot))$ is the symplectic lift of $\varphi_t$.) Next, we choose $b$ also with conical support, and equal to $1$ on the support of $b'$. Then, we define $B_{h_0}= \Op_{h_0}(b)$, and $B'_h= \Op_h(b')$. (Since $E^\ast_s$ is a source, this is a small conical neighbourhood of $\overline{E^\ast_s}\cap \partial T^\ast \M$). 

Next, with $\Phi$ being the function that appeared in the dyadic decomposition, we observe that for $0<h<h_0$,
\begin{equation}
\label{equation:simplification}
\Op_h(\Phi) A_{h_0} = \underbrace{\Op_h( a' )}_{:=A'_h} + \mc{O}_{\Psi^{-\infty}_h}(h^\infty),
\end{equation}
with $a'$ supported in $\mathcal{C}_0\cap \{ 1/2 < |\xi|<2\}$, by Lemma \ref{lemma:bound-composition}. We denote $A'_h := \Op_h(a')$. As $A_{h_0}$ has no wavefront set for $\xi$ near $0$, we know by Lemma \ref{lemma:comparaison-norm} that there exists $C > 0$ (independent of $h_0$) such that:
\begin{equation*}
\| A_{h_0} u \|_{C^\rho_\ast} \leq  C \left(h_0^{-\rho} \|\Op_{h_0}(\Psi)A_{h_0} u \|_{L^\infty} +  \sup_{0<h<h_0} h^{-\rho} \| \Op_h(\Phi) A_{h_0} u \|_{L^\infty}\right) 
\end{equation*}
We use the fact that $\Op_{h_0}(\Psi)$ and $A_{h_0}$ have distinct microsupport (and Lemma \ref{lemma:bound-composition}) to deduce that for any $N>0$, there exists $C>0$ such that
\begin{equation}\label{eq:reducing-to-semi-classical-estimate}
\| A_{h_0} u \|_{C^\rho_\ast} \leq C \left( h_0^N \| u \|_{C^{-N}} + \sup_{0<h<h_0} h^{-\rho} \| \Op_h(\Phi) A_{h_0} u \|_{L^\infty}\right).
\end{equation}
Taking $h_0>0$ small enough and using \eqref{equation:simplification}, we can replace $\Op_h(\Phi) A_{h_0}$ by $A'_h$ in the right-hand side of \eqref{eq:reducing-to-semi-classical-estimate} (the error term is $\mathcal{O}(h_0^\infty)$ and can be absorbed in the $\|u\|_{C^{-N}}$ term). We have reduced the problem to estimating $\| A'_h u \|_{L^\infty}$. 

After this reduction, we consider the formula for $T>0$
\begin{equation}
\label{equation:ipp}
A_h' = A'_he^{-T\X} + \int_0^{T} A'_h e^{-t\X} \X.
\end{equation}
We deal first the ``transport part'' of the right-hand side. For $0 \leq t \leq T$:
\[
A'_h e^{-t \X} = A'_h e^{-t \X} B_h' + \mc{O}_{\Psi^{-\infty}_h}((he^{\lambda t})^\infty).
\]
This is a mere consequence of Egorov's theorem (more precisely: of the propagation of singularities by the operator $e^{-t \X}$), and the choices we have made on the support of $b$.
We insist on the fact that the constant $T > 0$ will be chosen large enough in the end and independent of $h$ (as a consequence, we do not even need to propagate up to Ehrenfest time); then $h$ will be chosen small enough. By $\mc{O}_{\Psi^{-\infty}_h}((he^{\lambda t})^\infty)$, we mean that this operator is smoothing (its kernel $K_h(t)$ is a smooth function on $\M \times \M$) and that for any $N \geq 0$, one has $\|K_h(t)\|_{C^N(\M \times \M)} \leq h^N e^{\lambda N t}$, for all $h > 0$ small enough and $0 \leq t \leq T$. 

Fixing an arbitrary $N \gg 1$, we deduce that for $T>0$, using \eqref{equation:ipp}:
\[
\begin{split}
\left\| A'_h \int_0^T e^{-t \X} \X u ~~\dd t \right\|_{L^\infty} & \lesssim \| B_h'\X u\|_{L^\infty} \int_0^T \| e^{-t \X}\|_{L^\infty \rightarrow L^\infty} dt + (he^{\lambda T})^N \|u\|_{C^{-N}} \\
& \lesssim e^{\lambda T} \| B_h'\X u\|_{L^\infty} + (he^{\lambda T})^N \|u\|_{C^{-N}}.
\end{split}
\]
(Let us also insist on the fact that $\lesssim$ refers to universal constants which are neither dependent on $h$ nor on $T$. Also, $\lambda$ might be different from one line to another.) Since $\rho>0$ and $B'_h$ is microsupported in a region where $B_{h_0}$ is microlocally the identity, we can use Lemma \ref{lemma:bound-composition} to see that
\[
B'_h = B'_h B_{h_0} + \mathcal{O}_{\Psi^{-\infty}_h}(h^\infty),
\]
and then apply Lemma \ref{lemma:control-by-Cs} to $B'_h$, to find
\[
\|B_h' \X u \|_{L^\infty} \lesssim h^{\rho}\| B_{h_0}\X u\|_{C^\rho_\ast} + h^N\|u\|_{C_*^{-N}}.
\]
From the arguments above, and using \eqref{equation:ipp}, we obtain:
\begin{equation}
\label{equation:partiel}
\| A'_h u \|_{L^\infty} \lesssim e^{\lambda T} h^\rho \| B_{h_0}\X u \|_{C^\rho_\ast} + \| A'_h e^{-T\X} u\|_{L^\infty} + (he^{\lambda T})^N\|u\|_{C_*^{-N}}.
\end{equation}
The proof will thus be complete if we find $T>0$ large enough so that 
\begin{equation}
\label{equation:todo}
\sup_{0<h<h_0} h^{-\rho} \| A'_h e^{-T\X} u\|_{L^\infty} \leq \frac{1}{1994} \| A_{h_0} u\|_{C^\rho_\ast} + C_T h_0^N \|u\|_{C_*^{-N}},
\end{equation}
for some constant $C_T > 0$. Indeed, if this is the case, then dividing by $h^\rho$ in \eqref{equation:partiel} and taking the $\sup_{0 < h < h_0}$, we will obtain using \eqref{eq:reducing-to-semi-classical-estimate} that:
\[
\|A_{h_0} u\|_{C^\rho_*} \leq C_T \|B \X u \|_{C^\rho_\ast} + \frac{1}{1994} \| A_{h_0} u\|_{C^\rho_\ast}  + C_T h_0^N \|u\|_{C_*^{-N}},
\]
where $C_T > 0$ might be different from one line to another. Then, the term $\frac{1}{1994} \| A u\|_{C^\rho_\ast}$ can be put in the left-hand side and this completes the proof.

Let us turn to the proof of \eqref{equation:todo}. Since $E^\ast_s$ is a source, there exists $T_0>0$ such that for $T>T_0$, $\overline{\Phi_{-T}(\mathcal{C}_0)} \subset (\mathcal{C}_1)^{\circ}$ and thus in particular:
\[
A'_h e^{-T\X} = A'_h e^{-T \X} A_{h_0} + \mc{O}_{\Psi^{-\infty}_h}((he^{\lambda T})^\infty).
\]
We will need to improve this formula. By Egorov's Lemma, we know that (using the notation ${}^{-\top}$ for the inverse transpose):
\begin{align*}
\WF_h( e^{T\X} A'_h e^{-T\X} ) &\subset \Phi_{-T}(\WF_h(A'_h) )\\
						&\subset \left\{(x,\xi)\ |\ (y,\eta):=(\varphi_T(x), \dd _x\varphi_T^{-\top}\xi) \in \mathcal{C}_0(\varphi_T(x)),\ 1/2 < |\eta| < 2 \right\} \\
						&\subset \left\{(x,\xi)\ |\ (y,\eta):=(\varphi_T(x), \dd _x\varphi_T^{-\top}\xi) \in \mathcal{C}_0(\varphi_T(x)),\ 1/2 < |\eta| \right\} \\
						&\subset \left\{(x,\xi)\in \mathcal{C}_1 ~|~ \ 2|\xi| > \inf_{\eta\in \mathcal{C}_0(\varphi_T(x))} \frac{ |\dd_x\varphi_T^{\top}\eta|}{|\eta|} \right\},
\end{align*}
as long as $T$ is large enough.
We define 
\[
\Lambda_{\mathcal{C}_0}(x,t) :=  \sup_{\eta\in \mathcal{C}_0(\varphi_T(x))} \frac{|\eta|}{ |(\dd_x \varphi_T)^{\top} \eta|}. 
\]
Observe that equivalently
\[
\Lambda^{-1}_{\mathcal{C}_0}(x,t)= \inf_{\eta\in \mathcal{C}_0(\varphi_T(x))} \frac{ |\dd_x\varphi_T^{\top}\eta|}{|\eta|},
\]
and this increases exponentially in time. Let
\[
p_T(x,\xi) := a(x,\xi)\chi( 2|\xi| \Lambda_{\mathcal{C}_0}(x,t)), 
\]
where $\chi \in C^\infty(\R)$ is a cutoff function such that $\chi \equiv 1$ for $|x| > 1$ and $\chi \equiv 0$ for $|x| < 1/2$. By construction, this is equal to $1$ for $(x,\xi) \in T^*\M$ such that $(x,\xi) \in \mathcal{C}_1$ and $2 |\xi|> \Lambda^{-1}_{\mathcal{C}_0}(x,t)$ (and supported in $(x,\xi) \in \mathcal{C}_1$ and $4 |\xi|> \Lambda^{-1}_{\mathcal{C}_0}(x,t)$).
Note that, by construction, $p_T$ is microlocally the identity on the wavefront set of $e^{TX} A'_h e^{-TX}$.

Moreover, from the homogeneous structure of the symbol $p_T$, we observe in any local patch of coordinates $U \subset \R^n$, for all $\alpha\in \N^n$, for all $(x,\xi) \in T^*U$ and $T \geq 0$,
\[
\sup_{(x,\xi) \in T^*U} |\langle\xi\rangle^{|\alpha|} \partial_\xi^\alpha p_T(x,\xi)| < C_\alpha,
\]
where $C_\alpha$ is independent of $T$. (On the other hand, the derivatives in $x$ increase exponentially in size with time). Using Egorov's Lemma again, with $P_h^T := \Op_h(p_T)$, we find that:
\begin{equation}
\label{equation:this-is-life}
A'_h e^{-T \X} = A'_h e^{-T \X} P_h^T A_{h_0} + \mc{O}_{T, \Psi^{-\infty}_h}(h^\infty),
\end{equation}
where the remainder depends on $T$. Let us introduce a bit of notation. We will use crucially that $e^{-T\X}$ acts in a local fashion. More precisely, the propagator is a pointwise (in $x \in \M$) linear map 
\[
e^{-T \X}(x)\in \mathrm{Hom}(\E_{\varphi_{-t}(x)}, \E_x),
\]
and thus we can consider its norm
\[
M(T,x) := \sup_{u \in \E_{\varphi_{-t}(x)}, \|u\|=1} \|e^{-T \X}u\|_{\E_x}.
\] 
We now claim that the following holds:
\begin{lemma}\label{lemma:exponential-decay-estimate}
Let $\rho> 0$, and assume that:
\[
K:=\lim_{T\to + \infty} \frac{1}{T} \sup_{x \in \M} \log \left( M(T,x) \times \Lambda_{\mathcal{C}_0}(\phi_{-T}(x),T)^\rho\right) < 0.
\]
Then for any $\eps>0$, there exists $T_0\geq0$ such that for all $T> T_0$, 
\[
\| e^{-T\X} P_h^T u \|_{L^\infty} < \eps h^\rho\| u\|_{C^\rho_*} 
\]
\end{lemma}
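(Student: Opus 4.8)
The plan is to estimate $\| e^{-T\X} P_h^T u \|_{L^\infty}$ pointwise, exploiting that $e^{-T\X}$ acts fiberwise while $P_h^T$ is microlocalized in the conical region $\mathcal{C}_1 \cap \{4|\xi| > \Lambda_{\mathcal{C}_0}^{-1}(x,T)\}$. First I would reduce to a scalar $L^\infty$ bound on $P_h^T u$: since $e^{-T\X}(x)\in \mathrm{Hom}(\E_{\varphi_{-T}(x)},\E_x)$ has operator norm $M(T,x)$, one gets $\| e^{-T\X} P_h^T u\|_{L^\infty} \leq \sup_x M(T,x) \cdot \| P_h^T u \|_{L^\infty}$ — but this is too lossy because it forgets that $P_h^T u$ is supported at frequencies $\gtrsim \Lambda_{\mathcal{C}_0}^{-1}$. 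The correct move is to exploit the frequency cutoff in $p_T$: writing $P_h^T = P_h^T \Op_h(b_T)$ where $b_T$ is a symbol supported in $\{2|\xi| > \tfrac12 \Lambda_{\mathcal{C}_0}^{-1}(x,T)\}$, one can insert an extra rescaled dyadic decomposition. Concretely, because $p_T$ vanishes for $|\xi| \lesssim \Lambda_{\mathcal{C}_0}^{-1}(x,T)$, Lemma \ref{lemma:control-by-Cs} (applied after peeling off the relevant frequency band, possibly with a rescaled parameter $h/\Lambda_{\mathcal{C}_0}^{-1}$ or a sum over dyadic blocks above the threshold) yields $\| P_h^T u \|_{L^\infty} \lesssim (h \Lambda_{\mathcal{C}_0}(x,T))^\rho \| u\|_{C^\rho_*}$, with constants uniform in $T$ thanks to the uniform symbol bounds $|\langle\xi\rangle^{|\alpha|}\partial_\xi^\alpha p_T| \leq C_\alpha$ noted just before the lemma.

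Next I would combine the two pointwise ingredients. At a point $x$, the relevant scale in the support of $e^{T\X}A'_h e^{-T\X}$, hence in $P_h^T$ after pushing forward, is governed by $\Lambda_{\mathcal{C}_0}(\varphi_{-T}(x),T)$; so I would track the composition $e^{-T\X} P_h^T$ carefully and arrive at a bound of the shape
\[
\| e^{-T\X} P_h^T u \|_{L^\infty} \leq C \sup_{x \in \M} \Big( M(T,x) \cdot \big(h\,\Lambda_{\mathcal{C}_0}(\varphi_{-T}(x),T)\big)^\rho \Big) \|u\|_{C^\rho_*} = C h^\rho \sup_{x} \Big( M(T,x) \Lambda_{\mathcal{C}_0}(\varphi_{-T}(x),T)^\rho\Big)\|u\|_{C^\rho_*},
\]
with $C$ uniform in $T$ and $h$. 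Then the hypothesis $K<0$ is exactly what kills the supremum: by definition of $K$ as a limit, for $\eps>0$ choose $T_0$ so that for $T>T_0$, $\sup_x \log\big(M(T,x)\Lambda_{\mathcal{C}_0}(\varphi_{-T}(x),T)^\rho\big) \leq (K/2) T$, say, so that $\sup_x M(T,x)\Lambda_{\mathcal{C}_0}(\varphi_{-T}(x),T)^\rho \leq e^{KT/2} \to 0$; enlarging $T_0$ further makes $C e^{KT/2} < \eps$, which is the claim.

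The main obstacle is the first step: making rigorous the gain $\| P_h^T u\|_{L^\infty} \lesssim (h\Lambda_{\mathcal{C}_0})^\rho \|u\|_{C^\rho_*}$ with a constant \emph{uniform in $T$}, despite the fact that the $x$-derivatives of $p_T$ blow up exponentially in $T$. One has to check that the Calderón–Vaillancourt-type and $L^\infty$-boundedness estimates (Theorem \ref{theorem:cv}, Lemmas \ref{lemma:boundedness-phi} and \ref{lemma:control-by-Cs}) are controlled only by $\xi$-derivatives of the symbol — which are uniform — after an appropriate rescaling in $\xi$ that turns the $T$-dependent frequency threshold $\Lambda_{\mathcal{C}_0}^{-1}(x,T)$ into a fixed scale; the $x$-dependence of $\Lambda_{\mathcal{C}_0}$ and the resulting loss must be absorbed either into the fiberwise norm $M(T,x)$ or shown to be subexponential. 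A secondary technical point is bookkeeping the $h$ versus $\Lambda_{\mathcal{C}_0}$ scales so that Lemma \ref{lemma:bound-composition} applies with the composite parameter and the $\mathcal{O}(h^\infty)$ remainders (which do depend on $T$, as flagged in \eqref{equation:this-is-life}) are genuinely negligible once $h$ is taken small after $T$ is fixed.
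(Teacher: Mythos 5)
Your proposal is correct and follows essentially the same route as the paper: a pointwise estimate $|P_h^T u(y)| \lesssim (h\,\Lambda_{\mathcal{C}_0}(y,T))^\rho \|u\|_{C^\rho_*}$ obtained by summing the Littlewood--Paley blocks above the $x$-dependent frequency threshold in the support of $p_T$ (with uniformity in $T$ coming precisely from the fact that the left-quantization $L^\infty$ bound of Lemma \ref{lemma:boundedness-phi} only sees $\xi$-derivatives, which are $T$-independent by $0$-homogeneity), composed pointwise with the fiberwise norm $M(T,x)$ of $e^{-T\X}$, and then killed by the hypothesis $K<0$ for $T$ large. The obstacles you flag are exactly the ones the paper resolves, and in the way you indicate.
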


Let us assume that Lemma \ref{lemma:exponential-decay-estimate} holds. Then using \eqref{equation:this-is-life} in the first line, and in the second line the fact that $A'_h : L^\infty \rightarrow L^\infty$ is bounded independently of $h > 0$, according to Lemma \ref{lemma:boundedness-phi} and \eqref{equation:simplification}, we get:
\[
\begin{split}
\|A'_h e^{-T \X} u\|_{L^\infty} &\leq \|A'_h e^{-T \X} P_h^T A_{h_0} u\|_{L^\infty} + h^NC_T \|u\|_{C^{-N}_*} \\
& \leq C \|e^{-T \X} P_h^T A_{h_0} u\|_{L^\infty} + h^N C_T \|u\|_{C^{-N}_*}  \\
& \leq C \eps h^\rho \|A_{h_0}u\|_{C^\rho_*} + h^N C_T \|u\|_{C^{-N}_*}
\end{split}
\]
Taking $\eps = 1/(C\times1994)$ and $T$ larger than the corresponding $T_0 \geq 0$, we obtain the desired estimate \eqref{equation:todo}.

\begin{proof}[Proof of Lemma \ref{lemma:exponential-decay-estimate}]
Since the flow acts pointwise, we will concentrate on finding pointwise estimates for $P^T_h u (x)$. By construction, $P^T_h = \Op_h(p_T)$ and the quantization in defined in \eqref{equation:quantization}. Hence, in order to estimate $|P^T_h u(x)|$ up to a fixed multiplicative constant, it suffices to find pointwise estimates on
\[
P_U^T u := \Op_h( \kappa_* p_T) \left( \kappa_*(\Theta u)\right),
\]
where $U$ is one of the $U_i$'s in \eqref{equation:quantization} (and $\Theta = \Theta_i, \kappa=\kappa_i$). Since we are working with the left quantization, this is particularly simple. Indeed, for $f$ supported in the interior of $U' := \kappa(U)$
\[
P_U^T f(x) = \int_{\R^n} e^{i \langle x, \xi\rangle} \left[\kappa_\ast p_T \right](x,h\xi)\hat{ f }(\xi) \dd \xi.
\]
By construction, $(\kappa)_\ast p_T$ is only supported for $|\xi| > 1/(4\Lambda_{\mathcal{C}_0}(x,t))$, so that if $f = \sum_{j\geq 0} f_j$ is the Littlewood-Paley decomposition of \S\ref{ssection:hz-spaces}, where each $f_j := \Op(\varphi_j)f$ has its Fourier transform supported in $\left\{2^{j-1} \leq |\xi| \leq 2^{j+1}\right\}$, we have:
\[
P_U^T f = \sum_{j + 1\geq  |\log 4 h \Lambda_{\mathcal{C}}(x,t)| / \log 2 } P^T_U f_j.
\]
However, by Lemma \ref{lemma:boundedness-phi}
\[
| P_U^T f_j(x) |  \lesssim \left(\sup_{|\alpha| \leq n+1 } \sup_{(x,\xi) \in T^*U'} | \langle\xi\rangle^{|\alpha|}\partial_\xi^\alpha p_T(x, \xi)|\right) (\| f_{j-1} \|_{L^\infty} + \| f_j \|_{L^\infty} + \| f_{j+1} \|_{L^\infty}).
\]
Since we have chosen $p_T$ homogeneous of degree $0$, these derivatives are actually controlled independently of $T$, namely we obtain for some constant $C >0$ independent of $T$:
\[
|P_U^T f(x) | \lesssim \sum_{j + 2\geq  |\log 4 h \Lambda_{\mathcal{C}}(x,t)| / \log 2 } \|f_j\|_{L^\infty}.
\]
By definition of the $C^\rho_\ast$ norm (see \eqref{equation:definition-co}), it follows that :
\[
|P_U^T f(x) | \lesssim \left| h \Lambda_{\mathcal{C}_0}(x,t) \right|^{\rho} \left(\sum_{j \geq 0} 2^{-j\rho}\right) \|f\|_{C^\rho_\ast} \lesssim h^\rho |\Lambda_{\mathcal{C}_0}(x,t)|^{\rho} \|f\|_{C^\rho_\ast}.
\]
(We see that the assumption $\rho>0$ is necessary here.) Gathering our estimates, we find:
\[
\left\| e^{-T \X} P^T_h u \right\|_{L^\infty} \lesssim h^\rho \| u \|_{C^\rho_\ast} \sup_{x \in \M} \left( M(T,x) \times \Lambda_{\mathcal{C}_0}(\varphi_{-T}(x),T)^\rho \right).
\]
It follows from the assumption on $\rho$ that for any $\delta>0$, there exists a constant $C>0$ such that for $T>0$
\[
 \sup_{x \in \M}  M(T,x) \times \Lambda_{\mathcal{C}_0}(\varphi_{-T}(x),T)^\rho  \leq e^{C-(K-\delta) T}.
\]
Taking $T_0>0$ large enough and $T>T_0$ we find that the desired estimate is satisfied. 
\end{proof}

\emph{Step 2:} We now relate this condition with the threshold. First of all, observe that
\begin{equation}
\label{equation:miniclaim}
\frac{1}{C} \| \dd_x \varphi_{-T}|_{E^u} \| \leq |\Lambda_{\mathcal{C}_0}(\varphi_{-T}(x),T)| = \sup_{\xi\in \mathcal{C}_0(x)} \frac{|\xi|}{ |(\dd_{\varphi_{-T}(x)} \varphi_T)^{\top} \xi|} \leq C \| \dd_x \varphi_{-T}|_{E^u} \|.
\end{equation}
Indeed, for $(x,\xi)\in \mathcal{C}_0$, we can decompose $\xi = \xi_s + \xi_{u0}$, with $\xi_s \in E^*_s$, and $\xi_{u0}\in E^\ast_0 \oplus E^\ast_u$. Since $\mathcal{C}_0$ is a small cone around $E^\ast_s$, we have $|\xi_{u0}| < |\xi_s|/2$. Next, we observe that for $T>0$ large enough (but uniformly in $x$), writing:
\[
\dd_{\varphi_{-T}(y)}\varphi_T^{\top}\xi  = \dd_{\varphi_{-T}(y)}\varphi_T^{\top}\xi_s + \dd_{\varphi_{-T}(y)}\varphi_T^{\top}\xi_{u0},
\]
we see that the second term in bounded in norm by:
\[
|\dd_{\varphi_{-T}(x)}\varphi_T^{\top}\xi_{u0}| \leq C |\xi_s| \leq 1/2 \times |\dd_{\varphi_{-T}(x)}\varphi_T^{\top}\xi_s|,
\]
as the last quantity increases exponentially fast. Hence, there exists $C > 0$ such that 
\[
\sup_{\xi\in E_s^*(x)} \frac{|\xi|}{ |(\dd_{\varphi_{-T}(x)} \varphi_T)^{\top} \xi|} \leq \sup_{\xi\in \mathcal{C}_0(x)} \frac{|\xi|}{ |(\dd_{\varphi_{-T}(x)} \varphi_T)^{\top} \xi|}  \leq C \sup_{\xi\in E_s^*(x)} \frac{|\xi|}{ |(\dd_{\varphi_{-T}(x)} \varphi_T)^{\top} \xi|}.
\]
To prove \eqref{equation:miniclaim}, it remains to observe that 
\[
\sup_{\eta \in E_s^*(\varphi_{-T}(x))} \dfrac{|\dd_{\varphi_{-T}(x)} \varphi_T^{-\top} \eta|}{|\eta|} = \sup_{\substack{\eta \in E_s^\ast(\varphi_{-T}(x))\\ |\eta|=1}}  \sup_{\substack{Z\in E^u(x)\\ |Z|=1}} \langle \dd_{\varphi_{-T}(x)} \varphi_T^{-\top}\eta, Z\rangle =  \sup_{Z \in E_u(x)} \dfrac{|\dd_x \varphi_{-T}(Z)|}{|Z|}.
\]

We turn now to the variational interpretations of the quantities above. We introduce:
\[
K' := \sup_{x \in \M} \lim_{T\to + \infty} \frac{1}{T}\log \left(M(T,x) \times \| \dd_x \phi_{-T}|_{E^u} \|^\rho\right) \leq K < 0,
\]
and we want to invert the limit with the sup and show that $K'=K$ actually. Denoting $w(T,x):= \log\left( M(T,x) \| (\dd_x \varphi_{-T})_{|E^u} \|^\rho\right)$, we observe that $w$ is subadditive, in the sense that for all $x \in \M$ and $T_1,T_2 \geq 0$:
\[
w(T_1+T_2, x) \leq  w(T_1,x) + w(T_2, \varphi_{T_1}(x)). 
\]
It therefore satisfies the assumption of Lemma \ref{lemma:sub} and thus $K'=K$. We then set:
\[
\omega(\X) := \inf \left\{\rho > 0 ~|~ \sup_{x \in \M} \lim_{T\to + \infty} \frac{1}{T}\log \left(M(T,x) \times \| \dd_x \phi_{-T}|_{E^u} \|^\rho\right) < 0 \right\},
\] 
so that any $\rho > \omega(\X)$ satisfies $K < 0$ (and thus Lemma \ref{lemma:exponential-decay-estimate} is satisfied). \\

\end{proof}

\subsection{The regularity bootstrap}

We can now complete the proof of Theorem \ref{theorem:source}.

\begin{proof}[Proof of Theorem \ref{theorem:source}]
When $u$ is smooth, Theorem \ref{theorem:source} is a straightforward consequence of Theorem \ref{theorem:sc-source} by taking a fixed value of $h_0$. The only non-trivial statement in Theorem \ref{theorem:source} is therefore the bootstrap statement, asserting that the source estimate still holds for $u$ such that $Au \in C^{\rho_0}_*$ and $B \X u \in C^\rho_*$, with $\omega(\X) < \rho_0 < \rho$.

We let $u_{h} := \Op_{h}(\Psi)u$ where $\Psi \in C^\infty_{\mathrm{comp}}(T^*\M)$ is supported in $\left\{|\xi| \leq 3\right\}$, constant equal to $1$ on $\left\{|\xi| \leq 2\right\}$, and so that $u_{h} \to_{h \to 0} u$ in $C^{\rho_0}_*$ and each $u_{h}$ is smooth. Applying the estimate \eqref{equation:sc-source} of Theorem \ref{theorem:sc-source} with $h_0 \simeq 1$, we have: 
\[
\| A_1 u_{h}\|_{C^\rho_\ast} \leq  C \| B_1 \X u_{h} \|_{C^{\rho}_\ast} + C  \|u_{h}\|_{C^{-N}_*},
\]
where $A_1$ and $B_1$ are $0$-homogeneous and chosen to have smaller support than $A$ and $B$ (and without loss of generality, we assume that $A$ and $B$ are microlocally equal to $\mathbbm{1}$ on the wavefront set of $A_1$ and $B_1$). We write
\[
\X u_{h} = \X \Op_{h}(\Psi) u = \Op_{h}(\Psi) \X u + \underset{\in  \Psi_{h}^0}{\underbrace{ [\X, \Op_{h}(\Psi)]}} u. 
\]

First of all, we claim that under the assumption that $B \X u \in C^\rho_*$, we have 
\[
\|B_1 \Op_{h}(\Psi) \X u\|_{C^\rho_*} \leq C \| B \X u\|_{C^\rho_*} + C \| u \|_{C^{-N}},
\]
for some $C > 0$ independent of $h$. Indeed, this follows from:
\[
B_1 \Op_{h}(\Psi) \X u = B_1 \Op_{h}(\Psi) B \X u +  B_1 \Op_{h}(\Psi) (\mathbbm{1}-B) \X u,
\]
It suffices to observe that $B_1$ and $\mathbbm{1}-B$ have disjoint microsupport, and use the uniform boundedness on $C^\rho_\ast $ of $B_1 \Op_h(\Psi)$ as $h\to 0$ (by Theorem \ref{theorem:cv}).

Now, we observe that
\[
R_{h}:=B_1 [\X, \Op_{h}(\Psi)] \in \Psi^0_{h}(\M)
\]
is $h$-semi-classical, with $\WF_{h}(R_{h}) \subset\{1/2 < |\xi| < 1\}$. According to Lemma \ref{lemma:bound-composition}, $\WF_h(R_h)$ is contained in $\WF(B_1)$, which is itself contained in a small conical neighbourhood of $E^\ast_s$. We can thus apply Theorem \ref{theorem:sc-source} with $h_0 = h$, and $A_h = R_h$. We obtain the existence of $\widetilde{B}_{h}$ (which can be chosen with semiclassical microsupport contained inside the classical wavefront set of $B$) such that:
\[
\|R_{h}u \|_{C^{\rho_0}_\ast} \leq C \| \widetilde{B}_{h} \X u \|_{C^{\rho_0}_\ast} + C h^N \|u\|_{C^{-N}_*},
\] 
As $R_{h}$ is compactly microsupported, we can use the second case of Lemma \ref{lemma:comparaison-rho-rho'} to compare $C^\rho_*$ and $C^{\rho_0}_*$-norms. We deduce that (the integer $N$ might be different from one line to another as some $h^\rho$ might be absorbed in it):
\[
\begin{split}
\|R_{h} u\|_{C^\rho_*} & \lesssim h^{\rho_0-\rho} \|R_{h}u \|_{C^{\rho_0}_*} + h^N\|u\|_{C^{-N}} \lesssim h^{\rho_0-\rho} \| \widetilde{B}_{h} \X u \|_{C^{\rho_0}_\ast} + h^N \|u\|_{C^{-N}_*}.
\end{split}
\]
Now, we observe that applying now the first case of Lemma \ref{lemma:comparaison-rho-rho'} (since $\widetilde{B}_h$ is not microsupported at $0$), we have:
\[
\|\widetilde{B}_{h} \X u \|_{C^{\rho_0}_\ast} \lesssim h^{\rho-\rho_0} \|\widetilde{B}_{h} \X u \|_{C^{\rho}_\ast} + h^N \|u\|_{C^{-N}_*},
\]
and thus: $\|R_{h} u\|_{C^\rho_*} \lesssim \|B \X u \|_{C^\rho_*} + \|u\|_{C^{-N}*}$. Combining all the estimates, we find that there exists a $h$-independent constant $C > 0$ such that:
\begin{equation}
\label{equation:a1}
\|A_1 u_{h} \|_{C^{\rho}_\ast} \leq C \left( \|B  \X u\|_{C^\rho_*} +  \|u\|_{C^{-N}_*}\right).
\end{equation}
Observe that $A_1 u_h \rightarrow A_1 u$ in the sense of distributions. Since $C^\rho_\ast$ satisfies the Fatou property \cite[Proposition 2, page 15]{Runst-Sickel-96}, it implies that $A_1 u \in C^\rho_\ast$ and
\[
\|A_1 u\|_{C^{\rho}_\ast} \leq C \left( \|B  \X u\|_{C^\rho_*} +  \|u\|_{C^{-N}_*}\right),
\]
for some possibly different constant $C > 0$. Finally, in order to obtain an estimate on $Au$ instead of $A_1 u$ (where $A$ has slightly larger wavefront set than $A_1$), it suffices to use standard propagation of singularities.

\end{proof}

\subsection{Regularity of solutions to cohomological equations}
\label{sec:proof-main}

Here we prove the main Theorem \ref{theorem:regularity}.

\begin{proof}[Proof of Theorem \ref{theorem:regularity}]
From the elliptic considerations already mentionned in the introduction, we know that $\WF(u) \subset E^\ast_u \oplus E^\ast_s$. Next, since $\rho > \omega_+(\X)$, we can use Theorem \ref{theorem:source} to deduce that for any $s>0$, and a pseudo-differential operator of order $0$ microsupported near $E^\ast_s$, $Au$ is in $C^s_\ast$. In particular, $A u\in C^\infty$. Choosing $A$ to be elliptic near $E^\ast_s$, we deduce that $E^\ast_s \cap \WF(u) = \emptyset$. The same argument applied with $E^\ast_u$ in reversed time (which is allowed since $\rho > \omega_-(\X)$) implies that $E^\ast_u \cap \WF(u) = \emptyset$. 

Now, we assume that there exists $(x,\xi)\in \WF(u)$. Using Proposition \ref{prop:usual-propagation-Cs}, we deduce that $\{ \Phi_t(x,\xi) \ |\ t\in \R\}\subset \WF(u)$. However, in $E^\ast_u \oplus E^\ast_s$, the dynamics of $\Phi_t$ is quite simple to describe: if $\xi\neq 0$, either $\Phi_t(x,\xi) \to E^\ast_u \cap \partial T^\ast \M$ as $t\to + \infty$, or $\Phi_t(x,\xi) \to E^\ast_s\cap \partial T^\ast \M$ as $t\to -\infty$. Since the wavefront set is closed and does not intersect $E^\ast_s \cup E^\ast_u$, we deduce that there can be no such point $(x,\xi)$. Since $\WF(u) = \emptyset$, $u$ is smooth.
\end{proof}

\section{Regularity in hyperbolic dynamics}

\label{section:cohomological}

\subsection{Livsic theory}

We denote by $\mc{G}$ the set of periodic orbits of the flow. The \emph{X-ray transform} operator $I$ appears in several geometric and dynamical problems: it consists in integrating functions along periodic orbits:
\begin{equation}
\label{equation:xray}
I : C^\alpha(\M) \rightarrow \ell^\infty(\mc{G}), ~~~~ \mc{G} \ni \gamma \mapsto If(\gamma) := \dfrac{1}{\ell(\gamma)} \int_0^{\ell(\gamma)} f(\varphi_t(x)) \dd t,
\end{equation}
where $x \in \gamma$ is arbitrary. The kernel of the X-ray transform on $C^\alpha(\M)$ was first characterized by Liv\v{s}ic \cite{Livsic-72}:

\begin{theoremexp}[\cite{Livsic-72}]
Let $X$ be a transitive Anosov vector field. Let $f \in C^\alpha(\M)$ be a function such that $If = 0$. Then, there exists $u \in C^\alpha(\M)$ such that $f = Xu$. Moreover, $u$ is unique modulo an additive constant.
\end{theoremexp}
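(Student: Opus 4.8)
\emph{Plan of proof.} This is the classical Liv\v{s}ic theorem, and the route I would take is the standard orbit-closing argument; the microlocal machinery of Theorem \ref{theorem:regularity} plays no role in \emph{producing} a solution (it only upgrades its regularity afterwards, once $f$ is smooth). Since $X$ is transitive, fix a point $x_0 \in \M$ with dense forward orbit $\{\varphi_t(x_0) : t \geq 0\}$. The equation forces the candidate primitive: define
\[
u(\varphi_t(x_0)) := \int_0^t f(\varphi_s(x_0)) \, \dd s, \qquad t \geq 0,
\]
which is smooth \emph{along} the dense orbit and satisfies $Xu = f$ there. The entire content is to prove that $t \mapsto u(\varphi_t(x_0))$ is uniformly $\alpha$-H\"older for the distance $d$ on $\M$, i.e.\ that there exist $C, \delta_0 > 0$ such that
\[
| u(\varphi_{t_1}(x_0)) - u(\varphi_{t_2}(x_0)) | \leq C \, d(\varphi_{t_1}(x_0), \varphi_{t_2}(x_0))^\alpha
\]
whenever $d(\varphi_{t_1}(x_0), \varphi_{t_2}(x_0)) < \delta_0$. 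Granting this, $u$ extends uniquely to $u \in C^\alpha(\M)$ by density of the orbit; and passing to the limit along $\varphi_{t_n}(x_0) \to x$ in the identity $u(\varphi_{t+t_n}(x_0)) - u(\varphi_{t_n}(x_0)) = \int_0^t f(\varphi_s(\varphi_{t_n}(x_0)))\,\dd s$, using continuity of $u$ and $f$, gives $u(\varphi_t(x)) - u(x) = \int_0^t f(\varphi_s(x))\,\dd s$ for all $x \in \M$, hence $Xu = f$.

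The H\"older estimate is where the hypothesis $If = 0$ enters, via the \textbf{Anosov closing lemma}. Writing $t_2 = t_1 + T$ with $T \geq 0$ (WLOG, and WLOG $\alpha \le 1$), set $y := \varphi_{t_1}(x_0)$, $\delta := d(y, \varphi_T(y)) < \delta_0$. The closing lemma yields a periodic point $p$ of period $T' = T + \OO(\delta)$ whose orbit shadows $\{\varphi_s(y) : 0 \leq s \leq T\}$, with
\[
d(\varphi_s(y), \varphi_s(p)) \leq C \, \big( e^{-\lambda s} + e^{-\lambda (T-s)} \big)\, \delta, \qquad 0 \leq s \leq T,
\]
the two exponentials coming from the stable behaviour forward from the start and the unstable behaviour backward from the end in \eqref{equation:anosov}. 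Since $\gamma := \{\varphi_s(p)\}$ is a periodic orbit, $If = 0$ gives $\int_0^{T'} f(\varphi_s(p))\,\dd s = 0$, so
\[
u(\varphi_{t_2}(x_0)) - u(\varphi_{t_1}(x_0)) = \int_0^T f(\varphi_s(y))\,\dd s = \int_0^T f(\varphi_s(y))\,\dd s - \int_0^{T'} f(\varphi_s(p))\,\dd s .
\]
The right-hand side is bounded by $\int_0^T | f(\varphi_s(y)) - f(\varphi_s(p))|\,\dd s + \|f\|_{L^\infty}|T - T'|$. Using $f \in C^\alpha$ together with the shadowing bound, the integral is at most $C\|f\|_{C^\alpha}\,\delta^\alpha \int_0^T \big(e^{-\lambda\alpha s} + e^{-\lambda\alpha(T-s)}\big)\,\dd s \leq (2C/\lambda\alpha)\,\|f\|_{C^\alpha}\,\delta^\alpha$, the integral converging \emph{precisely because $\alpha > 0$} and uniformly in $T$; and $|T - T'| = \OO(\delta) = \OO(\delta^\alpha)$ for $\delta$ small. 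This yields the claimed estimate. This step is also the main obstacle: it is the sole place requiring the quantitative hyperbolicity of $X$ — the closing/shadowing lemma with its exponential contraction rates — and some care is needed both to match parametrizations (controlling $|T-T'|$) and to handle the geometry of the discrepancy near the two endpoints.

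Finally, uniqueness modulo constants follows from the elementary fact that $Xv = 0$ with $v$ continuous forces $v$ constant: $v$ is then constant on each flow line, in particular on the dense orbit through $x_0$, hence constant on $\M$. Thus any two $C^\alpha$ solutions of $Xu = f$ differ by an additive constant, which completes the proof.
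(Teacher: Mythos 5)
The paper does not prove this statement: it is quoted verbatim from Liv\v{s}ic's original work \cite{Livsic-72} as background for the regularity results that the paper actually establishes (Theorems \ref{theorem:regularity} and \ref{theorem:smoothness}), so there is no internal proof to compare against. Your argument is the standard and correct one — define $u$ by integrating $f$ along a dense orbit, use the Anosov closing lemma together with $If=0$ on the shadowing periodic orbit to get the uniform $C^\alpha$ bound (the convergence of $\int_0^T(e^{-\lambda\alpha s}+e^{-\lambda\alpha(T-s)})\,\dd s$ uniformly in $T$ being the key point), then extend by density; the only detail you elide is the case of short return times $T=t_2-t_1$, where the closing lemma is not invoked and one instead uses that $X$ is nonvanishing to get $d(y,\varphi_T(y))\gtrsim T$, but this is routine and your "some care is needed" covers it.
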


We call \emph{coboundaries} the functions of the form $Xu$, and we call \emph{cohomological equation} an equality of the form $f =Xu$. It is also natural to deal with other regularities, namely if $f \in C^\infty(\M)$, then one expects that $u \in C^\infty(\M)$. This was proved by de la Llave-Marco-Moriy\'on \cite{DeLaLlave-Marco-Moryon-86} but the proof uses more sophisticated tools than \cite{Livsic-72}; it relies on the Journé Lemma \cite{Journe-86}.

\begin{theoremexp}[\cite{DeLaLlave-Marco-Moryon-86}]
\label{theorem:livsic-smooth}
Assume $X$ is transitive. For $k \in \left\{1,2,...,+\infty\right\}$, let $f \in C^k(\M)$ be a function such that $If=0$. Then, there exists $u \in C^k(\M)$ such that $f=Xu$.
\end{theoremexp}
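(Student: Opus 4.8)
The plan is to deduce this smooth version from the Hölder Liv\v{s}ic theorem stated just above, together with the microlocal package of Theorems~\ref{theorem:source} and~\ref{theorem:regularity}. First, the Hölder Liv\v{s}ic theorem (this is where transitivity of $X$ is used) produces a solution $u\in C^\alpha(\M)$ of $Xu=f$ for the small exponent $\alpha>0$ it provides, unique up to an additive constant; without loss of generality $\alpha\in(0,1)$, so $C^\alpha=C^\alpha_\ast$. It then remains only to show that \emph{this particular} $u$ is as regular as $f$. The key elementary point is that here $\X=X$ acts on the trivial line bundle, so its fibrewise propagator $e^{-tX}$ is the identity and $M(t,x)\equiv 1$; since $\|\dd_x\varphi_{-T}|_{E^u}\|$ decays exponentially, \eqref{equation:threshold} immediately gives $\omega_+(\X)=\omega_-(\X)=0$. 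In particular $\alpha>\max(0,\omega_+(\X),\omega_-(\X))$, which is precisely the hypothesis under which $u$ can be fed into the propagation estimates.

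When $k=+\infty$ we are done: $f\in C^\infty(\M)$, $u\in C^\alpha(\M)$ and $\alpha>\max(0,\omega_\pm(\X))$, so Theorem~\ref{theorem:regularity} applies and yields $u\in C^\infty(\M)$. For finite $k$ I would use the $C^s_\ast$-valued version of that theorem recorded in the remark following it: since $f\in C^k\subset C^k_\ast$ and $u\in C^\alpha$ with $\alpha>\max(0,\omega_\pm(\X))$, the same argument gives $u\in C^k_\ast(\M)$. It is worth recalling the mechanism, since it is the heart of the matter: microlocally near $E_0^\ast$ the operator $-iX$ is elliptic of order one, so elliptic regularity in Hölder--Zygmund spaces and $Xu=f$ confine $\WF(u)$ to $E_s^\ast\oplus E_u^\ast$ and give microlocal $C^{k+1}_\ast$ regularity of $u$ off the characteristic set; near $E_s^\ast$ the bootstrap half of the source estimate, Theorem~\ref{theorem:source}, upgrades $u$ from $C^\alpha_\ast$ to $C^k_\ast$ because $B\X u=Bf\in C^k_\ast$ and $\alpha>\omega_+(\X)=0$; the time-reversed estimate does the same near $E_u^\ast$; and since every point of $E_s^\ast\oplus E_u^\ast$ flows forward or backward into an arbitrarily small conic neighbourhood of $E_s^\ast\cap\partial T^\ast\M$ or $E_u^\ast\cap\partial T^\ast\M$, the standard propagation of singularities of Proposition~\ref{prop:usual-propagation-Cs} spreads the $C^k_\ast$ bound over the whole characteristic set. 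Conceptually, this chain of microlocal estimates is exactly what plays the role of Journé's lemma in the classical proof of \cite{DeLaLlave-Marco-Moryon-86}.

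The last step, which I expect to be the main obstacle, is to pass from the Hölder--Zygmund regularity $C^k_\ast$ produced by the microlocal argument to the classical regularity $C^k$ when $k$ is a positive integer, where the two scales genuinely differ. Here I would first use that $Xu=f$ already controls the "flow" derivatives: in a flow box where $X=\partial_t$, any derivative of order $k$ containing at least one $\partial_t$ equals a derivative of $f$ of order $\le k$, hence is continuous, so that only the purely transverse derivatives of order $k$, along $E_s\oplus E_u$, are at stake. For those I would close the argument by a supplementary soft argument — for instance by summing the cohomological equation along strong stable and strong unstable leaves, which converges absolutely and yields genuine leafwise $C^k$ regularity of $u$ matching that of $f$, and then feeding this back into the propagation step — or else one simply formulates the conclusion on the Hölder scale $C^{k,\beta}$ with $\beta\in(0,1)$, in which case this endpoint issue does not arise and the previous paragraph already gives everything. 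I would expect the write-up to be essentially immediate on the $C^\infty$ and $C^{k,\beta}$ scales and to require some care only at integer regularity.
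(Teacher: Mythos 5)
For $k=\infty$ your derivation is exactly the route the paper intends: Theorem \ref{theorem:livsic-smooth} is stated as a literature result, and the paper's own way of recovering it is precisely your chain --- the H\"older Liv\v{s}ic theorem (this is where transitivity enters) gives $u\in C^\alpha$ with $Xu=f$, the trivial bundle has $M(t,x)\equiv 1$ so $\omega_\pm(X)=0$, and Theorem \ref{theorem:regularity} (equivalently, Theorem \ref{theorem:smoothness} specialized to the trivial connection) bootstraps $u$ to $C^\infty$. That part is complete and correct.

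The only place you go beyond what the paper actually carries out is finite integer $k$, and you have correctly located the obstruction: the $C^s_\ast$ version of Theorem \ref{theorem:regularity} only yields $u\in C^k_\ast\supsetneq C^k$, and the paper never closes this gap with its own machinery (it attributes the $C^k$ statement to \cite{DeLaLlave-Marco-Moryon-86} and only runs its argument on the $C^\infty$ scale). Your sketched repair is not yet a proof: the leafwise telescoping along $W^s$ and $W^u$ does give leafwise $C^k$ regularity, but passing from regularity along the two transverse foliations to global $C^k$ regularity is exactly the content of Journ\'e's lemma --- i.e.\ the classical input the microlocal approach was meant to replace --- and ``feeding it back into the propagation step'' has no obvious meaning, since the source and propagation estimates are formulated on the Zygmund scale and cannot distinguish $C^k$ from $C^k_\ast$. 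So either state the finite-$k$ conclusion as $u\in C^k_\ast$ (or as $u\in C^{k-1,\beta}$ for every $\beta<1$), which your argument fully proves and which is all the paper's machinery delivers, or accept that the integer endpoint genuinely requires the classical Journ\'e-type argument.
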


We are studying flows here, but it may worthwhile to consider that similar problems and results can be formulated in the case of Anosov \emph{diffeomorphisms} $F : \M \rightarrow \M$ (see \cite{Hasselblatt-Katok-95}). 

The Liv\v{s}ic's theorem can be extended in several directions. Notably:
\begin{itemize}
	\item If $If \geq 0$ and $f \in C^\alpha(\M)$, one can decompose $f$ as $f = Xu + h$ for some $h \geq 0$. This result of \cite{Lopes-Thieullen-05} comes with a control $\|h\|_{C^{\alpha}} \leq C \|f\|_{C^\alpha}$ with $0<\beta < \alpha$. 
	\item if $If = \mc{O}(\eps)$ and $f \in C^\alpha(\M)$, one can decompose $f$ as $f = Xu+h$, with again a control of the form $\|h\|_{C^\beta} \leq C \eps^\gamma$, for some $\gamma > 0$ and $0<\beta < \alpha$. This is a recent result of \cite{Gouezel-Lefeuvre-19}.
	\end{itemize}
In both cases, the statement is only available with values of $\beta$ strictly smaller than $1$. The question of obtaining the control with $\beta = \alpha$, even for small $\alpha$, is open. 

One can also consider a more general setting involving (trivial) principal bundles. Let $G$ be a Lie group and let $C$ be a smooth cocycle, namely a map $C : \mc{M} \times \R \rightarrow G$ such that
\[
C(x,t+s) = C(\varphi_t (x), s)C(x,t)
\]
We say that $C$ satisfies the \emph{periodic orbit obstruction} if for every periodic point $x \in \M$ (of period $T$), one has $C(x,T) = e_G$. Liv\v{s}ic \cite{Livsic-72} characterized such cocycles (when the bundle is trivial) in Hölder regularity:

\begin{theoremexp}[\cite{Livsic-72}]
Assume $X$ is transitive, $G$ is compact, $C : \mc{M} \times \R \rightarrow G$ is a $\alpha$-Hölder continuous cocycle satisfying the periodic orbit obstruction. Then, there exists a $\alpha$-Hölder continuous map $u : \M \rightarrow G$ such that for all $x \in \M, t \in \R$: $C(x,t) = u(\varphi_t(x))u(x)^{-1}$. Moreover, $u$ is unique modulo multiplication by a constant element $g_0 \in G$.
\end{theoremexp}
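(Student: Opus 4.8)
This is the classical Livšic cocycle theorem, and the plan is to run the \emph{chain argument} of Livšic combined with the Anosov closing lemma. First I would reduce to a concrete setting: by the Peter--Weyl theorem, embed $G$ as a closed subgroup of some $U(n)$, so that $G$ carries a bi-invariant metric $d_G$ (the restriction of the operator-norm distance), with respect to which $C$ is still $\alpha$-Hölder; this makes the products of group elements that appear below amenable to estimates and guarantees that Cauchy sequences in $G$ converge. Since $X$ is transitive there is a point $x_0 \in \M$ with dense forward orbit. On this orbit I define the candidate transfer map
\[
u(\varphi_\tau(x_0)) := C(x_0, \tau), \qquad \tau \ge 0,
\]
which is well defined (if $x_0$ is periodic of period $T_0$ this uses $C(x_0,T_0)=e_G$, the periodic obstruction), and the cocycle identity $C(x_0, \tau + s) = C(\varphi_s(x_0), \tau)\,C(x_0,s)$ shows immediately that $C(x,t) = u(\varphi_t(x))\,u(x)^{-1}$ holds for $x$ on the orbit of $x_0$.

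\textbf{Key step.} The heart of the proof is to show that $u$ is uniformly $\alpha$-Hölder on the orbit of $x_0$, i.e. there are $\delta_0, K > 0$ with
\[
d_G\big(u(\varphi_a(x_0)),\, u(\varphi_b(x_0))\big) \le K\, d\big(\varphi_a(x_0), \varphi_b(x_0)\big)^\alpha
\]
whenever $d(\varphi_a(x_0), \varphi_b(x_0)) < \delta_0$. Fix such $a < b$, put $T = b - a$ and $\eps = d(\varphi_a(x_0),\varphi_b(x_0))$; for $T$ bounded the estimate is clear from joint continuity of $C$, so assume $T$ large. By the Anosov closing lemma there is a periodic point $p$ of period $T'$ with $|T' - T| \lesssim \eps$ such that, after a small reparametrization of time, $d\big(\varphi_{a+t}(x_0), \varphi_t(p)\big) \lesssim \eps\, e^{-\theta\min(t,\,T-t)}$ for $0 \le t \le T$, with $\theta > 0$ the hyperbolicity rate. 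Now $u(\varphi_b(x_0))\,u(\varphi_a(x_0))^{-1} = C(\varphi_a(x_0), T)$, while $C(p, T') = e_G$ by the periodic obstruction. Writing both elements as ordered products of pieces $C(\cdot, 1)$ (plus one bounded-length boundary piece absorbing $|T' - T|$), telescoping, and using bi-invariance of $d_G$ together with the $\alpha$-Hölder bound on $C$, the factorwise discrepancies are $\lesssim \eps^\alpha e^{-\alpha\theta\min(t,T-t)}$ and sum geometrically to $\lesssim \eps^\alpha$. This is the claimed modulus. Since $\M$ is compact and $G$ complete, $u$ then extends uniquely to an $\alpha$-Hölder map $u : \M \to G$. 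Alternatively one can build the cocycle's stable and unstable holonomies $\lim_{t\to\pm\infty} C(y,t)^{-1}C(x,t)$ along local leaves and patch $u$ via the local product structure; the closing-lemma route is the shortest.

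\textbf{Conclusion and uniqueness.} For each fixed $t$, the map $x \mapsto C(x,t)\,u(x)\,u(\varphi_t(x))^{-1}$ is continuous and equals $e_G$ on the dense orbit of $x_0$, hence identically, giving $C(x,t) = u(\varphi_t(x))u(x)^{-1}$ on all of $\M$. If $u_1,u_2$ are two solutions then $w := u_2^{-1}u_1$ is continuous and $\varphi_t$-invariant for all $t \in \R$, so transitivity forces $w \equiv g_0$ constant and $u_1 = u_2 g_0$. The main obstacle is entirely in the Key step: producing the shadowing periodic orbit with genuine exponential control, handling the time reparametrization intrinsic to flows so that the period $T'$ matches $T$ up to $O(\eps)$ and the obstruction $C(p,T') = e_G$ becomes usable, and carrying out the telescoping estimate cleanly — here compactness of $G$ is exactly what legitimizes the convergence of the products and the passage from uniform continuity on the orbit to a global $\alpha$-Hölder solution.
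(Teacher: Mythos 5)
This statement is quoted in the paper as a classical theorem of Liv\v{s}ic, with a citation to \cite{Livsic-72} and no proof given, so there is no internal argument to compare against. Your reconstruction is the standard Liv\v{s}ic chain argument (define $u$ along a dense orbit, prove a uniform H\"older modulus via the Anosov closing lemma and a telescoping estimate in a bi-invariant metric, extend by density, conclude by continuity and transitivity) and it is essentially correct. The only imprecision is the dichotomy in the Key step: when $\varepsilon$ is small but $T=b-a$ is of intermediate size (bounded above and below), joint continuity of $C$ alone does not give $d_G(C(\varphi_a(x_0),T),e_G)\lesssim \varepsilon^{\alpha}$ --- you still need the closing lemma and the periodic orbit obstruction there, just with a bounded number of telescoping terms; only the regime $T\lesssim \varepsilon$ is handled by continuity in $t$ together with the flow-box comparison $T\asymp\varepsilon$.
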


Once again, proving the smooth regularity of $u$ when $C$ is smooth is a more difficult problem. This was solved by \cite[Theorem 2.4]{Nitica-Torok-98}:

\begin{theoremexp}[\cite{Nitica-Torok-98}]
\label{theorem:livsic-smooth-cocycle}
Assume that $G$ is a closed subgroup in a finite-dimensional Lie group, $C : \mc{M} \times \R \rightarrow G$ is a smooth cocycle such that there exists a $\alpha$-Hölder (for some $\alpha > 0$) $u : \M \rightarrow G$ such that for all $x \in \M, t \in \R$: $C(x,t) = u(\varphi_t(x))u(x)^{-1}$. Then $u$ is smooth.
\end{theoremexp}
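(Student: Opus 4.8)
The plan is to recast the conjugacy equation as a cohomological equation $\X w = 0$ on a trivial vector bundle over $\M$ and then to apply Theorem \ref{theorem:regularity}. The point that makes this work under the extremely weak a priori assumption $u \in C^\alpha$ with $\alpha > 0$ arbitrarily small is that the operator $\X$ produced this way will have \emph{vanishing} thresholds, $\omega_+(\X) = \omega_-(\X) = 0$, because the solution $w$ will be bounded with bounded inverse. Replacing $u$ by $u(x_0)^{-1}u$, which satisfies the conjugacy relation for the (still smooth) conjugated cocycle $u(x_0)^{-1}C\,u(x_0)$, we may assume $u(\M)$, hence $G$, to be connected.

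First I would fix an auxiliary finite-dimensional representation $\pi \colon G \to GL(W)$, and set $\hat C := \pi\circ C$ and $\hat F(x) := \tfrac{d}{dt}\big|_{t=0}\hat C(x,t) \in \mathfrak{gl}(W)$, a smooth function since $C$ is smooth. Differentiating $\hat C(x,t+s) = \hat C(\varphi_t x, s)\hat C(x,t)$ in $s$ at $s = 0$ gives $\partial_t\hat C(x,t) = \hat F(\varphi_t x)\hat C(x,t)$, hence $\hat C(x,t) = I + \int_0^t \hat F(\varphi_s x)\hat C(x,s)\,ds$. Put $w := \pi\circ u \colon \M \to GL(W) \subset \mathrm{End}(W)$, a $C^\alpha$ section of the trivial Hermitian bundle $\E_W := \M\times\mathrm{End}(W)$ (equipped with the constant Hilbert--Schmidt metric). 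Applying $\pi$ to $C(x,t) = u(\varphi_t x)u(x)^{-1}$ gives $w(\varphi_t x) = \hat C(x,t)w(x)$, whence, along every orbit, $w(\varphi_t x) = w(x) + \int_0^t \hat F(\varphi_s x)w(\varphi_s x)\,ds$: the map $t \mapsto w(\varphi_t x)$ is $C^1$ along the flow, with derivative $\hat F w$ read along the orbit. Setting $\X_W v := Xv - \hat F\cdot v$, which obeys the Leibniz rule \eqref{equation:lie} and is thus a Lie derivative lifting $X$, this says precisely that $\X_W w = 0$ in $\mathcal D'(\M, \E_W)$, with smooth right-hand side $0$.

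The crux is then the computation $\omega_\pm(\X_W) = 0$. Along the orbit through $x$, $\X_W v = 0$ is the linear ODE $\dot Y = \hat F(\varphi_\bullet x)Y$, whose fundamental solution from time $-T$ to time $0$ is $\hat C(\varphi_{-T}x, T)$; thus $e^{-T\X_W}(x)$ is left multiplication by $\hat C(\varphi_{-T}x, T) = \pi(u(x))\,\pi(u(\varphi_{-T}x))^{-1}$, and therefore $M(T,x) \le K := \|\pi\circ u\|_{L^\infty(\M)}\|(\pi\circ u)^{-1}\|_{L^\infty(\M)} < \infty$, uniformly in $T \ge 0$ and $x$ (here $\pi\circ u$ and its inverse are continuous on the compact $\M$). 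Combining this with $\|\dd_x\varphi_{-T}|_{E^u}\| \le Ce^{-\lambda T}$ from \eqref{equation:anosov}, one gets, for every $\rho > 0$, $M(T,x)\,\|\dd_x\varphi_{-T}|_{E^u}\|^{\rho} \le KC^{\rho}e^{-\rho\lambda T}$, so the defining condition of $\omega_+$ in \eqref{equation:threshold} holds; hence $\omega_+(\X_W) = 0$, and likewise $\omega_-(\X_W) = 0$. Since $w \in C^\alpha(\M,\E_W)$ with $\alpha > 0 = \max(0,\omega_+(\X_W),\omega_-(\X_W))$ and $\X_W w = 0 \in C^\infty$, Theorem \ref{theorem:regularity} yields $\pi\circ u \in C^\infty(\M,\mathrm{End}(W))$, i.e.\ $\pi\circ u \colon \M \to GL(W)$ is smooth.

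It remains to deduce that $u$ itself is smooth from the fact that $\pi\circ u$ is smooth for every finite-dimensional representation $\pi$ of $G$. Here Ado's theorem enters, as in the acknowledgements: $\mathfrak g := \mathrm{Lie}(G)$ has a faithful finite-dimensional representation, from which one deduces that the intersection of the kernels of all finite-dimensional representations of $G$ is a discrete central subgroup $N$; choosing finitely many representations $\pi_1,\dots,\pi_k$ with $\bigcap_i\ker\pi_i = N$ and setting $\pi_0 := \bigoplus_i\pi_i$, the projection $q \colon G \to G/N \cong \pi_0(G)$ is a covering map of Lie groups. Then $q\circ u = \pi_0\circ u$ is smooth by the previous step, and $u$, being a continuous lift of a smooth map through the covering $q$, is smooth. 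I expect this last reduction to the (essentially) linear setting to be the only genuinely delicate point: for non-linear groups such as $\widetilde{\mathrm{SL}(2,\R)}$, $G$ has no faithful finite-dimensional representation, so one must argue carefully that finitely many representations of $G$ already detect all of $\mathfrak g$ (equivalently, that $N$ is discrete). The analytic heart of the proof --- the passage to Theorem \ref{theorem:regularity} via boundedness of $\pi\circ u$ and $(\pi\circ u)^{-1}$ --- is, by contrast, completely elementary.
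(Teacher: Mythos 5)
Your argument is, in substance, the paper's own proof of this statement (Theorem \ref{theorem:nt}): for a linear target you recast the conjugacy relation as $(X-\hat F)w=0$ with $w=\pi\circ u$, observe that the propagator is left multiplication by $\hat C(\varphi_{-T}x,T)=\pi(u(x))\,\pi(u(\varphi_{-T}x))^{-1}$, hence uniformly bounded because $u$ is continuous on the compact $\M$, deduce $\omega_{\pm}=0$, and conclude by Theorem \ref{theorem:regularity}; the paper does exactly this (working with the columns $u\cdot\mathbf{e}_i$ rather than with $\mathrm{End}(W)$), and the reduction of the general case to the linear one via Ado's theorem is also the route taken there. The linear case as you write it is complete and correct.

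The step you yourself flag as delicate is, however, a genuine gap --- and it is equally a gap in the paper's proof. The claim that the common kernel $N$ of all finite-dimensional representations of a connected Lie group $G$ is discrete (equivalently, the paper's assertion that ``any Lie group covers a linear group'') is false in general. Take $G=H_3(\R)/\Z$, the reduced Heisenberg group (quotient of the Heisenberg group by a lattice in its one-dimensional center). In any finite-dimensional representation the central element $Z$ of the Lie algebra acts nilpotently, since it lies in the derived algebra of a nilpotent Lie algebra; its exponential is therefore unipotent, and since $\exp(Z)$ is the identity in $G$, the unipotent matrix $\exp(d\rho(Z))$ equals the identity, forcing $d\rho(Z)=0$. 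Hence every finite-dimensional representation of $G$ kills the entire circle center, $N\supset\T$ is not discrete, and $G$ covers no linear group, so the covering-space argument (and the paper's version of it) does not apply to such targets. Ado's theorem only gives a faithful representation of the Lie algebra, i.e.\ a linear group locally isomorphic to $G$ and covered by the universal cover $\widetilde G$, not necessarily by $G$ itself. To handle all finite-dimensional Lie groups one must either retain a linearity-type hypothesis on $G$ (as in the original formulation of \cite{Nitica-Torok-98}), or replace this reduction by one that treats the non-discrete part of the linearizer separately --- e.g.\ combine the adjoint representation, which is an honest representation of $G$ with kernel the center, with a direct local argument for the central component of $u$. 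None of this affects the analytic core of your proof, which coincides with the paper's.
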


The transitivity assumption is not needed and $G$ need not be compact here. Note that the trivial vector bundle $\M \times \C$ can be interpreted in the framework of the previous Liv\v{s}ic cocycle theorem by introducing the cocycle
\[
C(x,t) := \exp\left(\int_0^t f(\varphi_s(x)) \dd s \right).
\]
Then, $If = 0$ if and only if $C$ satisfies the periodic orbit obstruction in the Lie Group $(\R^+_*,\times)$. One can extend the definition of a cocycle to include the case of the parallel transport map induced by a (complex) vector bundle (of rank $r$) endowed with a connection $(\mc{E},\nabla)$ over $\M$.

\subsection{Regularity of cohomological equations via source estimates}

%
%We now explain how the previous regularity results can be retrieved from Theorem \ref{theorem:source}. If $f \in C^\infty(\M)$ and $If = 0$, then following Liv\v{s}ic's argument \cite{Livsic-72}, it is easy to prove that $f = Xu$ for some explicit $u \in C^\alpha(\M)$ (and the same holds in the Lie group case). What is not trivial is to ``bootstrap" the regularity of $u$ in order to prove that $u$ is smooth. Indeed, $X$ — seen as a differential operator of order $1$ — is not elliptic as its principal symbol $(x,\xi) \mapsto i \langle \xi,X(x) \rangle$ vanishes on the \emph{characteristic set} $\Sigma := \left\{p=0\right\} = E_s^* \oplus E_u^*$, which is a hyperplane of each $T^*_x\M, x \in \M$. Nevertheless, we know by a parametrix construction that $u$ is smooth outside $\Sigma$, in the sense that for any $A \in \Psi^0(\M)$ such that $\WF(A) \cap \Sigma = \emptyset$, $Au \in C^\infty(\M)$. 
%
%
%If we want to use propagation of singularities on $\Sigma=E_s^*\oplus E_u^*$, then we need to know \emph{a priori} that $u$ is smooth somewhere on $\Sigma$ in order to propagate regularity (using the general principle of \cite{Duistermaat-Hormander-72} that regularity propagates along flowlines of $\Phi_t$). The solution is provided by the radial source estimate of Theorem \ref{theorem:source} applied with $L = E_s^*$ which will provide a priori regularity for $u$ on $\Sigma$ (taking $-X$, the same argument can also be applied near $E_u^*$). This approach leads to the following:

We now explain how the previous regularity results can be retrieved from Theorem \ref{theorem:source}. We start with a statement we did not find in the literature:

\begin{theorem}
\label{theorem:smoothness}
Let $\nabla^{\E}$ be a unitary connection on a vector bundle $\E \rightarrow \M$. Then $\omega(\nabla^{\E}_X)=0$. In particular, if $f \in C^\infty(\M)$, $u \in C^\alpha(\M)$ for some $\alpha > 0$ and $\nabla^{\E}_X u = f$, then $u \in C^\infty(\M)$.
\end{theorem}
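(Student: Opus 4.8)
The plan is to reduce the statement to the computation $\omega_\pm(\nabla^\E_X) = 0$, after which Theorem~\ref{theorem:regularity} immediately yields the regularity conclusion. The only point carrying real content is the identification of the propagator $e^{-t\nabla^\E_X}$: since $\nabla^\E_X v = 0$ along an orbit precisely means that $v$ is $\nabla^\E$-parallel along that orbit, the pointwise map $e^{-t\nabla^\E_X}\colon \E_{\varphi_{-t}(x)} \to \E_x$ is nothing but the $\nabla^\E$-parallel transport along $\tau \mapsto \varphi_{\tau-t}(x)$, $\tau \in [0,t]$. As $\nabla^\E$ is unitary, parallel transport is a linear isometry between the Hermitian fibers, so that $M(t,x) = 1$ for all $x \in \M$ and $t \in \R$.

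With $M \equiv 1$, the quantity in the definition \eqref{equation:threshold} of the forward threshold simplifies: for every $\rho > 0$,
\[
\sup_{x \in \M} \lim_{T \to +\infty} \frac1T \log\Big[ M(T,x)\,\|\dd_x\varphi_{-T}|_{E^u}\|^\rho \Big] = \rho\,\sup_{x \in \M} \lim_{T \to +\infty} \frac1T \log \|\dd_x\varphi_{-T}|_{E^u}\|.
\]
Applying the Anosov estimate \eqref{equation:anosov} with $t = -T \le 0$ and $v \in E^u$ gives $\|\dd_x\varphi_{-T}|_{E^u}\| \le C e^{-\lambda T}$, so the right-hand side is $\le -\rho\lambda < 0$. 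Hence every $\rho > 0$ satisfies the defining condition of $\omega_+(\nabla^\E_X)$, i.e. $\omega_+(\nabla^\E_X) = 0$. The same argument run in reversed time — legitimate since the time-reversed connection remains unitary and $E^s$ is exponentially contracted by $\varphi_T$ as $T \to +\infty$ — gives $\omega_-(\nabla^\E_X) = 0$; in particular $\omega(\nabla^\E_X) = 0$.

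For the last assertion, let $f \in C^\infty(\M,\E)$ and $u \in C^\alpha(\M,\E)$ with $\nabla^\E_X u = f$. Shrinking $\alpha$ slightly we may assume $\alpha \notin \N$, so that $C^\alpha(\M,\E) = C^\alpha_*(\M,\E)$; since $\alpha > 0 = \max\big(0, \omega_+(\nabla^\E_X), \omega_-(\nabla^\E_X)\big)$, Theorem~\ref{theorem:regularity} applies and gives $u \in C^\infty(\M,\E)$. I do not expect any genuine obstacle here: the one step requiring care is the translation of ``unitary connection'' into $M \equiv 1$, and the only thing to watch in the threshold computation is that \eqref{equation:threshold} only sees the growth of $\|\dd_x\varphi_{-T}|_{E^u}\|$, which is a \emph{contraction} as $T \to +\infty$, so the threshold is not merely finite but exactly $0$.
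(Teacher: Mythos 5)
Your proof is correct and follows the same route as the paper: the paper likewise observes that unitarity makes the propagator an isometry (so $M(T,x)\equiv 1$, equivalently $\|e^{t\nabla^{\E}_X}\|_{\mc{L}(L^\infty,L^\infty)}=1$), deduces $\omega_\pm(\nabla^{\E}_X)=0$ directly from \eqref{equation:threshold} together with the exponential contraction of $E^u$ under $\dd\varphi_{-T}$, and then invokes Theorem \ref{theorem:regularity}. Your write-up merely makes explicit the identification of $e^{-t\nabla^{\E}_X}$ with parallel transport and the time-reversed computation of $\omega_-$, which the paper leaves implicit.
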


Theorem \ref{theorem:smoothness} has important consequences as we shall see. It is also reinvested in subsequent papers, see \cite{Cekic-Lefeuvre-21-1, Cekic-Lefeuvre-21-2}.

\begin{remark}
Note that the fact that $\nabla^{\E}_X$ is unitary is actually not necessary, but in the non-unitary case the exponent $\omega(\nabla^{\E}_X)$ might be strictly positive and then $\alpha$ has to be taken large enough in order to enforce the threshold condition \eqref{equation:threshold}.
\end{remark}

\begin{proof}
The fact that $\omega(\nabla^{\E}_X) = 0$ is a straightforward consequence of the definition of the threshold \eqref{equation:threshold} and the fact that the propagator $\|e^{t\nabla^{\E}_X}\|_{\mc{L}(L^\infty,L^\infty)} = 1$ is bounded, independently of $t \in \R$ since the connection is unitary. Now it suffices to apply Theorem \ref{theorem:regularity}. % Now, we assume that $\nabla_X^{\E} u=f$ and $f \in C^\infty(\M,\mc{E}),u \in C^\alpha(\M,\mc{E})$. The threshold condition \eqref{equation:threshold} is automatically satisfied. We can then apply Theorem \ref{theorem:source} with any $s \geq \alpha$. This implies that $u$ is microlocally smooth near $E_s^*$. By taking $-\nabla_X$ instead, the same holds microlocally near $E_u^*$. Then, by classical propagation of singularities, one concludes that $u$ is smooth everywhere on $\Sigma$ and thus $u \in C^\infty(\M,\mc{E})$.
\end{proof}

We conclude with the proof of the general smooth Liv\v{s}ic cocycle theorem for transparent cocycles with values in a finite-dimensional Lie group $G$ ($G$ needs not be compact). We retrieve \cite[Theorem 2.4]{Nitica-Torok-98}:

\begin{theorem}
\label{theorem:nt}
Let $G$ be a finite-dimensional Lie group. Let $C : \M \times \R \rightarrow G$ be a smooth cocycle such that there exists $u \in C^\alpha(\M,G)$ (for some $\alpha > 0$) such that $C(x,t)=u(\varphi_t(x))u(x)^{-1}$, for all $x \in \M, t \in \R$. Then $u$ is smooth.
\end{theorem}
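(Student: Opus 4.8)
\medskip
\noindent\textbf{Proof proposal.} The plan is to differentiate the coboundary relation $C(x,t) = u(\varphi_t(x)) u(x)^{-1}$ in $t$ at $t = 0$, so as to recast the smooth-coboundary hypothesis as a \emph{linear} cohomological equation of the kind governed by Theorem~\ref{theorem:regularity}, after linearising $G$ by a finite-dimensional representation. Writing $\mathfrak g$ for the Lie algebra of $G$, I would first set $F(x) := \frac{\dd}{\dd t}\big|_{t=0} C(x,t) \in \mathfrak g$, which is a \emph{smooth} section $F \in C^\infty(\M,\mathfrak g)$ since $C$ is smooth, and record the elementary but crucial fact that $u(\M)$ is a compact subset of $G$, so that $C(x,t) = u(\varphi_t(x))u(x)^{-1}$ remains in the fixed compact set $u(\M)\cdot u(\M)^{-1}$ for \emph{all} $(x,t)$ --- this compactness is what will force the relevant thresholds to vanish. (One may assume $\M$ connected.)

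\medskip
\noindent\emph{Step 1: smoothness modulo the centre.} Applying the adjoint representation, put $V := \mathrm{Ad}\circ u \in C^\alpha(\M,\mathrm{End}(\mathfrak g))$. The cocycle relation gives $V(\varphi_t(x)) = \mathrm{Ad}_{C(x,t)}\, V(x)$, and since $\mathrm{Ad}_{C(x,t)} = \mathbbm{1} + t\,\mathrm{ad}_{F(x)} + O(t^2)$ uniformly in $x$, differentiating at $t=0$ yields, in the sense of distributions, $\X V = 0$ where $\X := X - \mathrm{ad}_{F(\cdot)}$ is a derivation lifting $X$ on the trivial bundle $\mathcal E := \M \times \mathrm{End}(\mathfrak g)$ in the sense of \eqref{equation:lie}. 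The propagator $e^{-t\X}$ acts by $w \mapsto \mathrm{Ad}_{C(\varphi_{-t}(x),t)}\circ w = \mathrm{Ad}_{u(x)u(\varphi_{-t}(x))^{-1}}\circ w$, whose norm is bounded uniformly in $(x,t)$ by the compactness above, so $\omega_\pm(\X) = 0$ by \eqref{equation:threshold}; since $V \in C^\alpha$ with $\alpha > 0 = \max(0,\omega_+(\X),\omega_-(\X))$, Theorem~\ref{theorem:regularity} then gives $V \in C^\infty$. As $\mathrm{Ad}$ factors as $G \xrightarrow{\pi} G/Z(G) \xrightarrow{\overline{\mathrm{Ad}}} \mathrm{GL}(\mathfrak g)$ with $\overline{\mathrm{Ad}}$ an injective immersion, and $\bar u := \pi\circ u$ is continuous with $\overline{\mathrm{Ad}}\circ\bar u = V$ smooth, one concludes that $\bar u : \M \to G/Z(G)$ is smooth.

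\medskip
\noindent\emph{Step 2: the central part.} It then remains to upgrade smoothness of $\bar u$ to smoothness of $u$, which amounts to the same question for the \emph{abelian} group $Z(G)$. I would observe that the pullback $u^{\ast}(G \to G/Z(G))$ is a smooth principal $Z(G)$-bundle over $\M$ (since $\bar u$ is now smooth) that is topologically trivial --- the continuous map $x \mapsto (x,u(x))$ being a section --- hence smoothly trivial, so it admits a smooth section, i.e. some $\sigma \in C^\infty(\M,G)$ with $\pi\circ\sigma = \bar u$. Then $z := \sigma^{-1}u : \M \to Z(G)$ is $C^\alpha$ and globally defined and satisfies $z(\varphi_t(x)) = \widetilde C(x,t)z(x)$ with $\widetilde C(x,t) := \sigma(\varphi_t(x))^{-1}C(x,t)\sigma(x) \in Z(G)$ a \emph{smooth} cocycle. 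Since $Z(G)$ is abelian, after multiplying $z$ by a constant and using connectedness of $\M$ one may assume $z$ takes values in $Z(G)^0 \cong \R^a \times \T^b$, and split $z = (z_{\R},z_{\T})$ and $\widetilde C = (\widetilde C_{\R},\widetilde C_{\T})$. The $\R^a$-part solves $X z_{\R} = \partial_t\widetilde C_{\R}|_{t=0} \in C^\infty(\M,\R^a)$ on the trivial (unitary) bundle, so $z_{\R}\in C^\infty$ by Theorem~\ref{theorem:smoothness}; for the $\T^b$-part, composing with the faithful representation $\T^b \hookrightarrow U(1)^b \subset \mathrm{GL}(b,\C)$ turns $z_{\T}$ into $W \in C^\alpha(\M, M_b(\C))$ solving $\X W = 0$ for the derivation attached to the resulting \emph{unitary} matrix cocycle (its zeroth-order part being diagonal and purely imaginary, hence skew-Hermitian), so Theorem~\ref{theorem:smoothness} again gives $W \in C^\infty$, whence $z_{\T} \in C^\infty$. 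Therefore $z$, and thus $u = \sigma z$, is smooth.

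\medskip
\noindent\emph{The main obstacle.} The hard part is the linearisation: $G$ itself need not be a matrix group, which is what forces the two-step structure --- descend to $G/Z(G)$, which is automatically linear via $\mathrm{Ad}$ and for which the propagator is automatically bounded by compactness of $u(\M)$, and then handle the abelian centre by elementary faithful representations --- and it is exactly at the point of realising a non-semisimple Lie group (here the centre, or more generally any solvable factor one might encounter) by matrices that Ado's theorem is the underlying input. Beyond that, the two points demanding care are: (i) arranging every reduction to be \emph{global} on $\M$, since Theorem~\ref{theorem:regularity} is a statement over the whole manifold --- handled above via the smooth triviality of $u^{\ast}(G \to G/Z(G))$; and (ii) checking that each differentiated cocycle identity holds in the distributional sense, which follows from the uniform first-order Taylor expansions of $\mathrm{Ad}_{C(x,t)}$ and of the relevant matrix cocycles at $t=0$.
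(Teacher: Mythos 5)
Your proof is correct, and its overall shape (differentiate the coboundary relation at $t=0$, observe that the propagator of the resulting transport operator is uniformly bounded because $C(x,t)=u(\varphi_t(x))u(x)^{-1}$ with $u(\M)$ compact, hence the threshold $\omega_\pm$ vanishes and Theorem~\ref{theorem:regularity} applies) is exactly the analytic core of the paper's argument. Where you genuinely diverge is in the linearisation. The paper first treats linear $G\hookrightarrow \mathrm{GL}_r(\C)$, applying Theorem~\ref{theorem:source} to the columns of the matrix-valued $u$ solving $(-X+A)u=0$, and then reduces the general case in one stroke by invoking Ado's theorem to produce a covering $\pi:G\to G_0$ onto a linear group and applying the linear case to $\pi\circ u$. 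You instead linearise with the adjoint representation --- always available, no Ado needed at this stage --- which only yields smoothness of $u$ modulo $Z(G)$, and you then pay for this with a second step: smooth trivialisation of $\bar u^{\ast}(G\to G/Z(G))$ to extract a globally defined $C^\alpha$ central part $z$, and the explicit structure $Z(G)^0\cong\R^a\times\T^b$ to finish via Theorem~\ref{theorem:smoothness}. What your longer route buys is robustness: the statement that every finite-dimensional Lie group covers a linear one is delicate when $Z(G)$ is not discrete (a quotient of the Heisenberg group by a lattice in its centre is not linear and covers no linear group), whereas your descent to $G/Z(G)$ followed by the abelian case handles such groups cleanly; the paper's route is shorter whenever the covering exists. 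Two points you should make explicit: first reduce to $G$ connected (possible since $\M$ is connected and $C(x,0)=e$, so $C$ takes values in $G^0$ and $u$ in a single coset of $G^0$), as this is what guarantees $\ker\mathrm{Ad}=Z(G)$ and that $\overline{\mathrm{Ad}}$ is an injective immersion; and when recovering $\bar u$ from $V=\overline{\mathrm{Ad}}\circ\bar u$, note that the continuity of $\bar u$ is what keeps you in a neighbourhood where the immersion is an embedding.
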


\begin{remark}
It is very likely that the infinite-dimensional case (e.g. cocycles with values in the group of diffeomorphisms) could also be treated with our methods by deriving source estimates with values in vector bundles whose fibers are Banach or Hilbert spaces. As a starting point, this would require to study pseudodifferential operators acting on infinite-dimensional vector spaces. This is left to future investigation.
\end{remark}

\begin{proof}
First of all, let us assume that $G$ is a \emph{linear Lie group}, i.e. it embeds into a $\mathrm{GL}_r(\R)$ for some $r \geq 0$. Writing
\[
\left. \dfrac{d}{dt} C(x,t)\right|_{t=0} =: A(x) \in C^\infty(\M,\mathfrak{g}),
\]
and using the fact that the group is linear, we obtain that $u$ satisfies the equation
\begin{equation}
\label{equation:simplification0}
(-X+A)u = 0.
\end{equation}
The operator $-X+A$ acts on $C^\infty(\M,\C^r) \rightarrow C^\infty(\M,\C^r)$ (where $r$ is such that $G \hookrightarrow \mathrm{GL}_r(\C)$). If $(\mathbf{e}_1, ..., \mathbf{e}_r)$ denotes a basis of $\C^r$, it is sufficient to show that $u \cdot \mathbf{e}_i$ is smooth (for any $i=1,...,r$). But $u \cdot  \mathbf{e}_i$ is Hölder-continuous and satisfies $(-X+A)(u \cdot  \mathbf{e}_i) = 0$. We are thus in the setting of Theorem \ref{theorem:source}. We claim that $\omega(-X+A)=0$. This follows from the observation that the propagator $U(t)$ of $-X+A$ acting on $L^\infty(\M,\C^r) \rightarrow L^\infty(\M,\C^r)$ is bounded by a constant independent of $t \in \R$. Now, a direct computation shows that $U(t) = e^{-tX} C(\cdot,t)$ and since $C(x,t) = u(\varphi_t x)u(x)^{-1}$, the bound on $L^\infty(\M,\C^r) \rightarrow L^\infty(\M,\C^r)$ is immediate.

In the general case, where $G$ might not be a linear Lie group, we use Ado's Theorem (see \cite[Conclusion 5.26]{Hall-15}): any Lie group covers a linear group. In other words, there exists a projection $\pi : G \rightarrow G_0$ (which is a local diffeomorphism) such that $G_0$ is linear. Let $u_0 := \pi u$ be the projection of the cocycle (which is also Hölder-continuous). As it $u_0$ satisfies the equation \eqref{equation:simplification0}, it is smooth. Hence $u$ is also smooth since $\pi$ is locally a diffeomorphism.
\end{proof}

\subsection{Rigidity of the foliation}

In this paragraph, we discuss the rigidity of the foliation of Anosov flows and show that they fit into the framework of radial estimates. In \cite{Hasselblatt-92}, Hasselblatt showed that if the Anosov splitting is smoother than some universal function of the constants entering in the definition \eqref{equation:anosov} of the Anosov flow, then the splitting is actually smooth. We will show below that this can be obtained as a straightforward consequence of radial source estimates. In the particular case of contact Anosov flows, sharper results have been obtained and we refer to \cite{Hurder-Katok-90, Benoist-Foulon-Labourie-90, Benoist-Foulon-Labourie-92}. As far as this paragraph is concerned, we will not assume any particular extra feature for the flow.

We denote by $\alpha$ the exponent of Hölder regularity of the foliation by strong stable/unstable leaves. Let $\pi_{E_u}, \pi_{E_s} \in C^\alpha(\M, T\M \otimes T^*\M)$ be the projection onto $E_u$ (resp. $E_s$), parallel to $E_s \oplus \R X$ (resp. $E_u \oplus \R X$) whose regularity is given by that of the foliation.

\begin{lemma}
\label{lemma:projecteurs}
$\mc{L}_X \pi_{E_u} =  \mc{L}_X \pi_{E_s} = 0.$
\end{lemma}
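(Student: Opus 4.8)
The plan is to reduce the statement to a single structural fact, the one underlying Hasselblatt's rigidity theorem: the hyperbolic splitting $T\M = \R X \oplus E_s \oplus E_u$ is \emph{invariant} under the flow. First I would record the invariance of each sub-bundle, namely $\dd_x\varphi_t(E_s(x)) = E_s(\varphi_t(x))$, $\dd_x\varphi_t(E_u(x)) = E_u(\varphi_t(x))$ and $\dd_x\varphi_t(\R X(x)) = \R X(\varphi_t(x))$ for all $t \in \R$ and $x \in \M$ --- this is part of the definition of an Anosov flow (equivalently, it follows from the dynamical characterization and uniqueness of $E_{s/u}$). Then I would transfer this to the projections: regarding $\pi_{E_u}, \pi_{E_s} \in C^\alpha(\M, T\M\otimes T^*\M)$ as $C^\alpha$ sections of $\mathrm{End}(T\M)$, with the natural pullback action $(\varphi_t^\ast s)(x) := (\dd_x\varphi_t)^{-1}\circ s(\varphi_t(x))\circ \dd_x\varphi_t$, a one-line check using the invariance of the three sub-bundles shows that $(\varphi_t^\ast\pi_{E_u})(x)$ is again the projection onto $E_u(x)$ parallel to $E_s(x)\oplus \R X(x)$, i.e. $\varphi_t^\ast\pi_{E_u} = \pi_{E_u}$ for every $t$, and likewise $\varphi_t^\ast\pi_{E_s} = \pi_{E_s}$.

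Next I would differentiate at $t = 0$. On sections of $\mathrm{End}(T\M)$, the Lie derivative $\mc{L}_X$ is precisely the infinitesimal generator of the one-parameter group $(\varphi_t^\ast)_{t\in\R}$ --- it is the first-order derivation $\X$ of the type \eqref{equation:lie} attached to the bundle $\mc{E} = \mathrm{End}(T\M)$ --- so the curve $t \mapsto \varphi_t^\ast\pi_{E_u}$ being constant, equal to $\pi_{E_u}$, forces $\mc{L}_X\pi_{E_u} = 0$, and identically $\mc{L}_X\pi_{E_s} = 0$, which is the claim.

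The one point requiring care --- the only obstacle, and a purely bookkeeping one --- is that $\pi_{E_u}$ and $\pi_{E_s}$ are merely $C^\alpha$, so $\mc{L}_X$ must be interpreted in the distributional sense, as the closure of $\X$ mapping $C^\alpha$ into $C^{\alpha-1}_\ast$. This causes no real difficulty: since $\varphi_t^\ast\pi_{E_u} = \pi_{E_u}$ holds pointwise on $\M$, the difference quotient $\tfrac{1}{t}(\varphi_t^\ast\pi_{E_u} - \pi_{E_u})$ is identically $0$, hence converges to $0$ in $\mathcal{D}'(\M, \mathrm{End}(T\M))$ as $t \to 0$, which by definition means $\mc{L}_X\pi_{E_u} = 0$. (Alternatively, pairing with a smooth test section $\psi$ and substituting $y = \varphi_t(x)$, one sees that $t \mapsto \langle \varphi_t^\ast\pi_{E_u}, \psi\rangle$ is a \emph{smooth}, constant function of $t$, the $t$-dependence being carried entirely by the smooth factors $\dd_x\varphi_t$ and the change-of-variables Jacobian, so its derivative at $t=0$, equal to $\langle \mc{L}_X\pi_{E_u}, \psi\rangle$, vanishes.) The genuine content of the lemma is thus only the flow-invariance of the hyperbolic splitting; everything else is routine.
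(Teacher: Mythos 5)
Your proof is correct, and it rests on exactly the same structural input as the paper's --- the $\dd\varphi_t$-invariance of the three sub-bundles $E_u$, $E_s$, $\R X$ --- but the execution is genuinely different. The paper argues infinitesimally: it picks local frames $U_i$ of $E_u$ and $S_i$ of $E_s$, writes $\pi_{E_u}=\sum_i U_i\otimes U_i^*$, uses invariance to get $\mc{L}_X U_i = U\,U_i$ for a H\"older matrix $U$, deduces $\mc{L}_X U_i^* = -U^\top U_i^*$ from compatibility with contractions, and checks that the two contributions to $\mc{L}_X\pi_{E_u}$ cancel. You instead work with the integrated statement $\varphi_t^*\pi_{E_u}=\pi_{E_u}$ for the natural conjugation action on sections of $\mathrm{End}(T\M)$ and differentiate a constant curve at $t=0$. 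Your route is shorter, frame-free, and makes the mechanism transparent (a projector determined by a flow-invariant splitting is fixed by the flow, period); the paper's frame computation is more pedestrian but has the side benefit of exhibiting the cocycle matrices $U$, $S$ explicitly. Your handling of the regularity issue is also adequate and, if anything, more careful than the paper's: the paper tacitly differentiates the H\"older frames along the flow, which is justified for the same reason your difference quotients converge (the $t$-dependence is carried entirely by the smooth maps $\dd_x\varphi_t$), whereas you make the distributional interpretation of $\mc{L}_X$ on $C^\alpha$ sections explicit, which is the sense in which the identity is actually used later via Theorem \ref{theorem:regularity}. No gap.
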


\begin{proof}
We denote by $d_s$ and $d_u$ the respective dimensions of $E_s$ and $E_u$. We fix a point $x_0$ and consider a local basis $U_1,...,U_{d_u}$ of $E_u$ and a local basis $S_1,...,S_{d_s}$ of $E_s$. We define the dual covectors by $U_i^*(U_i) = 1$ and for $j \neq i, k = 1,...,d_s$, $U_i^*(U_j)= U_i^*(X) = U_i^*(S_k) = 0$ (and $S_i^*$ is defined similarly). Then, locally around $x_0$, we have:
\[
 \pi_{E_u} = \sum_{i=1}^{d_u} U_i \otimes U_i^*, \hspace{2cm} \pi_{E_s} = \sum_{i=1}^{d_s} S_i \otimes S_i^*.
\]
Moreover, as $E_u$ and $E_s$ are invariant by the flow, we can write
\[
\mc{L}_X U_i(x) = U(x) U_i(x), \hspace{2cm} \mc{L}_X S_i(x) = S(x) S_i(x),
\]
for some $\alpha$-Hölder continuous matrices $U \in C^\alpha(\M, E_u), S \in C^\alpha(\M,E_s)$. Using the compatibility of the Lie derivative with the contractions, we obtain from $\mc{L}_X (U_i^*(U_i)) = \mc{L}_X 1=0$ (and the other relations) that:
\[
\mc{L}_X U_i^*(x) = -U(x)^\top U_i^*(x), \hspace{2cm} \mc{L}_X S^*_i(x) = -S^\top(x) S_i^*(x),
\]
where ${}^{\top}$ denotes the transpose. Hence:
\[
\mc{L}_X  \pi_{E_u} = \sum_{i=1}^{d_u} (\mc{L}_XU_i) \otimes U_i^* + U_i \otimes \mc{L}_X U_i^* = \sum_{i=1}^{d_u} (U \cdot U_i) \otimes  U_i^* - U_i \otimes (U^\top \cdot U_i^*) = 0.
\]
\end{proof}

In the following, $\X := \mc{L}_X$ is the Lie derivative acting on $T\M \otimes T^*\M$. Let us make the threshold $\omega(\X)$ more explicit. First of all, observe that
\[
e^{-t \X} : T_{\varphi_{-t}(x)}\M \otimes T^*_{\varphi_{-t}(x)}\M \rightarrow  T_{x}\M \otimes T^*_{x}\M
\]
is equal to the tensor map $\dd_{\varphi_{-t}(x)} \varphi_t \otimes \dd_{\varphi_{-t}(x)} \varphi_t^{-\top}$, where ${}^{-\top}$ denotes the inverse transpose. Hence:
\[
M(t,x) \leq \|\dd_{\varphi_{-t}(x)} \varphi_t\|\cdot\|\dd_{\varphi_{-t}(x)} \varphi_t^{-\top}\| \leq C  \|\dd_{\varphi_{-t}(x)} \varphi_t|_{E_u}\| \|\dd_{\varphi_{-t}(x)} \varphi_t^{-\top}|_{E_u^*}\|.
\]
Moreover, a simple calculation shows that
\[
\|\dd_{\varphi_{-t}(x)} \varphi_t^{-\top}|_{E_u^*}\| = \|\dd_x \varphi_{-t}|_{E_s}\|,
\]
(see also the end of the proof of Theorem \ref{theorem:source} where such an equality is derived). Hence:
\begin{equation}
\label{equation:cal}
M(t,x) \|\dd_x \varphi_{-t}|_{E^u}\|^\rho \leq C  \|\dd_{\varphi_{-t}(x)} \varphi_t|_{E_u}\| \|\dd_x \varphi_{-t}|_{E_s}\| \|\dd_x \varphi_{-t}|_{E^u}\|^\rho.
\end{equation}
This enables us to recover the classical result of \cite{Hasselblatt-92}:

\begin{theorem}
Assume $\M$ is $3$-dimensional and $X$ is volume preserving. Then $\omega(\X)\leq 2$. In particular, if the foliation is $C^{2+\delta}_*$-smooth, for some $\delta > 0$, then it is smooth.
\end{theorem}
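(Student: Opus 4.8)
The plan is to compute the threshold $\omega(\X)$ directly from the estimate \eqref{equation:cal}, exploiting the two special features of the statement: since $\dim\M = 3$ and $E_s$, $E_u$ are both non-trivial, they are \emph{line} bundles ($\dim E_s + \dim E_u = 2$), and $X$ preserves a volume form $\Omega$. Two elementary facts will handle the right-hand side of \eqref{equation:cal}. First, for an invertible linear map $L$ between one-dimensional normed spaces one has $\|L\|\,\|L^{-1}\| = 1$; applied to $L=\dd_x\varphi_{-T}|_{E_u}$ and its inverse $\dd_{\varphi_{-T}(x)}\varphi_T|_{E_u}$, this gives $\|\dd_{\varphi_{-T}(x)}\varphi_T|_{E_u}\| = \|\dd_x\varphi_{-T}|_{E_u}\|^{-1}$. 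Second, I claim volume preservation forces
\[
c^{-1}\le\|\dd_y\varphi_t|_{E_s}\|\cdot\|\dd_y\varphi_t|_{E_u}\|\le c,\qquad \forall\,y\in\M,\ t\in\R,
\]
for some $c\ge 1$.

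To see the second fact I would argue that $\iota_X\Omega$ is a $2$-form invariant under $\varphi_t$ (since $\varphi_t^*\Omega = \Omega$ and $(\varphi_t)_*X = X$), hence descends to a $\varphi_t$-invariant pairing of the line bundles $E_s$ and $E_u$ inside $T\M/\R X$, non-degenerate because $\iota_X\Omega(e_s,e_u) = \Omega(X,e_s,e_u)\ne 0$ for non-zero $e_s\in E_s$, $e_u\in E_u$. Writing $\dd_y\varphi_t\,e_\bullet(y) = \sigma_\bullet(t,y)\,e_\bullet(\varphi_t(y))$ for unit generators $e_\bullet$ of $E_\bullet$ (locally, or with absolute values if the line bundles are non-orientable), so that $|\sigma_\bullet(t,y)| = \|\dd_y\varphi_t|_{E_\bullet}\|$, invariance of $\iota_X\Omega$ reads $\beta(y) = \sigma_s(t,y)\sigma_u(t,y)\beta(\varphi_t(y))$ with $\beta(y):=\iota_X\Omega(e_s(y),e_u(y))$; since $|\beta|$ is continuous and nowhere zero on the compact $\M$, the two-sided bound follows.

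Substituting both facts into the right-hand side of \eqref{equation:cal} with $y=x$, $t=-T$, the product $\|\dd_{\varphi_{-T}(x)}\varphi_T|_{E_u}\|\cdot\|\dd_x\varphi_{-T}|_{E_s}\|$ equals $\|\dd_x\varphi_{-T}|_{E_u}\|^{-1}\cdot\|\dd_x\varphi_{-T}|_{E_s}\| \le c\,\|\dd_x\varphi_{-T}|_{E_u}\|^{-2}$, so
\[
M(T,x)\,\|\dd_x\varphi_{-T}|_{E^u}\|^{\rho}\le C\,\|\dd_x\varphi_{-T}|_{E_u}\|^{\rho-2}.
\]
By the Anosov condition \eqref{equation:anosov} applied to $E_u$ in backward time, $\|\dd_x\varphi_{-T}|_{E_u}\|\le Ce^{-\lambda T}$ uniformly in $x$; hence for every $\rho>2$ the right-hand side is $\le C'e^{-\lambda(\rho-2)T}$, and taking $\tfrac1T\log$ and letting $T\to\infty$ shows the supremum over $x$ of the limit in \eqref{equation:threshold} is $\le -\lambda(\rho-2)<0$. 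Thus every $\rho>2$ belongs to the set defining the threshold, i.e. $\omega_+(\X)\le 2$, and the time-reversed argument with $E_s$ and $E_u$ exchanged gives $\omega_-(\X)\le 2$ as well.

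For the ``in particular'' clause I would take $\delta\in(0,1)$ without loss of generality, so $C^{2+\delta}_*(\M) = C^{2+\delta}(\M)$; $C^{2+\delta}_*$-regularity of the foliation means $\pi_{E_u},\pi_{E_s}\in C^{2+\delta}(\M,T\M\otimes T^*\M)$, and Lemma \ref{lemma:projecteurs} gives $\X\pi_{E_u}=\X\pi_{E_s}=0\in C^\infty$. Since $2+\delta>2\ge\max(0,\omega_+(\X),\omega_-(\X))$, Theorem \ref{theorem:regularity} upgrades $\pi_{E_u},\pi_{E_s}$ to $C^\infty$, so $E_u$ and $E_s$ are smooth subbundles of $T\M$; being integrable, their leaves --- and with them the weak distributions $\R X\oplus E_u$ and $\R X\oplus E_s$ --- form smooth foliations. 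The only step carrying genuine content is the volume-preservation bound in the second paragraph; everything else is bookkeeping of base points and of contraction/expansion directions in \eqref{equation:cal} together with the black-box Theorem \ref{theorem:regularity}, and the one trap to avoid is conflating the norm of the Jacobian restricted to $E_u$ propagated backward with that of its inverse propagated forward.
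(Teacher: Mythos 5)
Your proof is correct and follows essentially the same route as the paper: both arguments start from \eqref{equation:cal} and use volume preservation together with $\dim E_s=\dim E_u=1$ to show that $\|\dd_x\varphi_{-T}|_{E_s}\|\cdot\|\dd_x\varphi_{-T}|_{E_u}\|$ is bounded above and below, which makes the right-hand side of \eqref{equation:cal} comparable to $\|\dd_x\varphi_{-T}|_{E_u}\|^{\rho-2}$, and then conclude via Lemma \ref{lemma:projecteurs} and Theorem \ref{theorem:regularity}. The only (cosmetic) difference is that the paper extracts this bound from the determinant identity $\int_0^T(r_s+r_u)\,\dd t=0$, whereas you use the $\varphi_t$-invariant $2$-form $\iota_X\Omega$ pairing the two line bundles — a slightly cleaner implementation of the same fact.
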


In the contact three-dimensional case, a sharper result was obtained by \cite{Hurder-Katok-90}.

\begin{proof}
Let $\mu$ be the smooth measure on $\M$ preserved by $X$. We choose a Riemannian metric $g$ on $\M$ so that the measure induced by $g$ is equal to $\mu$. Fix $x \in \M$ and let $v_{s,u} \in E_{s,u}(x)$ of norm $1$ (with respect to $g$). There exist functions $r_{s,u} : \M \rightarrow \R$ such that:
\[
|\dd_x \varphi_T(v_{s,u})| = \exp \left( \int_0^T r_{s,u}(\varphi_t(x))  \dd t\right),
\]
and these converge respectively to $0$ for $s$ and $+\infty$ for $u$ as $T \rightarrow \infty$ by the Anosov property \eqref{equation:anosov}. Also note that:
\[
|\det \dd_x \varphi_T| = 1 = \exp \left( \int_0^T r_{s}(\varphi_t(x)) + r_{u}(\varphi_t(x))  \dd t\right),
\]
that is
\begin{equation}
\label{equation:det}
\int_0^T r_{s}(\varphi_t(x)) + r_{u}(\varphi_t(x))  \dd t = 0.
\end{equation}
This gives using \eqref{equation:cal}
\[
\begin{split}
|M(T,x)| \|\dd_x \varphi_{-T}|_{E^u}\|^\rho  \leq C \exp\left( \int_{-T}^0 \left[(1-\rho) r_u(\varphi_t x) - r_s(\varphi_t(x))\right] \dd t\right).
\end{split}
\]
Taking $\rho = 2$, we see that this quantity is bounded using \eqref{equation:det} and it converges to $0$ as $T \rightarrow \infty$ for any $\rho > 2$, that is $\omega(\X) \leq 2$. By Lemma \ref{lemma:projecteurs}, the conclusion of the Theorem is immediate.
\end{proof}

Let us consider now the general case when $X$ does not preserve a volume. We can define global Lyapunov exponents $\lambda^{\max,\min}_{u,s}$ so that the following inequalities hold for all $t \geq 0$:
\[
\frac{1}{C} e^{-\lambda^{\max}_u t} \leq \| \dd\varphi_{-t}|_{E_u}\| \leq Ce^{-\lambda^{\min}_u t}, ~~  \frac{1}{C} e^{-\lambda^{\max}_s t} \leq \| \dd\varphi_{t}|_{E_s}\| \leq Ce^{-\lambda^{\min}_s t},
\]
where $C > 0$ is uniform. We then obtain using \eqref{equation:cal}:

\begin{theorem}
\label{theorem:rigidity-foliation}
Let $\X := \mc{L}_X$ be the Lie derivative acting on $T\M \otimes T^*\M$. Then:
\[
\omega(\X) \leq \dfrac{\lambda^{\max}_u + \lambda^{\max}_s}{\lambda^{\min}_u}.
\]
In particular, if the stable and unstable foliation is $C^\rho$-regular, where
\[
\rho > \max\left(\dfrac{\lambda^{\max}_u + \lambda^{\max}_s}{\lambda^{\min}_u},  \dfrac{\lambda^{\max}_u + \lambda^{\max}_s}{\lambda^{\min}_s} \right),
\]
then it is smooth.
\end{theorem}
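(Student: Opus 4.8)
The plan is to estimate the quantity inside the definition~\eqref{equation:threshold} of $\omega(\X)=\omega_+(\X)$ directly, by feeding the global Lyapunov bounds into the pointwise inequality~\eqref{equation:cal}, in precisely the manner the volume-preserving case was treated above --- the role played there by the algebraic identity~\eqref{equation:det} being taken over here by the two-sided exponential estimates defining $\lambda_{u,s}^{\max,\min}$.

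First I would bound each of the three factors on the right-hand side of~\eqref{equation:cal}, for $t\geq 0$ and uniformly in $x\in\M$. The backward flow contracts $E^u$, so $\|\dd_x\varphi_{-t}|_{E^u}\|\leq Ce^{-\lambda^{\min}_u t}$ and therefore $\|\dd_x\varphi_{-t}|_{E^u}\|^\rho\leq C^\rho e^{-\rho\lambda^{\min}_u t}$; the backward flow expands $E_s$ with maximal rate $\lambda^{\max}_s$, so $\|\dd_x\varphi_{-t}|_{E_s}\|\leq Ce^{\lambda^{\max}_s t}$; and the forward flow expands $E_u$ with maximal rate $\lambda^{\max}_u$, so $\|\dd_{\varphi_{-t}(x)}\varphi_t|_{E_u}\|\leq Ce^{\lambda^{\max}_u t}$. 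Substituting these into~\eqref{equation:cal} gives, uniformly in $x$,
\[
M(t,x)\,\|\dd_x\varphi_{-t}|_{E^u}\|^\rho \;\leq\; C\,e^{\left(\lambda^{\max}_u+\lambda^{\max}_s-\rho\,\lambda^{\min}_u\right)t}.
\]
Hence $\sup_{x\in\M}\lim_{T\to+\infty}\tfrac{1}{T}\log\bigl(M(T,x)\,\|\dd_x\varphi_{-T}|_{E^u}\|^\rho\bigr)\leq \lambda^{\max}_u+\lambda^{\max}_s-\rho\,\lambda^{\min}_u$, which is negative as soon as $\rho>(\lambda^{\max}_u+\lambda^{\max}_s)/\lambda^{\min}_u$; by the definition of the threshold this yields $\omega(\X)=\omega_+(\X)\leq(\lambda^{\max}_u+\lambda^{\max}_s)/\lambda^{\min}_u$.

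For the rigidity statement I would reverse time: replacing $X$ by $-X$ exchanges $E_s\leftrightarrow E_u$ and the corresponding exponents, so the identical computation bounds the backward threshold by $\omega_-(\X)\leq(\lambda^{\max}_u+\lambda^{\max}_s)/\lambda^{\min}_s$ (the numerator being symmetric in $u$ and $s$). Consequently, if the stable and unstable foliations are $C^\rho$-regular with
\[
\rho>\max\!\left(\frac{\lambda^{\max}_u+\lambda^{\max}_s}{\lambda^{\min}_u},\;\frac{\lambda^{\max}_u+\lambda^{\max}_s}{\lambda^{\min}_s}\right)\;\geq\;\max\bigl(0,\omega_+(\X),\omega_-(\X)\bigr),
\]
then the projectors $\pi_{E_u},\pi_{E_s}\in C^\rho(\M,T\M\otimes T^*\M)$ satisfy $\X\pi_{E_u}=\X\pi_{E_s}=0$ by Lemma~\ref{lemma:projecteurs}, so Theorem~\ref{theorem:regularity} applied with $\E=T\M\otimes T^*\M$ and $f=0$ forces $\pi_{E_u},\pi_{E_s}\in C^\infty(\M,T\M\otimes T^*\M)$; that is, the Anosov splitting is smooth.

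The one step deserving care is the bound on the forward factor $\|\dd_{\varphi_{-t}(x)}\varphi_t|_{E_u}\|\leq Ce^{\lambda^{\max}_u t}$: this operator is the inverse of $\dd_x\varphi_{-t}|_{E_u}$, and an upper bound on the operator norm of $\dd\varphi_{-t}|_{E_u}$ does not by itself control the norm of the inverse, which is governed by the conorm (smallest singular value). One should therefore check, using that the fibers are finite-dimensional together with a standard subadditivity argument (as in Lemma~\ref{lemma:sub}), that the conorm of $\dd_x\varphi_{-t}|_{E_u}$ is bounded below by $\tfrac{1}{C}e^{-\lambda^{\max}_u t}$ uniformly in $x$ --- which is exactly what the definition of $\lambda^{\max}_u$ is meant to encode. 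Beyond this, the argument is the same bookkeeping as in the volume-preserving case, the analytic content having already been spent in deriving~\eqref{equation:cal} and in Theorems~\ref{theorem:source} and~\ref{theorem:regularity}.
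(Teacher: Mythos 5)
Your proof is correct and is essentially the paper's own argument: the theorem is obtained by substituting the global Lyapunov bounds into \eqref{equation:cal}, collecting the exponent $\lambda^{\max}_u+\lambda^{\max}_s-\rho\lambda^{\min}_u$, reversing time for $\omega_-(\X)$, and concluding via Lemma \ref{lemma:projecteurs} and Theorem \ref{theorem:regularity}. Your closing caveat about needing the conorm (not just the operator norm) of $\dd_x\varphi_{-t}|_{E_u}$ to bound the inverse is a fair observation about how the exponents $\lambda^{\max}_{u,s}$ must be read, and matches the intended convention.
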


We could also obtain a sharper statement as in \cite{Hasselblatt-92} by letting the exponents depend on the point $x$. The threshold would then involve a supremum over all $x \in \M$. We also notice that similar results have just been obtained by \cite{Guillarmou-Poyferre-21} using paradifferential calculus.

\section{Stability estimates for the marked length spectrum}
\label{section:mls}

\subsection{Geodesic stretch, main result}

From now on, $SM := SM_{g_0}$ and the metric $g_0$ is fixed. We denote by $\otimes^2_S T^*M \rightarrow M$ the vector bundle of symmetric $2$-tensors on $M$, and we let 
\[
\pi_2^* : C^\infty(M,\otimes^2_S T^*M) \rightarrow C^\infty(SM), ~~~ \pi_2^* f(x,v) := f_x(v,v). 
\]
We denote by ${\pi_2}_*$ the formal adjoint of $\pi_2^\ast$, and by $\nabla$ the Levi-Civita connection of $g_0$. Then we define
\[
D := \mathcal{S} \nabla : C^\infty(M,T^*M) \rightarrow C^\infty(M,\otimes^2_S T^*M),
\]
where $\mc{S} : \otimes^2 T^*M \rightarrow \otimes^2_S T^*M$ denotes the symmetrization operator. Let $D^* = -\Tr(\nabla \cdot)$ be the formal adjoint of $D$. Any symmetric $2$-tensor $f \in C^\infty(M,\otimes^2_S T^*M)$ can be uniquely decomposed as
\[
f = Dp + h,
\]
where $D^*h = 0$ and $p \in C^\infty(M,T^*M)$. We say that $h$ is the \emph{solenoidal part} of $f$ and $Dp$ is the \emph{potential part}.

It is a classical result that for $c \in \mc{C}$, a free homotopy class and $g_0, g\in \mathrm{Met}_{\mathrm{An}}$,
\begin{equation}\label{eq:DL-MLS-ordre1}
\frac{L_g(c)}{L_{g_0}(c)} - 1 = \frac{1}{L_{g_0}(c)} \int_{\gamma_{g_0}(c)} \pi_2^\ast(g-g_0) ~\dd \gamma_{g_0}(c) + o( \| g - g_0 \|_{C^2} ).  
\end{equation}
The X-ray transform $I_2^{g_0}$ associated with $g_0$ is exactly the map which takes a symmetric two-tensor $f$ to 
\[
I_2^{g_0}f := I \circ \pi_2^* f
\]
defined on the free homotopy classes, where $I$ is the X-ray transform on $\M := SM$ defined in \eqref{equation:xray}. The relation \eqref{eq:DL-MLS-ordre1} proves that $g-g_0$ controls $L_g- L_{g_0}$. However for the purpose of studying the Burns-Katok conjecture, it is desirable to control $g-g_0$ by $L_g-L_{g_0}$. To find such a control, the first question that arises is whether $I^{g_0}_2 (g-g_0)$ controls $g-g_0$. It was the gist of \cite{Guillarmou-Lefeuvre-18} that this is essentially the \emph{only} obstruction. However the estimates in \cite{Guillarmou-Lefeuvre-18} required a very large number of derivatives, and our purpose here is to obtain essentially the same result but with less regularity. 

Potential tensors are always in the kernel of the X-ray transform due to the relation $\pi_2^*D = X \pi_1^*$ (where $\pi_1^* p(x,v) := p_x(v)$). It is customary to say that $I^{g_0}_2$ is \emph{injective} (or \emph{s-injective}) if for every $f\in C^\infty(M, \otimes^2_S T^\ast M) \cap \ker D^*$, if $I^{g_0}_2 f = 0$, then $f=0$. It turns out that for $g_0\in \mathrm{Met}_{\mathrm{An}}$, $I^{g_0}_2$ is injective provided:
\begin{itemize}
	\item $M$ is a surface \cite{Paternain-Salo-Uhlmann-14-1,Guillarmou-17-1},
	\item $\dim(M)\geq 3$ and $g_0$ has non-positive curvature \cite{Croke-Sharafutdinov-98},
	\item $\dim(M)\geq 3$ and $g_0$ is $C^k$-generic for some $k\gg1$ \cite{Cekic-Lefeuvre-21}.
\end{itemize}
It is conjectured that $I^{g_0}_2$ is actually injective for every $g_0 \in \mathrm{Met}_{\mathrm{An}}$. 

Instead of working directly with $L_g/L_{g_0}-1$ as in \cite{Guillarmou-Lefeuvre-18}, we will follow the tactic initiated in \cite{Guillarmou-Knieper-Lefeuvre-19}. Let $g_0, g\in \mathrm{Met}_{\mathrm{An}}$. It is known (see \cite[Appendix B]{Guillarmou-Knieper-Lefeuvre-19} for instance) that there exists an orbit-conjugacy of the geodesic flows i.e. a map
\[
\psi_g : SM=SM_{g_0} \rightarrow SM_g
\]
such that
\[
\dd \psi_{g}(X_{g_0}(z)) = a_g(z) X_g(\psi_g(z)), ~~~  \forall z \in SM,
\]
where $a_g$ is a function on $SM$ called the \emph{infinitesimal stretch}. The maps $\psi_g$ and $a_g$ are just $C^\nu$ for some $\nu > 0$. The map $\psi_g$ is not unique and $a_g$ is only defined up to a coboundary, namely a term of the form $X_{g_0} u$. The infinitesimal stretch is linked to the marked length spectrum by the following equality: for all $c \in \mc{C}$,
\[
L_g(c) = \int_{\gamma_{g_0}(c)} a_g(\varphi_t(z)) \dd t,
\]
where $z$ is an arbitrary point on $\gamma_{g_0}(c)$. The following lemma is well-known (see the discussion in \cite[Section 2.5]{Guillarmou-Knieper-Lefeuvre-19} for instance):

\begin{lemma}
The following statements are equivalent:
\begin{enumerate}
\item $L_g = L_{g_0}$,
\item The geodesic flows are conjugate i.e there exists $\psi_g$ a H\"older homeomorphism with $\psi_g \circ \varphi_t^{g_0} = \varphi_t^g \circ \psi_g$, for all $t \in \R$,
\item $a_g$ is cohomologous to the constant function $\mathbf{1}$.
\end{enumerate}
\end{lemma}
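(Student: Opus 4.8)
The plan is to establish the cycle of implications $(1)\Rightarrow(3)\Rightarrow(2)\Rightarrow(1)$. For $(1)\Rightarrow(3)$, I would start from the identity $L_g(c)=\int_{\gamma_{g_0}(c)}a_g(\varphi_t(z))\,\dd t$ defining the infinitesimal stretch, in which $t$ is the natural (arclength) time of the $g_0$-geodesic flow; with the same parametrisation $L_{g_0}(c)=\ell_{g_0}(\gamma_{g_0}(c))=\int_{\gamma_{g_0}(c)}\mathbf 1\,\dd t$, so $L_g=L_{g_0}$ is equivalent to $\int_{\gamma_{g_0}(c)}(a_g-\mathbf 1)\,\dd t=0$ for all $c\in\mc{C}$, that is, to $a_g-\mathbf 1$ lying in the kernel of the X-ray transform $I$ of the flow $\varphi^{g_0}_t$. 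Since this flow is Anosov and preserves the Liouville measure (of full support) it is transitive, and $a_g-\mathbf 1\in C^\nu(SM)$, so the Liv\v{s}ic Theorem \cite{Livsic-72} recalled above provides $u\in C^\nu(SM)$ with $a_g-\mathbf 1=X_{g_0}u$; in particular $X_{g_0}u=a_g-\mathbf 1\in C^\nu$, which is (3). (The reverse $(3)\Rightarrow(1)$ is immediate, a coboundary integrating to zero along every closed orbit, but it is not needed for the cycle.)

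\emph{$(3)\Rightarrow(2)$.} Writing $a_g=\mathbf 1+X_{g_0}u$ with $u,X_{g_0}u\in C^\nu$, I would use the $C^\nu$ orbit-conjugacy $\psi_g\colon SM\to SM_g$ recalled above, normalised by $\dd\psi_g(X_{g_0})=a_g\,(X_g\circ\psi_g)$. Integrating along orbits gives $\psi_g(\varphi^{g_0}_t z)=\varphi^g_{A(t,z)}(\psi_g(z))$ with $A(t,z)=\int_0^t a_g(\varphi^{g_0}_s z)\,\dd s$, and since $\tfrac{\dd}{\dd s}u(\varphi^{g_0}_s z)=(X_{g_0}u)(\varphi^{g_0}_s z)$ one gets $A(t,z)=t+u(\varphi^{g_0}_t z)-u(z)$. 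Then $\Psi(z):=\varphi^g_{-u(z)}(\psi_g(z))$ satisfies, by a one-line substitution, $\Psi\circ\varphi^{g_0}_t=\varphi^g_t\circ\Psi$ for all $t\in\R$; and $\Psi$ is Hölder-continuous (a composition of $C^\nu$ maps with the smooth flow) and a continuous bijection between the compact manifolds $SM$ and $SM_g$, hence a homeomorphism. This is (2).

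\emph{$(2)\Rightarrow(1)$.} A time-preserving conjugacy $\Psi$ takes every periodic orbit of $\varphi^{g_0}_t$ to a periodic orbit of $\varphi^g_t$ with the same minimal period. The orbit over $\gamma_{g_0}(c)$ has period $L_{g_0}(c)$ and its image projects to a closed $g$-geodesic whose $g$-length equals that period; since this geodesic again represents the class $c$ --- because $\Psi$ is homotopic to the identity (the orbit-conjugacy $\psi_g$ from structural stability is, and $s\mapsto\varphi^g_{-su}\circ\psi_g$ homotopes it to the map of the previous step), hence preserves free homotopy classes of closed orbits --- we conclude $L_g(c)=L_{g_0}(c)$ for every $c$. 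For the details of this marking-preservation I would refer to \cite[Section 2.5]{Guillarmou-Knieper-Lefeuvre-19}.

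The genuinely non-formal point is this last one: checking that the conjugacy respects the \emph{marking} of the length spectrum and not merely the underlying multiset of periods. I expect this to be where the real work lies; everything else reduces to a direct application of the Liv\v{s}ic Theorem and an elementary time-reparametrisation of the orbit conjugacy.
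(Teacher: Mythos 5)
Your argument is correct and is exactly the standard one: the paper itself gives no proof of this lemma, declaring it well-known and deferring to \cite[Section 2.5]{Guillarmou-Knieper-Lefeuvre-19}, and your cycle $(1)\Rightarrow(3)\Rightarrow(2)\Rightarrow(1)$ via Liv\v{s}ic plus the time-reparametrisation $\Psi=\varphi^g_{-u(\cdot)}\circ\psi_g$ is precisely the argument carried out there. The only step you assert rather than check is the injectivity of $\Psi$ (two points identified by $\Psi$ must lie on a common $g_0$-orbit, and on periodic orbits one rules out a multiple covering because $A(T',z)=T'$ shows minimal periods are preserved); this is routine but worth a line.
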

%It is natural to consider the cohomology class of the infinitesimal stretch minus one $[a_g-\mathbf{1}]$ as a faithful measure of the distance between the marked length spectra of the metrics $g_0$ and $g$. Let us be more precise. 
Since $a_g$ is only defined up to coboundaries, is it more sensible to measure its size modding out those coboundaries. More precisely, given $\alpha \in \R_+ \setminus \N$, we introduce the space of coboundaries $D^\alpha$ of regularity $\alpha$, namely:
\[
D^\alpha(SM) := \Big\{ X_{g_0} u ~\Big|~ u \in C^\alpha(SM_{g_0}), X_{g_0} u \in C^\alpha(SM) \Big\}.
\]
This is a closed subspace of $C^\alpha(SM)$ and we can therefore consider the quotient space $C^\alpha(SM)/D^\alpha(SM)$ endowed with the natural norm 
\[
\|[f]\|_{C^\alpha/D^\alpha} := \inf\Big\{ \|f + X_{g_0}u\|_{C^\alpha}\ \Big|  X_{g_0} u \in D^\alpha \Big\},
\]
where $[f]$ denotes an element in $C^\alpha/D^\alpha$. We now restate our main result:

\begin{theorem}
\label{theorem:mlsx}
Let $(M,g_0)$ be a smooth Anosov manifold and further assume $I^{g_0}_2$ is injective. For any $\eps>0$, there exists $\nu, C > 0$ such that the following holds. For any metric $g$ such that $\|g-g_0\|_{C^{3+\eps}} < 1/C$, there exists a $C^{4+\eps}$-diffeomorphism $\phi$, isotopic to the identity, such that
\[
\| \phi^* g - g_0 \|_{C^{\nu-1}} \leq C \inf_{\substack{u \in C^\nu(SM), \\ Xu \in C^\nu(SM)}} \|a_g-\mathbf{1} + Xu\|_{C^{\nu}}.
\]
\end{theorem}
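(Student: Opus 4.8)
The plan is to run the scheme of \cite{Guillarmou-Lefeuvre-18,Guillarmou-Knieper-Lefeuvre-19}, but to feed it with the Hölder--Zygmund source estimate of Theorem~\ref{theorem:source} in place of the Sobolev microlocal estimates used there; this substitution is exactly what makes the low regularity $C^{3+\eps}$ and the \emph{linear} nature of the bound possible. \textbf{Step 1: gauge fixing.} First I would put $g$ in a solenoidal gauge for $g_0$: an implicit function theorem based on the ellipticity of $D^*D$ (the gauge lemma of \cite{Guillarmou-Lefeuvre-18}, see also \cite{Cekic-Lefeuvre-21}) turns the smallness $\|g-g_0\|_{C^{3+\eps}}<1/C$ into a $C^{4+\eps}$-diffeomorphism $\phi$, isotopic to the identity, such that $h:=\phi^*g-g_0$ satisfies $D^*h=0$; writing $\phi^*g-g_0=\phi^*(g-g_0)+(\phi^*g_0-g_0)$ — the first term small in $C^{3+\eps}$ because pulling back by a controlled diffeomorphism does not lose regularity, the second because $g_0$ is smooth — one gets $\|h\|_{C^{3+\eps}}\le C\|g-g_0\|_{C^{3+\eps}}$. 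Since $\phi$ is isotopic to the identity, $L_g=L_{\phi^*g}$, so $a_g$ and $a_{\phi^*g}$ have the same X-ray transform; Liv\v{s}ic's theorem \cite{Livsic-72} then produces $u\in C^\nu(SM)$ with $a_g-a_{\phi^*g}=Xu$, hence $a_g-a_{\phi^*g}\in D^\nu(SM)$ and $\|[a_g-\mathbf 1]\|_{C^\nu/D^\nu}=\|[a_{\phi^*g}-\mathbf 1]\|_{C^\nu/D^\nu}$. Thus one may assume $g-g_0=h$ is solenoidal, and the target becomes $\|h\|_{C^{\nu-1}}\le C\,\|[a_g-\mathbf 1]\|_{C^\nu/D^\nu}$.

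\textbf{Step 2: stability of the normal operator.} Let $\Pi_0$ be the generalized X-ray transform of \cite{Guillarmou-17-1,Guillarmou-Lefeuvre-18}, built from the resolvents of $X$ at $0$ on $SM$, and set $\Pi:={\pi_2}_*\,\Pi_0\,\pi_2^*$. It is a pseudodifferential operator of order $-1$ on $M$, elliptic on $\ker D^*$; since $I_2^{g_0}$ is assumed injective, $\Pi$ is injective on solenoidal tensors, and an elliptic parametrix argument together with this injectivity gives, for $\nu>0$ small,
\[ \|h\|_{C^{\nu-1}}\le C\,\|\Pi h\|_{C^\nu}. \]

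\textbf{Step 3: linearization of the geodesic stretch and conclusion.} Using an explicit orbit conjugacy $\psi_g$ and a second-order expansion of the length functional, following \cite{Guillarmou-Knieper-Lefeuvre-19}, I would establish on $SM$ a decomposition
\[ a_g-\mathbf 1=\pi_2^*(g-g_0)+Xw_g+Q_g, \]
with $w_g,\,Xw_g\in C^\nu(SM)$ and a remainder obeying the quadratic-type bound $\|Q_g\|_{C^\nu}\le C\,\|g-g_0\|_{C^{3+\eps}}\,\|g-g_0\|_{C^{\nu-1}}$. Since $\Pi_0$ annihilates coboundaries (indeed $\Pi_0 X=0$) and $g-g_0=h$ is solenoidal, applying ${\pi_2}_*\Pi_0\pi_2^*$ to this identity yields
\[ \Pi h={\pi_2}_*\Pi_0(a_g-\mathbf 1)-{\pi_2}_*\Pi_0 Q_g. \]
The key point is that ${\pi_2}_*\Pi_0:C^\nu(SM)\to C^\nu(M)$ is bounded for small $\nu$: this reduces to the boundedness of the resolvents of $X$ at $0$ on the Hölder--Zygmund scale below the threshold $\omega_\pm(X)=0$ — the threshold vanishing because $X$ acts on the trivial bundle, exactly as in Theorem~\ref{theorem:smoothness} — and this is precisely what Theorem~\ref{theorem:source} supplies. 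Because $\Pi_0$ kills coboundaries, $\|{\pi_2}_*\Pi_0(a_g-\mathbf 1)\|_{C^\nu}=\|{\pi_2}_*\Pi_0(a_g-\mathbf 1+Xv)\|_{C^\nu}\le C\|a_g-\mathbf 1+Xv\|_{C^\nu}$ for every admissible $v$, so this term is $\le C\,\|[a_g-\mathbf 1]\|_{C^\nu/D^\nu}$; and $\|{\pi_2}_*\Pi_0 Q_g\|_{C^\nu}\le C\|Q_g\|_{C^\nu}\le C\,\|g-g_0\|_{C^{3+\eps}}\,\|h\|_{C^{\nu-1}}$. Combined with the estimate of Step~2 this gives
\[ \|h\|_{C^{\nu-1}}\le C\,\|[a_g-\mathbf 1]\|_{C^\nu/D^\nu}+C\,\|g-g_0\|_{C^{3+\eps}}\,\|h\|_{C^{\nu-1}}, \]
and for $\|g-g_0\|_{C^{3+\eps}}<1/C$ (after enlarging $C$) the last term is absorbed into the left-hand side; since $\phi^*g-g_0=h$ after the reduction of Step~1, this is the claimed bound.

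\textbf{Main obstacle.} I expect Step~3 to be the hard part: beyond extracting the decomposition of $a_g-\mathbf 1$ from the orbit conjugacy, one must prove the remainder estimate with one factor in the strong a priori norm $C^{3+\eps}$ and one in the weak norm $C^{\nu-1}$ (so that it is genuinely linearly absorbable, not merely $o(\cdot)$), in a regularity range where $\psi_g$ and $a_g$ are only Hölder of small exponent — this is where the improvement over \cite{Guillarmou-Lefeuvre-18,Guillarmou-Knieper-Lefeuvre-19} has to be earned. The other ingredient, the $C^\nu$-boundedness of ${\pi_2}_*\Pi_0$, is the payoff of Theorem~\ref{theorem:source}: in the Sobolev framework the analogous bound forces $\nu$, hence the number of derivatives demanded of $g$, to be much larger, because the corresponding $L^2$-threshold is strictly positive (cf. Appendix~\ref{appendix:b}).
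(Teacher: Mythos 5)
Your proposal is correct and follows essentially the same route as the paper: solenoidal gauge reduction, the stability estimate $\|h\|_{C^{\nu-1}}\lesssim\|\Pi_2 h\|_{C^{\nu}}$ from ellipticity plus injectivity of $I_2^{g_0}$, the $C^\nu$-boundedness of ${\pi_2}_*\Pi$ obtained from the radial source estimate (Lemma \ref{lemma:bound2}), and a second-order expansion of the stretch modulo coboundaries whose quadratic remainder is absorbed into the left-hand side. The one step you leave open --- the bound $\|Q_g\|_{C^\nu}\le C\|g-g_0\|_{C^{3+\eps}}\|g-g_0\|_{C^{\nu-1}}$ --- is exactly Proposition \ref{prop:sharp-estimate-stretch} of the paper (whose linearization carries a factor $\tfrac12\pi_2^*(g-g_0)$, missing from your decomposition but immaterial to the argument), and you have correctly identified it as the crux.
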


The exponent $\nu > 0$ has to be chosen small enough (proportional to $\eps$) as we shall see in the proof. We believe that this bound could help proving a local rigidity statement for the \emph{unmarked length spectrum} on surfaces. This is left to future investigation. The main novelty here is that this bound is independent of the dimension i.e. the metrics need only to be $C^{3+\eps}$-close whereas in previous works \cite{Guillarmou-Lefeuvre-18,Guillarmou-Knieper-Lefeuvre-19}, this regularity was increasing (linearly) with the dimension\footnote{namely, more than $3\dim(M)/2 + 9$ derivatives, which is worse than $3+\eps$ even for surfaces}. Moreover, the bound in Theorem \ref{theorem:mls} is linear whereas the bounds in \cite{Guillarmou-Lefeuvre-18, Guillarmou-Knieper-Lefeuvre-19} were non linear (but they involved the marked length spectrum directly, however). In particular, one can retrieve the bounds of \cite{Guillarmou-Lefeuvre-18, Guillarmou-Knieper-Lefeuvre-19} by simply applying the approximate Liv\v{s}ic theorem of \cite{Gouezel-Lefeuvre-19}: there exists a $\beta,\tau > 0$ such that $a_g-\mathbf{1} = Xu + h$, where $\|h\|_{C^\beta} \leq C \|L_g/L_{g_0}-\mathbf{1}\|_{\ell^\infty(\mc{C})}^\tau$ for some $\tau>0$. This precisely gives $\|[a_g-\mathbf{1}]\|_{C^\beta/D^\beta} \leq C \|L_g/L_{g_0}-\mathbf{1}\|_{\ell^\infty(\mc{C})}^\tau$. Using Theorem \ref{theorem:mls}, we retrieve the bound:
\[
\| \phi^* g - g_0 \|_{C^{\nu-1}} \leq C \|L_g/L_{g_0}-\mathbf{1}\|_{\ell^\infty(\mc{C})}^\tau.
\]
However, note that the exponent $\beta > 0$ is not well controlled (and might be $\ll 1$ actually), nor is $\tau$.

\subsection{Expansion of the stretch}

We will now study the dependence of the stretch on the metric $g$ close to $g_0$. According to \cite[Proposition 2.2]{Katok-Knieper-Pollicott-Weiss-89}, there exist $\nu_0\in (0,1)$ so that for $k\geq 1$, the map
\[
C^{2+k}(M,\otimes^2_S T^*M) \ni g \mapsto a_g \in C^{\nu_0}(SM),
\]
is $C^k$. One can give a lower bound on $\nu_0$ in terms of Lyapunov exponents of $g_0$. In particular, for $k=3$, it admits a Taylor expansion:
\begin{equation}
\label{equation:taylor}
a_g - \mathbf{1} = 0 + \mathbf{D}_{g}a_g|_{g=g_0}(g-g_0) + \mathbf{D}^2_{g}a_g|_{g=g_0} (g-g_0)^{\otimes 2} + \mc{O}_{C^{\nu_0}}(\|g-g_0\|_{C^5}^3). 
\end{equation}
(Here, distinct from the notation $\dd f$ which is the differential of a function $f$ on $SM$, the notation $\mathbf{D}_g a_{g} |_{g=g_0}(h)$ denotes the differential of the map $g \mapsto a_g$ at the point $g=g_0$, applied to the two-tensor $h$. By $\mathbf{D}_g a_{g} (h)$ we will denote the value of the differential at the point $g$.)

If we plugged this estimate in our machinery, we would obtain a result using $8-\eps$ derivatives on the metric, for some $\eps>0$ (and $9-\eps$ derivatives using the Taylor expansion to order $1$ instead). However, it turns out that modding out coboundaries, we can drastically improve the regularity of the stretch map:

\begin{proposition}\label{prop:sharp-estimate-stretch}
For $\eps>0$, there exists $C,\nu>0$ small enough such that:
\begin{equation}
\|a_g - \mathbf{1} -1/2 \times \pi_2^\ast(g-g_0)\|_{C^\nu/D^\nu} \leq C \|g-g_0\|_{C^{\nu-1}} \|g-g_0\|_{C^{3+\eps}}.
\end{equation}
\end{proposition}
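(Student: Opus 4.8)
The plan is to Taylor-expand $a_g$ in $h:=g-g_0$ to first order, recognise the linear term as $\tfrac12\pi_2^\ast h$ up to a coboundary, and show that the second-order remainder, \emph{once measured in the quotient} $C^\nu/D^\nu$, is already of size $\|h\|_{C^{\nu-1}}\|h\|_{C^{3+\eps}}$. The whole improvement over the naive use of \eqref{equation:taylor} comes from modding out coboundaries: the stretch map loses derivatives, but its class in $C^\nu/D^\nu$ is far more regular. I would first treat smooth $g$ and then reach a general $C^{3+\eps}$ metric by mollification.

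\emph{Linear term.} By \cite[Proposition 2.2]{Katok-Knieper-Pollicott-Weiss-89} the map $g\mapsto a_g$ is $C^{2}$ from $C^{4}(M,\otimes^2_S T^\ast M)$ to $C^{\nu_0}(SM)$, so for smooth $g$ one has the first-order Taylor expansion with integral remainder
\[
a_g-\mathbf 1=\mathbf D_g a_g|_{g=g_0}(h)+\int_0^1(1-t)\,\mathbf D^2_g a_g|_{g=g_0+th}\!\big(h^{\otimes2}\big)\,\dd t .
\]
The first-variation formula for the Riemannian length --- whose only surviving term is the variation of the metric, $\gamma_{g_0}(c)$ being a $g_0$-geodesic --- gives, for every free homotopy class $c$,
\[
\int_{\gamma_{g_0}(c)}\mathbf D_g a_g|_{g=g_0}(h)=\left.\frac{\dd}{\dd t}\right|_{t=0}L_{g_0+th}(c)=\tfrac12\int_{\gamma_{g_0}(c)}\pi_2^\ast h\ \dd\gamma_{g_0}(c),
\]
hence $I\big(\mathbf D_g a_g|_{g=g_0}(h)-\tfrac12\pi_2^\ast h\big)=0$. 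Since this function lies in $C^{\nu_0}(SM)$, Liv\v sic's theorem \cite{Livsic-72} produces $u_h\in C^{\nu_0}(SM)$ with $\mathbf D_g a_g|_{g=g_0}(h)=\tfrac12\pi_2^\ast h+Xu_h$, so, for $\nu\le\nu_0$, the linear contribution vanishes in $C^\nu/D^\nu$.

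\emph{Quadratic remainder (the main obstacle).} It remains to bound, uniformly over smooth base points $g'$ in a fixed $C^{3+\eps}$-ball around $g_0$, the bilinear quantity $\mathbf D^2_g a_g|_{g=g'}(h_1,h_2)$ in $C^\nu/D^\nu$. The crucial claim is the \emph{tame} estimate
\[
\big\|\mathbf D^2_g a_g|_{g=g'}(h_1,h_2)\big\|_{C^\nu/D^\nu}\le C\Big(\|h_1\|_{C^{\nu-1}}\|h_2\|_{C^{3+\eps}}+\|h_1\|_{C^{3+\eps}}\|h_2\|_{C^{\nu-1}}\Big),
\]
with $C$ depending only on $\|g'-g_0\|_{C^{3+\eps}}$. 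To prove it I would differentiate twice the transport relation defining $a_{g_t}$ along $g_t=g'+th$, using that the first variation of the orbit conjugacy solves a cohomological equation with source linear in $h$; following the second-variation computations of \cite{Guillarmou-Lefeuvre-18,Guillarmou-Knieper-Lefeuvre-19} this yields a formula of the schematic form
\[
\mathbf D^2_g a_g|_{g=g'}(h,h)=X\big(\text{quadratic expression in }h\big)+\Pi_2\big(Q(h,h)\big)+\cdots,
\]
where $Q(h,h)$ is a quadratic differential expression of the type $h\cdot\nabla h$ and $\Pi_2:={\pi_2}_\ast(X+0)^{-1}\pi_2^\ast$ is, up to lower order, the normal operator of $I_2^{g_0}$ --- a pseudodifferential operator of order $-1$ by \cite{Guillarmou-Lefeuvre-18}. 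Working with H\"older--Zygmund rather than Sobolev norms, the boundedness $C^{\nu-1}_\ast\to C^\nu_\ast$ of the resolvent and of $\Pi_2$ (elliptic of order $-1$ off the characteristic set; near $E^\ast_s\cup E^\ast_u$ one invokes Theorem~\ref{theorem:source}) is precisely what makes the scheme close in this scale of spaces. One then discards the coboundary $X(\cdots)$ and combines (i) a paraproduct bound $\|Q(h_1,h_2)\|_{C^{\nu-1}_\ast}\lesssim\|h_1\|_{C^{\nu-1}}\|h_2\|_{C^{2+\eps}}+(1\leftrightarrow2)$, (ii) the order $-1$ gain of $\Pi_2$, and (iii) the product estimate $\|fg\|_{C^\nu_\ast}\lesssim\|f\|_{C^\nu_\ast}\|g\|_{C^\nu_\ast}$ for $\nu>0$. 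The genuinely delicate point is the bookkeeping ensuring that at every step at most \emph{one} factor of $h$ must be measured in the negative-regularity space $C^{\nu-1}$; without this one only recovers the crude bound $\|h\|_{C^{3+\eps}}^2$, which is insufficient for the application in Theorem~\ref{theorem:mlsx}.

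\emph{Removing smoothness.} For $g$ merely $C^{3+\eps}$ I would set $g^\theta:=g_0+P_\theta(g-g_0)$ with $P_\theta$ a Littlewood--Paley truncation at scale $\theta$ and split
\[
a_g-\mathbf 1-\tfrac12\pi_2^\ast h=\Big(a_{g^\theta}-\mathbf 1-\tfrac12\pi_2^\ast(g^\theta-g_0)\Big)+\big(a_g-a_{g^\theta}\big)-\tfrac12\pi_2^\ast(g-g^\theta).
\]
The first bracket is controlled by the smooth case together with the tame estimate applied with $h=g^\theta-g_0$ along the segment from $g_0$ to $g^\theta$ (mollification being bounded on every $C^s$, and $\|g^\theta-g_0\|_{C^{3+\eps}}\lesssim\|g-g_0\|_{C^{3+\eps}}$ keeping the base points in a fixed ball), which gives $\lesssim\|g-g_0\|_{C^{\nu-1}}\|g-g_0\|_{C^{3+\eps}}$ with no dependence on $\theta$. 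For the last two terms one uses $\|a_g-a_{g^\theta}\|_{C^{\nu_0}}\lesssim\|g-g^\theta\|_{C^{3}}$ (the $C^1$-dependence of the stretch, \cite{Katok-Knieper-Pollicott-Weiss-89}) and $\|g-g^\theta\|_{C^{s}}\lesssim\theta^{3+\eps-s}\|g-g_0\|_{C^{3+\eps}}$ for $s\le 3+\eps$, which bound both by $\lesssim\theta^{\eps}\|g-g_0\|_{C^{3+\eps}}$; choosing $\theta=\|g-g_0\|_{C^{\nu-1}}^{1/\eps}$ and $\nu$ small enough (proportionally to $\eps$) that all exponents close turns this into $\lesssim\|g-g_0\|_{C^{\nu-1}}\|g-g_0\|_{C^{3+\eps}}$. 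Summing the three pieces yields the proposition. Apart from the tame bilinear estimate for $\mathbf D^2_g a_g$ in the quotient norm --- which is where the real work lies --- the argument is routine, the only care needed being the matching of the powers of $\theta$.
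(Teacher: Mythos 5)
Your identification of the linear term via the first variation of length plus Liv\v{s}ic is a valid alternative to the paper's direct computation (the paper instead shows $\mathbf{D}_{g'}a_{g\to g'}|_{g'=g}(h)=\tfrac12\pi_{2,g}^\ast h$ exactly, using the gauge-fixed Implicit Function Theorem), and your mollification bookkeeping at the end closes numerically. But the heart of the proposition — the bound on the second derivative of the stretch in the quotient norm, which you yourself flag as ``where the real work lies'' — is asserted rather than proved, and the mechanism you sketch for it is not the right one. The operator $\Pi_2={\pi_2}_*(\Pi_0+\Pi)\pi_2^*$ does not appear in the second variation of $a_g$ at all; it enters only later, in the proof of Theorem~\ref{theorem:mlsx}, when inverting the linearized problem. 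What actually appears (Lemma~\ref{lemma:derivative-stretch-mod-Dnu}) is the structural-stability operator $\mathbf{R}_g$ from Lemma~\ref{lemma:dllm}, i.e.\ the integral of $(\varphi_t^{Y_g})^*$ over the stable/unstable splitting. This is \emph{not} a pseudodifferential operator of order $-1$: it is bounded on $C^s$ only for $s<\alpha$, where $\alpha$ is the H\"older exponent of the splitting, and it provides no smoothing. Consequently there is no ``order $-1$ gain'' to exploit, and your claimed tame bilinear estimate with one factor measured in $C^{\nu-1}$ cannot be obtained by the route you describe. (Relatedly, Theorem~\ref{theorem:source} and Lemma~\ref{lemma:bound2} require \emph{positive} regularity $\rho>\omega_\pm(\X)\geq 0$, so they cannot be invoked to map $C^{\nu-1}\to C^\nu$ near the radial sets.)

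The paper's actual mechanism is different and you are missing its two key ingredients. First, the cocycle relation for orbit conjugacies modulo coboundaries (equations \eqref{equation:life} and \eqref{equation:structure}): differentiating $[a_{g_0\to g'}]$ at a \emph{moving} base point $g'=g$ only makes sense in $C^{\nu_0}/D^{\nu_0}$ because the gauge changes, and the composition rule $a_{g\to g'}\circ\Psi_{g_0\to g}\cdot a_{g_0\to g}=a_{g_0\to g'}\bmod D^{\nu_0}$ is what produces the clean formulas \eqref{eq:log-derivative-stretch} and \eqref{equation:seconde-derivative}; discarding the coboundary also requires checking that both $u$ and $Y_{g_0}u$ lie in $C^{\nu_0}$, which the gauge structure guarantees. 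Second, the resulting explicit second derivative is bounded by $\|h\|_{C^{1+\alpha}}\|h\|_{C^1}$ (Lemma~\ref{lemma:norme}) — a \emph{symmetric} estimate costing roughly one derivative per factor, with an extra loss $\nu=\alpha\nu_0$ coming from composing with the merely H\"older map $\Psi_{g_0\to g}$, a point your sketch ignores. The asymmetric form $\|h\|_{C^{\nu-1}}\|h\|_{C^{3+\eps}}$ is then recovered by interpolation, not proved directly. Since the quotient map $g\mapsto[a_{g_0\to g}]$ is already $C^2$ on a $C^3$-ball, the mollification step is superfluous once the correct second-derivative bound is in hand; as written, your argument reduces the proposition to an unproved estimate whose proposed justification would fail.
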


In order to prove Proposition \ref{prop:sharp-estimate-stretch}, we need to introduce some notations and recall some elements of the proof of \cite[Proposition 2.2]{Katok-Knieper-Pollicott-Weiss-89}. The maps $\psi_g$ introduced before take values in $SM_g$. It will be convenient to pullback everything to the same unit tangent bundle $SM = SM_{g_0}$. We thus introduce:
\[
\Phi_g : SM \to SM_g, ~~~~\Phi_{g}(x,v) = (x,v/|v|_g),
\]
and define $Y_g := \Phi_g^* X_g$, where $X_g$ is the geodesic vector field on $SM_{g}$. We also write $\beta_g := \Phi_g^* \alpha_g$, where $\alpha_g$ denotes the Liouville $1$-form on $SM_g$, that is for $(x,v) \in SM_g$ and $\xi \in T_{(x,v)}(SM_g)$, we have:
\begin{equation}
\label{equation:bam}
(\alpha_g)_{(x,v)}(\xi) = g_x(v, \dd\pi_{(x,v)}(\xi)),
\end{equation}
where $\pi : TM \to M$ denotes the projection. Each $1$-form $\beta_g$ comes with a contact distribution $\ker \beta_g = E^u(g) \oplus E^s(g)$ on $SM$. In the computations the following map will appear several times:
\[
\pi_{2,g}^\ast h : SM_{g_0} \owns (x,v) \to \frac{h_x(v,v)}{|v|_g^2}.
\]

Let us now recall the gist of the proof of structural stability. To perturb the vector field $Y_g$, the idea is to consider the map
\[
\Xi: ( Y_{g'}, \Psi , a) \mapsto \dd \Psi(Y_g) - a \times ( Y_{g'}\circ \Psi) \in C^{\nu_g}, 
\]
defined on 
\begin{itemize}
	\item $C^{k+1}$ vector fields $Y_{g'}$ close to $Y_g$,
	\item $C^{\nu_g}$ maps $\Psi$ from $SM$ to itself, close to identity, and $C^{1+\nu_g}$ along the flow of $Y_g$,
	\item $C^{\nu_g}$ functions $a$ close to $1$,
\end{itemize}
where $\nu_g$ has to be determined. With this topology, $\Xi$ is a $C^k$ map\footnote{The main point here is that the composition as a map $C^{k+1}\times C^\nu\to C^\nu$ is $C^k$ for $\nu\in (0,1)$, instead of $C^{k+1}$ for $\nu=0$. This is discussed at length in \cite{DeLaLLave-Obaya-98}.}, and the idea is to apply the Implicit Function Theorem (IFT). Indeed, if we have a solution of $\Xi(Y_{g'}, \Psi, a) = 0$, then we have 
\begin{equation}\label{eq:conjugation}
\dd \Psi(Y_g) = a \times ( Y_{g'} \circ \Psi), 
\end{equation}
which is exactly an orbit conjugation formula. However, solutions $(\Psi,a)$ are not unique but come in families. Indeed, if $u\in C^{\nu_g}$ and $Y_g u \in C^{\nu_g}$, setting 
\[
\Upsilon_{g,u}(x) := \varphi^{Y_g}_{u(x)}(x),
\]
we find:
\[
\dd \Upsilon_{g,u} (Y_g) = (1+ Y_g u) Y_g\circ \Upsilon_{g,u}. 
\]
Considering $\Psi \circ \Upsilon_{g,u}$, we then find that
\[
\begin{split}
\dd [\Psi \circ \Upsilon_{g,u}](Y_g) & =(1+ Y_g u) \dd_{\Upsilon_{g,u}(x)} \Psi ( Y_g \circ \Upsilon_{g,u}) \\
& = \Big[(1+ Y_g u) \times a \circ \Upsilon_{g,u}\Big] \times ( Y_{g'} \circ \Psi\circ \Upsilon_{g,u}),
\end{split}
\]
that is, the pair $(\Psi',a')$ defined by $\Psi' := \Psi \circ \Upsilon_{g,u}$ and $a' := (1+ Y_g u) \times a \circ \Upsilon_{g,u}$ also satisfies $\Xi(Y_{g'},\Psi',a') = 0$. Conversely, any pair $(\Psi',a')$ which is close to $(\mathrm{id},\mathbf{1})$ is of the form previously described, for some function $u$. Also observe that the previous discussion shows that given $a, u \in C^{\nu_g}(SM)$ such that $Y_{g_0} u\in C^{\nu_g}$, one has:
\begin{equation}
\label{equation:structure}
a = (1+Y_{g_0}u) a \circ \Upsilon_{g_0,u} \mod D^{\nu_g}
\end{equation}
This remark will be used later.

To solve the ambiguity in the definition of the orbit-conjugacy, one has to pick a gauge condition. For example, we can parameterize the Hölder maps close to identity as maps of the form
\[
x \mapsto \exp_x ( V(x)), 
\]
for some $V:SM \to T(SM)$, a $C^{\nu_g}$ vector field (the exponential map here is taken with respect to an arbitrary smooth fixed metric on $SM$). A practical gauge condition is given by 
\[
\text{ for } (x,v) \in SM,\quad V(x,v) \in \ker \beta_g = E^u(g) \oplus E^s(g). 
\]
With this condition, the Implicit Function Theorem applies to $\Xi$, and we find $C^k$ maps
\[
\begin{split}
C^{k+2}(M,\otimes^2_S T^*M) \owns g' &\mapsto Y_{g'} \in C^{k+1}(SM,T(SM)) \\
&\hspace{30pt}  \mapsto ( \Psi_{g\to g'}, a_{g\to g'} ) \in \mathrm{Homeo}^{\nu_g}(SM) \times C^{\nu_g}(SM). 
\end{split}
\]
Here, $\mathrm{Homeo}^{\nu_g}(SM)$ denotes homeomorphisms that are Hölder-continuous.
In order to apply the IFT, we only need to check that the differential $\mathbf{D}_{\Psi,a}\Xi$ is invertible when $Y_{g'}=Y_g$, $\Psi=id$ and $a=\mathbf{1}$. We find that
\[
\mathbf{D}_{\Psi,a}\Xi|_{g'=g,\Psi=1,a=1} (\mathfrak{V},\mathfrak{a}) = \mathcal{L}_{Y_g} \mathfrak{V}  -  \mathfrak{a}Y_g. 
\]
Since $\mathcal{L}_{Y_g} \beta_g = 0$, $\ker \beta_g$ is preserved by $\mathcal{L}_{Y_g}$, and we find:

\begin{lemma}
\label{lemma:dllm}
The differential $\mathbf{D}_{\Psi,a}\Xi|_{g'=g,\Psi=1,a=1}$ is invertible. If $W$ is a vector field, we can decompose $W = c Y_g + W^{\perp}$, with $W^\perp \in \ker \beta_g$. Then
\[
\mathbf{D}_{\Psi,a}\Xi|_{g'=g,\Psi=1,a=1}^{-1}(W) = (\mathfrak{V},\mathfrak{a}),
\]
where $\mathfrak{a}= - c$, and decomposing $W^\perp = W^u + W^s$ along $E^u(g)\oplus E^s(g)$, 
\[
\mathfrak{V} = - \int_0^{+\infty} (\varphi^{Y_g}_t)^\ast W^u dt + \int_{-\infty}^0 (\varphi^{Y_g}_t)^\ast W^s dt := \mathbf{R}_g W^{\perp}. 
\]
There exists $\alpha\in(0,1)$ so that the operator $\mathbf{R}_g : C^s(SM,\ker \beta_g) \mapsto C^s(SM,\ker \beta_g)$ is continuous for $s\in [0,\alpha)$. We have a lower bound 
\[
\alpha \geq \min\left(  \frac{\lambda_s^{\min}}{\lambda_s^{\max}}, \frac{\lambda_u^{\min}}{\lambda_u^{\max}} \right).
\]
\end{lemma}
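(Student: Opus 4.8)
The plan is to invert the operator $\mathbf{D}_{\Psi,a}\Xi|_{g'=g,\Psi=1,a=1}\colon(\mathfrak{V},\mathfrak{a})\mapsto\mathcal{L}_{Y_g}\mathfrak{V}-\mathfrak{a}Y_g$ by hand, exploiting the splitting $T(SM)=\R Y_g\oplus\ker\beta_g=\R Y_g\oplus E^u(g)\oplus E^s(g)$, and then to read off the regularity of the resulting inverse. First I would record two structural facts. (i) $\beta_g(Y_g)\equiv1$ and $\mathcal{L}_{Y_g}\beta_g=0$, both inherited from the analogous statements for the Liouville form $\alpha_g$ and the geodesic field $X_g$ via the pullback by $\Phi_g$; consequently $\mathcal{L}_{Y_g}$ preserves $\ker\beta_g$, since $\beta_g(\mathcal{L}_{Y_g}\mathfrak{V})=Y_g(\beta_g(\mathfrak{V}))-(\mathcal{L}_{Y_g}\beta_g)(\mathfrak{V})$ vanishes when $\mathfrak{V}\in\ker\beta_g$. (ii) The sub-bundles $E^u(g),E^s(g)$ are invariant under $\dd\varphi^{Y_g}_t$ (the flow of $Y_g$ is orbit-equivalent to the Anosov geodesic flow), hence $\mathcal{L}_{Y_g}$ preserves each of them.

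Given a Hölder vector field $W$, write $W=cY_g+W^u+W^s$ with $c:=\beta_g(W)$ and $W^\perp:=W^u+W^s\in\ker\beta_g$ its decomposition along $E^u(g)\oplus E^s(g)$. Seeking $(\mathfrak{V},\mathfrak{a})$ with $\mathfrak{V}\in\ker\beta_g$, the $\R Y_g$-component of the equation forces $\mathfrak{a}=-c$ and the $\ker\beta_g$-component becomes $\mathcal{L}_{Y_g}\mathfrak{V}=W^\perp$, which by (ii) decouples into $\mathcal{L}_{Y_g}\mathfrak{V}^u=W^u$ and $\mathcal{L}_{Y_g}\mathfrak{V}^s=W^s$. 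Using $\tfrac{\dd}{\dd t}(\varphi^{Y_g}_t)^*Z=(\varphi^{Y_g}_t)^*\mathcal{L}_{Y_g}Z=\mathcal{L}_{Y_g}\bigl((\varphi^{Y_g}_t)^*Z\bigr)$ together with the contractions $\|\dd\varphi^{Y_g}_{-t}|_{E^u}\|\to0$ and $\|\dd\varphi^{Y_g}_{t}|_{E^s}\|\to0$ as $t\to+\infty$, the fundamental theorem of calculus gives
\[
\mathfrak{V}^u=-\int_0^{+\infty}(\varphi^{Y_g}_t)^*W^u\,\dd t,\qquad \mathfrak{V}^s=\int_{-\infty}^0(\varphi^{Y_g}_t)^*W^s\,\dd t,
\]
so $\mathbf{R}_gW^\perp:=\mathfrak{V}^u+\mathfrak{V}^s$. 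Surjectivity of $\mathbf{D}_{\Psi,a}\Xi$ is precisely this formal check; injectivity follows because $\mathcal{L}_{Y_g}\mathfrak{V}-\mathfrak{a}Y_g=0$ with $\mathfrak{V}\in\ker\beta_g$ forces, after pairing with $\beta_g$, $\mathfrak{a}=0$, whence $(\varphi^{Y_g}_t)^*\mathfrak{V}^u$ and $(\varphi^{Y_g}_t)^*\mathfrak{V}^s$ are constant in $t$ yet tend to $0$ at $\pm\infty$, so $\mathfrak{V}=0$.

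The substantial point — the only place where the Lyapunov exponents enter — is the $C^s$-continuity of $\mathbf{R}_g$ for $s\in[0,\alpha)$. Two ingredients are needed: first, the projections $W\mapsto W^u$, $W\mapsto W^s$ along $E^u(g)\oplus E^s(g)$ must be bounded on $C^s$, which is exactly the $C^s$-regularity of the Anosov splitting of $\varphi^{Y_g}$ and, by the standard invariant-section argument, holds down to $\min(\lambda_s^{\min}/\lambda_s^{\max},\lambda_u^{\min}/\lambda_u^{\max})$; second, one needs the decay estimate
\[
\bigl\|(\varphi^{Y_g}_t)^*W^u\bigr\|_{C^s}\lesssim e^{(s\lambda_u^{\max}-\lambda_u^{\min})t}\|W^u\|_{C^s}\quad(t\ge0),
\]
and symmetrically $\|(\varphi^{Y_g}_t)^*W^s\|_{C^s}\lesssim e^{(s\lambda_s^{\max}-\lambda_s^{\min})|t|}\|W^s\|_{C^s}$ for $t\le0$: writing $(\varphi^{Y_g}_t)^*W^u(x)=\dd_{\varphi_t x}\varphi_{-t}\bigl(W^u(\varphi_t x)\bigr)$, the amplitude factor $\dd\varphi_{-t}|_{E^u}$ contributes the decay $e^{-\lambda_u^{\min}t}$ while composition with $\varphi_t$ costs at most $(\sup\|\dd\varphi_t\|)^s\lesssim e^{s\lambda_u^{\max}t}$ on the $C^s$-seminorm. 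For $s<\min(\lambda_s^{\min}/\lambda_s^{\max},\lambda_u^{\min}/\lambda_u^{\max})=:\alpha$ both exponents are negative, the defining integrals converge absolutely in $C^s$, and one gets both the continuity and the stated lower bound on $\alpha$. I expect the bookkeeping in this last estimate — in particular verifying that restricting the smooth cocycle $\dd\varphi_{-t}$ to the merely Hölder-continuous invariant bundle $E^u$ (resp. $E^s$) does not spoil the $C^s$-seminorm bound beyond the rate $e^{-\lambda_u^{\min}t}$ (resp. $e^{-\lambda_s^{\min}|t|}$) — to be the main technical obstacle; the remainder is linear algebra and the fundamental theorem of calculus.
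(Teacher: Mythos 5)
Your proposal is correct and follows essentially the same route as the paper: the paper computes $\mathbf{D}_{\Psi,a}\Xi|_{g'=g,\Psi=1,a=1}(\mathfrak{V},\mathfrak{a})=\mathcal{L}_{Y_g}\mathfrak{V}-\mathfrak{a}Y_g$, observes that $\mathcal{L}_{Y_g}\beta_g=0$ makes $\ker\beta_g$ invariant so the equation splits exactly as you describe, and delegates the H\"older threshold to the arguments of Katok--Knieper--Pollicott--Weiss (p.~593, with $\phi$ replaced by $\phi^m$), which is precisely the convergence estimate $s\lambda_u^{\max}<\lambda_u^{\min}$, $s\lambda_s^{\max}<\lambda_s^{\min}$ you derive. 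The cocycle/iteration point you flag as the main obstacle is indeed the content of that citation, and your bookkeeping matches it.
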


This lower bound for $\alpha$ can be extracted from the arguments page 593 in \cite{Katok-Knieper-Pollicott-Weiss-89}, and replacing $\phi$ by $\phi^m$ with $m$ large. This proves that $C^2\ni g \mapsto \alpha$ is locally uniformly positive. We will take $\nu_g = \alpha/2$ for example. For our purposes, we will need to understand how the vector field $Y_{g'}$ varies with $g'$:

\begin{lemma}
\label{lemma:diff}
We have: 
\[
\mathbf{D}_{g'} Y_{g'}|_{g'=g} (h) = -\frac{1}{2} (\pi_{2,g}^\ast h) Y_g + W_g^\perp(h),
\]
for some $W_g^\perp(h) \in \ker \beta_g$. Furthermore, $\|(\pi_{2,g}^\ast h)\|_{C^\alpha} \lesssim \|h\|_{C^\alpha}$ and $\|W_g^\perp(h)\|_{C^\alpha} \lesssim \|h\|_{C^{1+\alpha}}$ for all $\alpha \geq 0$.
\end{lemma}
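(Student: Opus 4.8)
The plan is to compute $\mathbf{D}_{g'}Y_{g'}|_{g'=g}(h)$ by decomposing it along the splitting $T(SM)=\R Y_g\oplus\ker\beta_g$. The transverse component will simply be \emph{defined} to be the remainder, namely $W_g^\perp(h):=\mathbf{D}_{g'}Y_{g'}|_{g'=g}(h)+\tfrac12(\pi_{2,g}^\ast h)Y_g$, so that it automatically lies in $\ker\beta_g$ once we have checked that the component of $\mathbf{D}_{g'}Y_{g'}|_{g'=g}(h)$ along $Y_g$ --- which equals $\beta_g\big(\mathbf{D}_{g'}Y_{g'}|_{g'=g}(h)\big)$ since $\beta_g(Y_g)=1$ --- is exactly $-\tfrac12\,\pi_{2,g}^\ast h$. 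So everything reduces to this one scalar identity.

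To obtain it I would differentiate the identity $\beta_{g'}(Y_{g'})\equiv 1$ at $g'=g$ in the direction $h$. This identity holds because $\beta_{g'}=\Phi_{g'}^\ast\alpha_{g'}$, $Y_{g'}=\Phi_{g'}^\ast X_{g'}$, and by \eqref{equation:bam} one has $\alpha_{g'}(X_{g'})(x,w)=g'_x(w,w)=|w|_{g'}^2=1$ on $SM_{g'}$ (since $\dd\pi X_{g'}=w$). The product rule then gives $\beta_g\big(\mathbf{D}_{g'}Y_{g'}|_{g'=g}(h)\big)=-\big(\mathbf{D}_{g'}\beta_{g'}|_{g'=g}(h)\big)(Y_g)$, so it suffices to prove $\big(\mathbf{D}_{g'}\beta_{g'}|_{g'=g}(h)\big)(Y_g)=\tfrac12\,\pi_{2,g}^\ast h$. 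For this I would first record that, by \eqref{equation:bam} and $\pi\circ\Phi_g=\pi$, one has on $SM$ the explicit formula $(\beta_g)_{(x,v)}(\eta)=|v|_g^{-1}\,g_x(v,\dd\pi\eta)$. Differentiating in $g'$ produces two terms: one from the variation of the normalisation $|v|_{g'}^{-1}$, using $\tfrac{d}{ds}|v|_{g+sh}^{-1}|_{s=0}=-\tfrac{h_x(v,v)}{2|v|_g^3}$, and one from the variation of $g'_x(v,\dd\pi\eta)$, giving $|v|_g^{-1}h_x(v,\dd\pi\eta)$. Evaluating the resulting $1$-form on $Y_g$ and using $\dd\pi(Y_g)=v/|v|_g$ (because $\dd\pi X_g=w$ on $SM_g$), the two contributions become $-\tfrac{h_x(v,v)}{2|v|_g^2}$ and $\tfrac{h_x(v,v)}{|v|_g^2}$, whose sum is $\tfrac{h_x(v,v)}{2|v|_g^2}=\tfrac12\,\pi_{2,g}^\ast h$, as desired.

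For the norm bounds: $\|\pi_{2,g}^\ast h\|_{C^\alpha}\lesssim\|h\|_{C^\alpha}$ is immediate, as $(x,v)\mapsto h_x(v,v)/|v|_g^2$ is obtained from $h$ by a fibrewise bilinear operation followed by division by the smooth positive function $|v|_g^2$, so no derivative of $h$ enters. For $W_g^\perp(h)$ it is enough to show $\|\mathbf{D}_{g'}Y_{g'}|_{g'=g}(h)\|_{C^\alpha}\lesssim\|h\|_{C^{1+\alpha}}$, since $\|(\pi_{2,g}^\ast h)Y_g\|_{C^\alpha}\lesssim\|h\|_{C^\alpha}\le\|h\|_{C^{1+\alpha}}$; this loss of exactly one derivative is visible in local coordinates, where $Y_{g'}$ is a universal smooth function of $(x,v)$, of $g'(x)$, and of the first derivatives $\partial g'(x)$ (the geodesic spray of $g'$ involves its Christoffel symbols, and $\Phi_{g'}$ only involves $|v|_{g'}$), so $\mathbf{D}_{g'}Y_{g'}|_{g'=g}(h)$ is linear in $(h,\partial h)$ with $C^\infty$ coefficients --- here we use that $g_0$, hence $g$, is smooth.

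The computation is short; the delicate points are purely bookkeeping. One must not forget that \emph{both} the reparametrisation $|v|_{g'}^{-1}$ and the pointwise metric contribute when differentiating $\beta_{g'}$ (varying only the metric would miss the $-1/2$ coming from the normalisation and spoil the final coefficient), and one must check that the product-rule step is legitimate in the relevant Hölder topologies --- which follows from the differentiability of $g'\mapsto(\beta_{g'},Y_{g'})$ already recalled from \cite{Katok-Knieper-Pollicott-Weiss-89}, together with the explicit formulas above.
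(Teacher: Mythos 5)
Your proof is correct, and for the main identity it coincides with the paper's: both differentiate $\imath_{Y_{g'}}\beta_{g'}\equiv 1$ at $g'=g$, use $\beta_g(\dot Y_g)=-\dot\beta_g(Y_g)$, and evaluate $\dot\beta_g(Y_g)$ from the explicit formula \eqref{equation:bam} together with $\dd\pi(Y_g)=v/|v|_g$; your two contributions $-\tfrac{h(v,v)}{2|v|_g^2}$ and $\tfrac{h(v,v)}{|v|_g^2}$ are exactly the two terms in the paper's displayed computation. The only genuine divergence is in the estimate for $W_g^\perp(h)$. The paper also differentiates the second contact identity $\imath_{Y_g}\dd\beta_g=0$, characterizes $W_g^\perp(h)$ by $\imath_{W_g^\perp(h)}\dd\beta_g=-\imath_{Y_g}\dd\dot\beta_g$ using the non-degeneracy of $\dd\beta_g$ on $\ker\beta_g$, and reads off that $h\mapsto W_g^\perp(h)$ is a first-order operator with $C^2$ coefficients when $g\in C^3$. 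You instead bound $\dot Y_g(h)$ directly by noting that $Y_{g'}$ is a smooth function of $(x,v,g'(x),\partial g'(x))$ in coordinates and then subtract the tangential part; this is equally valid and arguably more elementary, though the intrinsic route has the advantage of producing $W_g^\perp(h)$ without reference to the splitting being well-behaved in coordinates. One small caution: you justify the regularity of the coefficients by invoking smoothness of $g$, whereas the paper deliberately tracks that the coefficients are only as regular as $\partial g$ (hence $C^2$ for $g\in C^3$); this matters later (Lemma \ref{lemma:norme} and Proposition \ref{prop:sharp-estimate-stretch}) where the implied constants must be uniform over a $C^3$-neighbourhood of $g_0$. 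Your own observation that the coefficients depend only on $(g,\partial g)$ already gives this uniformity for $\alpha\leq 2$, so nothing is broken, but the appeal to smoothness of $g$ should be dropped in favour of that observation.
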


We will limit the regularity $\alpha \leq 2$, because we will use a Taylor-expansion for $C^3$-metrics, thus $h$ will be $C^3$. 

\begin{proof}
We start with the identities $\imath_{Y_g} \beta_g = 1, \imath_{Y_g} d \beta_g = 0$. Differentiating with respect to $g$ in the direction $h \in C^3(M,\otimes^2_S T^*M)$, and writing $\dot{Y}_g := \mathbf{D}_{g'} Y_{g'}|_{g'=g} (h)$, we get:
\[
\imath_{\dot{Y}_g} \beta_g + \imath_{Y_g} \dot{\beta}_g = 0, ~~~ \imath_{\dot{Y}_g} d \beta_g + \imath_{Y_g} d \dot{\beta}_g = 0.
\]
Decomposing $\dot{Y}_g = c_g(h)Y_g + W_g^\perp(h)$, where $c_g(h)$ is a function on $SM$ and $W_g^\perp(h) \in \ker \beta_g$. We have, using \eqref{equation:bam}, for $(x,v) \in SM$:
\[
\begin{split}
c_g(h) = \beta_g(\dot{Y}_g) = - \dot{\beta}_g(Y_g) & =- \left[ -\dfrac{h_x(v,v)}{2g_x(v,v)^{3/2}} g_x(v, \dd \pi_{(x,v)}(Y_g)) + \dfrac{1}{|v|_g} h_x(v, \dd \pi_{(x,v)}(Y_g)) \right]\\
& = - \left[ -\dfrac{h_x(v,v)}{2g_x(v,v)^{3/2}} g_x\left(v, \frac{v}{|v|_g}\right) + \dfrac{1}{|v|_g} h_x\left(v, \frac{v}{|v|_g}\right) \right]\\
& = -1/2 \times h_x(v,v) /|v|_g^2 = -1/2 \times \pi_{2,g}^\ast h(x,v). 
\end{split}
\]

It remains to evaluate the H\"older norm of $W^\perp_g(h)$. By definition, 
\[
\imath_{W_g^\perp(h)} d \beta_g = \imath_{\dot{Y}_g} d \beta_g =-  \imath_{Y_g} d \dot{\beta}_g
\]
characterizes entirely $W_g(h)$ since $d\beta_g$ is a non-degenerate $2$-forms on $\ker \beta_g$. We conclude that $h\mapsto W_g^\perp(h)$ is a linear differential operator of order $1$ with $C^2$ coefficients for $g\in C^3$. The result follows.
\end{proof}

Putting together the lemmata, we get
\begin{lemma}
\label{lemma:derivative-0}
For $h\in C^3(M,\otimes^2_S T^\ast M)$, 
\begin{equation}\label{eq:derivative-at-g-conjugation}
\mathbf{D}_{g'} a_{g\to g'}|_{g'=g}(h) = \frac{1}{2}\pi_{2,g}^\ast h,\quad \mathbf{D}_{g'} \Psi_{g\to g'}|_{g'=g}(h) = \mathbf{R}_g W_g^\perp(h). 
\end{equation}
\end{lemma}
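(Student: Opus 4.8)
The plan is to read off both identities from the Implicit Function Theorem that produced the maps $g'\mapsto(\Psi_{g\to g'},a_{g\to g'})$, combined with the two preceding lemmata. By construction $\Xi(Y_{g'},\Psi_{g\to g'},a_{g\to g'})=0$ for all $g'$ in a neighbourhood of $g$, with $(\Psi_{g\to g},a_{g\to g})=(\mathrm{id},\mathbf 1)$, and both $g'\mapsto Y_{g'}$ and $Y_{g'}\mapsto(\Psi_{g\to g'},a_{g\to g'})$ are $C^k$ in the topologies recalled above; for $k=1$ it suffices that $g'\in C^3$, which is precisely why we restrict to $h\in C^3$. Differentiating the relation $\Xi=0$ at $g'=g$ and applying the chain rule through $Y_{g'}$ (using invertibility of $\mathbf D_{\Psi,a}\Xi$, i.e. Lemma \ref{lemma:dllm}) gives
\[
\mathbf D_{g'}(\Psi_{g\to g'},a_{g\to g'})\big|_{g'=g}(h)=-\big(\mathbf D_{\Psi,a}\Xi|_{g'=g,\Psi=1,a=1}\big)^{-1}\circ\mathbf D_{Y}\Xi|_{g'=g,\Psi=1,a=1}\big(\mathbf D_{g'}Y_{g'}|_{g'=g}(h)\big).
\]

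Next I would evaluate the two remaining differentials. In $\Xi(Y_{g'},\Psi,a)=\dd\Psi(Y_g)-a\times(Y_{g'}\circ\Psi)$ only the last term involves $Y_{g'}$, and at $\Psi=\mathrm{id}$, $a=\mathbf 1$ its differential in a direction $\dot Y$ is simply $-\dot Y$; hence the formula above collapses to
\[
\mathbf D_{g'}(\Psi_{g\to g'},a_{g\to g'})\big|_{g'=g}(h)=\big(\mathbf D_{\Psi,a}\Xi|_{g'=g,\Psi=1,a=1}\big)^{-1}\big(\mathbf D_{g'}Y_{g'}|_{g'=g}(h)\big).
\]
By Lemma \ref{lemma:diff}, $\mathbf D_{g'}Y_{g'}|_{g'=g}(h)=-\tfrac12(\pi_{2,g}^\ast h)\,Y_g+W_g^\perp(h)$, which is exactly the decomposition $W=cY_g+W^\perp$ of Lemma \ref{lemma:dllm} with $c=-\tfrac12\pi_{2,g}^\ast h$ and $W^\perp=W_g^\perp(h)\in\ker\beta_g$. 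Plugging this into the explicit inverse of Lemma \ref{lemma:dllm} yields $\mathfrak a=-c=\tfrac12\pi_{2,g}^\ast h$ and $\mathfrak V=\mathbf R_g W^\perp=\mathbf R_g W_g^\perp(h)$, which are precisely the two claimed identities \eqref{eq:derivative-at-g-conjugation}.

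I do not expect a genuine obstacle here: the substance of the statement sits in Lemmas \ref{lemma:diff} and \ref{lemma:dllm}, and the present lemma is an assembly step. The points to watch are purely bookkeeping. First, one must check the regularity constraints line up: by Lemma \ref{lemma:diff}, $h\in C^3$ gives $W_g^\perp(h)\in C^2\hookrightarrow C^{\nu_g}$, so $\mathbf R_g$ is applicable (Lemma \ref{lemma:dllm}, with $\nu_g<\alpha$), and $\pi_{2,g}^\ast h\in C^2\hookrightarrow C^{\nu_g}$ as well. Second, the solution branch $(\Psi_{g\to g'},a_{g\to g'})$ is singled out only after imposing the gauge $V(x,v)\in\ker\beta_g$; one should note that $\mathfrak V=\mathbf R_g W_g^\perp(h)$ is automatically consistent with this gauge, since $\ker\beta_g$ is preserved by the flow of $Y_g$ and $\mathbf R_g$ is built from flow-averages of the $E^u(g)$- and $E^s(g)$-components of $W^\perp$. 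With these remarks the proof is a direct combination of the preceding lemmata.
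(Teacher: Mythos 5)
Your proposal is correct and follows exactly the route the paper intends: the paper gives no separate proof of this lemma but presents it as the direct assembly of Lemma \ref{lemma:dllm} and Lemma \ref{lemma:diff} via the Implicit Function Theorem chain rule, which is precisely your computation (including the sign bookkeeping $\mathfrak a=-c=\tfrac12\pi_{2,g}^\ast h$ and the gauge-consistency of $\mathbf R_g W_g^\perp(h)$).
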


Let us now explain how one can improve the regularity of the stretch when modding out the co-boundaries. For this, it will be more convenient to work with two metrics: $g_0$ is fixed, $g$ is $C^3$ close to $g_0$, and $g'$ is $C^3$ close to $g$. As we have seen the value of $\nu_g$ is locally uniformly positive, so that we can find $\nu_0\in (0,1)$ satisfying $\nu_0 \leq \nu_g$ for all $g$ sufficiently $C^3$ close to $g_0$. The actual value of $\nu_0$ will not be crucial in our argument and can be taken arbitrarily small.

We then have $C^{\nu_0}$ maps $a_{g_0\to g}$, $a_{g\to g'}$ and $\Psi_{g_0\to g}$, $\Psi_{g\to g'}$ on $SM$, depending $C^1$, respectively on $g,g'\in C^3$. They satisfy the relations:
\begin{equation}
\begin{split}
d \Psi_{g_0\to g}( Y_{g_0} ) 	&= a_{g_0 \to g}\times  Y_g \circ \Psi_{g_0\to g},\label{eq:conjugation-bis}\\
 d \Psi_{g\to g'} (Y_g) 		&= a_{g\to g'} \times Y_{g'}\circ \Psi_{g_0\to g}.
\end{split}
\end{equation}
Hence, the pairs $(\Psi_{g_0 \to g'},a_{g_0 \to g'})$ and $(\Psi_{g \to g'} \circ \Psi_{g_0 \to g}, a_{g \to g'} \circ \Psi_{g_0 \to g} a_{g_0 \to g})$ produce orbit-conjugacies for the flows generated by $Y_{g_0}$ and $Y_{g'}$ so there exists $u \in C^{\nu_0}$ such that $Y_{g_0} u \in C^{\nu_0}$ and
%We now come back to the arguments of page \pageref{eq:conjugation}. Let us assume that $\Psi$ is a H\"older homeomorphism, $C^{1+\nu_0}$ along $Y_{g_0}$, such that
%\[
%d \Psi( Y_{g_0} ) = b \times Y_{g_0}\circ \Psi. 
%\]
%If $\Psi$ is sufficiently close to the identity map, we can use the Anosov Closing Lemma to prove that $\Psi$ is an orbit conjugation, so that $\Psi = \Upsilon_{g_0,u}$ for some $C^{\nu_0}$ map $u$.
\begin{equation}
\label{equation:life}
\begin{split}
& \Psi_{g_0\to g'} = \Psi_{g\to g'} \circ \Psi_{g_0\to g}\circ \Upsilon_{g_0,u},  \\
&a_{g_0 \to g'} = (1 + Y_{g_0} u) \left[a_{g \to g'} \circ \Psi_{g_0 \to g} \times a_{g_0 \to g}\right] \circ \Upsilon_{g_0,u}.
\end{split}
\end{equation}
In particular, $a_{g \to g'} \circ \Psi_{g_0 \to g} a_{g_0 \to g} = a_{g_0 \to g'}$ in the space $C^{\nu_0}/D^{\nu_0}$. We will prove the following:

\begin{lemma}
\label{lemma:derivative-stretch-mod-Dnu}
For $\|g - g_0\|_{C^3} < \eps_0$ small enough, the map
\[
C^3(M,\otimes^2_S T^*M) \ni g \mapsto [a_{g_0 \to g}] \in C^{\nu_0}/D^{\nu_0}(SM),
\]
is $C^2$. Moreover, we have:
\begin{equation}
\label{eq:log-derivative-stretch}
\mathbf{D}_g a_{g_0\to g} (h)  = \frac{1}{2}\pi_{2,g}^\ast h \circ \Psi_{g_0 \to g} \times  a_{g_0 \to g}  \mod  D^{\nu_0},
\end{equation} 
and:
\begin{equation}
\label{equation:seconde-derivative}
\begin{split}
\mathbf{D}^2_{g} a_{g_0 \to g}(h,h) & = \left(-\frac{1}{4} (\pi_{2,g}^\ast h)^2  + \frac{1}{2} \dd \pi_{2,g}^\ast h\left( \mathbf{R}_g W^\bot_g(h) \right)\right) \circ \Psi_{g_0\to g}  \times a_{g_0 \to g} \mod D^{\nu_0}.
\end{split}
\end{equation}
\end{lemma}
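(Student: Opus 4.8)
The plan is to leverage the cocycle identity \eqref{equation:life}, which already gives $[a_{g_0\to g'}] = [a_{g\to g'}\circ\Psi_{g_0\to g}\times a_{g_0\to g}]$ in $C^{\nu_0}/D^{\nu_0}$ for $g'$ close to $g$, together with the first variations recorded in Lemma~\ref{lemma:derivative-0}. Fix $g$ in a $C^3$-neighbourhood of $g_0$ and let $g'$ vary nearby. Since $\Psi_{g_0\to g}$ and $a_{g_0\to g}$ do not depend on $g'$, the map $g'\mapsto[a_{g_0\to g'}]$ is the composition of $g'\mapsto a_{g\to g'}\in C^{\nu_0}(SM)$ --- which is $C^1$ on $C^3$ metrics by \cite[Proposition 2.2]{Katok-Knieper-Pollicott-Weiss-89} --- with the \emph{fixed} bounded linear map $L_g:c\mapsto[c\circ\Psi_{g_0\to g}\times a_{g_0\to g}]\in C^{\nu_0}/D^{\nu_0}$. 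Hence $g\mapsto[a_{g_0\to g}]$ is $C^1$, and the formula $\mathbf{D}_{g'}a_{g\to g'}|_{g'=g}(h) = \tfrac12\pi_{2,g}^\ast h$ of Lemma~\ref{lemma:derivative-0} gives $\mathbf{D}_g a_{g_0\to g}(h) = L_g(\tfrac12\pi_{2,g}^\ast h)$, which is \eqref{eq:log-derivative-stretch}.

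\emph{Gaining one derivative.} The $C^2$ statement is subtler: on $C^3$ metrics $g'\mapsto a_{g\to g'}\in C^{\nu_0}$ is only $C^1$ ($C^2$ would require $C^4$ metrics), so $L_g$ alone does not suffice, and the gain must come from the coboundary quotient absorbing the lost derivative. To see this, run the argument behind \eqref{equation:life} with $g$ in place of $g_0$: for $g',g''$ near $g$ one has $[a_{g\to g''}] = [a_{g'\to g''}\circ\Psi_{g\to g'}\times a_{g\to g'}]$ in $C^{\nu_0}/D^{\nu_0}_g$, where $D^{\nu_0}_g := \{Y_g v : v, Y_g v\in C^{\nu_0}\}$. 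Differentiating this in $g''$ at $g''=g'$ and using Lemma~\ref{lemma:derivative-0} with base metric $g'$ yields, for each $g'$ near $g$,
\[
\mathbf{D}_{g''}[a_{g\to g''}]\big|_{g''=g'}(h) = \Big[\tfrac12\,\pi_{2,g'}^\ast h\circ\Psi_{g\to g'}\times a_{g\to g'}\Big]\in C^{\nu_0}/D^{\nu_0}_g .
\]
On the right, $g'\mapsto\pi_{2,g'}^\ast h$ is smooth into $C^2(SM)$ while $g'\mapsto\Psi_{g\to g'}$ and $g'\mapsto a_{g\to g'}$ are $C^1$ by the implicit function theorem; since composition $C^2\times C^{\nu_0}\to C^{\nu_0}$ is $C^1$ (the footnote, \cite{DeLaLLave-Obaya-98}) and multiplication is bounded bilinear on $C^{\nu_0}$, this is a $C^1$ function of $g'$. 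Therefore $g'\mapsto[a_{g\to g'}]\in C^{\nu_0}/D^{\nu_0}_g$ is $C^2$.

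\emph{Transfer and the second derivative.} By \eqref{eq:conjugation-bis}, precomposition with $\Psi_{g_0\to g}$ intertwines $Y_{g_0}$ with $a_{g_0\to g}\,Y_g\circ\Psi_{g_0\to g}$ --- explicitly $Y_{g_0}(v\circ\Psi_{g_0\to g}) = a_{g_0\to g}\,(Y_g v)\circ\Psi_{g_0\to g}$ --- so $c\mapsto c\circ\Psi_{g_0\to g}\times a_{g_0\to g}$ maps $D^{\nu_0}_g$ into $D^{\nu_0}$ and descends to a bounded linear $T_g:C^{\nu_0}/D^{\nu_0}_g\to C^{\nu_0}/D^{\nu_0}$ with $[a_{g_0\to g'}] = T_g([a_{g\to g'}])$. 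Composing with $T_g$ shows $g'\mapsto[a_{g_0\to g'}]$ is $C^2$ near $g$; as $g$ near $g_0$ was arbitrary and being $C^2$ is local, $g\mapsto[a_{g_0\to g}]$ is $C^2$ on a neighbourhood of $g_0$, with $\mathbf{D}^2_g a_{g_0\to g}(h,h) = T_g\big(\mathbf{D}^2_{g'}[a_{g\to g'}]|_{g'=g}(h,h)\big)$. Differentiating the displayed formula at $g'=g$ --- using $\Psi_{g\to g}=\mathrm{id}$, $a_{g\to g}=\mathbf{1}$, $\mathbf{D}_{g'}\Psi_{g\to g'}|_{g'=g}(h)=\mathbf{R}_g W_g^\perp(h)$ and $\mathbf{D}_{g'}a_{g\to g'}|_{g'=g}(h)=\tfrac12\pi_{2,g}^\ast h$ (Lemma~\ref{lemma:derivative-0}, with the bound $\|W_g^\perp(h)\|_{C^\alpha}\lesssim\|h\|_{C^{1+\alpha}}$ of Lemma~\ref{lemma:diff} placing $\mathbf{R}_g W_g^\perp(h)$ in the relevant Hölder class), and the elementary $\partial_t|_{t=0}\pi_{2,g+th}^\ast h = -(\pi_{2,g}^\ast h)^2$, which follows from $\pi_{2,g'}^\ast h(x,v)=h_x(v,v)/|v|_{g'}^2$ --- the product rule gives, modulo $D^{\nu_0}_g$,
\[
\mathbf{D}^2_{g'}[a_{g\to g'}]\big|_{g'=g}(h,h) = -\tfrac14(\pi_{2,g}^\ast h)^2 + \tfrac12\,\dd\pi_{2,g}^\ast h\big(\mathbf{R}_g W_g^\perp(h)\big),
\]
the coefficient $-\tfrac14 = -\tfrac12 + \tfrac14$, where the $+\tfrac14$ is produced by differentiating the factor $a_{g\to g'}$. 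Applying $T_g$ yields \eqref{equation:seconde-derivative}.

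\emph{Main obstacle.} The delicate point is the regularity bookkeeping in the middle step: one must verify that the quotient by $D^{\nu_0}_g$ genuinely recovers the derivative lost by the rough maps $\Psi_{g\to g'}$ and $a_{g\to g'}$. This works precisely because the first variation of the stretch is the \emph{smooth} tensor $\tfrac12\pi_{2,g}^\ast h$ (with no $\Psi$ or $a$ in it), so that after one differentiation these rough objects appear only linearly, at which point the composition estimate ``$C^{m+1}\times C^\nu\to C^\nu$ is $C^m$'' propagates one order of smoothness; the transfer to the $g_0$-quotient by $T_g$ is then a direct consequence of $\Psi_{g_0\to g}$ conjugating the two flows.
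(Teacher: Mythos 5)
Your proof is correct and follows essentially the same route as the paper: the cocycle identity \eqref{equation:life} localizes the $g'$-dependence in the maps based at $g$, Lemma \ref{lemma:derivative-0} gives their first variations, and differentiating the resulting expression for $\mathbf{D}_{g'}a_{g_0\to g'}(h)$ produces the three terms with the same coefficient bookkeeping $-\tfrac14=-\tfrac12+\tfrac14$. Your explicit introduction of the intermediate quotient $C^{\nu_0}/D^{\nu_0}_g$ and the transfer operator $T_g$ is just a cleaner packaging of the paper's direct use of \eqref{equation:life} and \eqref{equation:structure}.
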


\begin{proof}
The computation for the first derivative is an immediate consequence of the previous discussion combined with Lemma \ref{lemma:derivative-0}. From the very expression of $[\mathbf{D}_g a_{g_0\to g} (h)] \in C^{\nu_0}/D^{\nu_0}$, we see that it depends on a $C^1$ fashion of the metric $g \in C^3(M,\otimes^2_S T^*M)$ (since $\Psi_{g_0 \to g}$ and $a_{g_0 \to g}$ depend in a $C^1$ fashion of $g$). Hence, the stretch is $C^2$ as claimed. As far as the second derivative is concerned, we start with the equality in $C^{\nu_0}/D^{\nu_0}$:
\[
\mathbf{D}_{g'} a_{g_0\to g'} (h)  = \frac{1}{2}\pi_{2,g'}^\ast h \circ \Psi_{g_0 \to g'} \times  a_{g_0 \to g'}  \mod  D^{\nu_0}.
\]
Then, using \eqref{equation:life} in the first line, together with \eqref{equation:structure} in the second line, we get:
\[
\begin{split}
\mathbf{D}_{g'} a_{g_0\to g'} (h)  &= \frac{1}{2}\pi_{2,g'}^\ast h \circ \Psi_{g\to g'} \circ \Psi_{g_0\to g}\circ \Upsilon_{g_0,u} \times   (1 + Y_{g_0} u) \left[a_{g \to g'} \circ \Psi_{g_0 \to g} \times a_{g_0 \to g}\right] \circ \Upsilon_{g_0,u}  \\
& \hspace{10cm} \mod  D^{\nu_0} \\
& = \frac{1}{2}\pi_{2,g'}^\ast h \circ \Psi_{g\to g'} \circ \Psi_{g_0\to g} \times \left[a_{g \to g'} \circ \Psi_{g_0 \to g} \times a_{g_0 \to g}\right]  \mod D^{\nu_0}.
\end{split}
\]
Differentiating with respect to $g'$ and evaluating at $g$, we get by Lemma \ref{lemma:derivative-0}:
\[
\begin{split}
\mathbf{D}^2_{g} a_{g_0 \to g}(h,h) & = \frac{1}{2} \mathbf{D}_{g'}(\pi_{2,g'}^\ast h)|_{g'=g} \circ \Psi_{g_0\to g} \times a_{g_0 \to g} + \frac{1}{2} \dd \pi_{2,g}^\ast h\left( \mathbf{R}_g W^\bot_g(h) \circ \Psi_{g_0 \to g} \right) \times a_{g_0 \to g} \\
& + \frac{1}{4} (\pi_{2,g}^\ast h)^2 \circ \Psi_{g_0 \to g}  \times a_{g_0 \to g} \mod D^{\nu_0}.
\end{split}
\]
An easy computation yields $\mathbf{D}_{g'}(\pi_{2,g'}^\ast h)|_{g'=g} = -(\pi_{2,g}^\ast h)^2$, providing the announced result.
\end{proof}

We need to estimate the second derivative of the stretch. Even if the stretch is a $C^2$ map when taking values in $C^{\nu_0}/D^{\nu_0}$, we will evaluate its smoothness in a less regular space, namely in $C^{\nu}$, for $\nu \ll \nu_0$.

\begin{lemma}
\label{lemma:norme}
For $\| g-g_0\|_{C^3}$ small enough, $0<\nu < \nu_0$, and $\alpha = \nu/\nu_0 \leq \nu_0$, there exists a constant $C > 0$ such that:
\[
\|\mathbf{D}^2_g a_{g_0 \to g} (h,h)\|_{C^{\nu}/D^\nu} \leq C \|h\|_{C^{1+\alpha}}\|h\|_{C^1}
\]
\end{lemma}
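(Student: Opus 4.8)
The plan is to start from the closed-form expression for the second derivative furnished by Lemma~\ref{lemma:derivative-stretch-mod-Dnu}: modulo $D^{\nu_0}$ one has $\mathbf{D}^2_g a_{g_0\to g}(h,h) = (G\circ\Psi_{g_0\to g})\,a_{g_0\to g}$, where
\[
G := -\tfrac14 (\pi_{2,g}^\ast h)^2 + \tfrac12\, \dd \pi_{2,g}^\ast h\big( \mathbf{R}_g W^{\perp}_g(h) \big).
\]
Since $\nu<\nu_0$, the inclusion $C^{\nu_0}(SM)\subset C^\nu(SM)$ gives $D^{\nu_0}(SM)\subset D^\nu(SM)$, so $(G\circ\Psi_{g_0\to g})\,a_{g_0\to g}$ is also a representative of $\mathbf{D}^2_g a_{g_0\to g}(h,h)$ modulo $D^\nu$, whence $\|\mathbf{D}^2_g a_{g_0\to g}(h,h)\|_{C^\nu/D^\nu}\le\|(G\circ\Psi_{g_0\to g})\,a_{g_0\to g}\|_{C^\nu}$. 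It remains to bound this $C^\nu$-norm.

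The next step is to peel off the post-composition and the multiplication. For $\|g-g_0\|_{C^3}$ small enough, $\Psi_{g_0\to g}$ and $a_{g_0\to g}$ stay uniformly bounded in $C^{\nu_0}$ (they depend continuously on $g$ and reduce to $\mathrm{id}$, resp. $\mathbf{1}$, at $g=g_0$), so in particular $a_{g_0\to g}$ is uniformly bounded in $C^\nu$. Since $0<\alpha\le\nu_0<1$ and $\alpha\nu_0=\nu$, the elementary composition estimate $\|G\circ\Psi_{g_0\to g}\|_{C^{\nu}}\lesssim \|G\|_{C^{\alpha}}\big(1+[\Psi_{g_0\to g}]_{C^{\nu_0}}\big)^{\alpha}$ together with the Hölder product estimate $\|fv\|_{C^\nu}\lesssim \|f\|_{C^\nu}\|v\|_{C^\nu}$ reduce everything to proving $\|G\|_{C^{\alpha}}\lesssim \|h\|_{C^{1+\alpha}}\|h\|_{C^1}$, the use of the standard Hölder space $C^\alpha$ being legitimate because $\alpha\le\nu_0<1$.

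To bound $\|G\|_{C^\alpha}$ I would treat the two summands separately via the Leibniz-type Hölder estimate $\|fv\|_{C^\alpha}\lesssim \|f\|_{C^\alpha}\|v\|_{L^\infty}+\|f\|_{L^\infty}\|v\|_{C^\alpha}$. For the first, $\|(\pi_{2,g}^\ast h)^2\|_{C^\alpha}\lesssim \|\pi_{2,g}^\ast h\|_{C^\alpha}\|\pi_{2,g}^\ast h\|_{L^\infty}\lesssim \|h\|_{C^\alpha}\|h\|_{C^0}\le \|h\|_{C^{1+\alpha}}\|h\|_{C^1}$ by Lemma~\ref{lemma:diff}. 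For the second, take $f=\dd\pi_{2,g}^\ast h$ and $v=\mathbf{R}_g W^\perp_g(h)$: Lemma~\ref{lemma:diff} gives $\|f\|_{C^\alpha}\lesssim \|\pi_{2,g}^\ast h\|_{C^{1+\alpha}}\lesssim \|h\|_{C^{1+\alpha}}$ and $\|f\|_{L^\infty}\lesssim \|\pi_{2,g}^\ast h\|_{C^1}\lesssim \|h\|_{C^1}$ (this is where $h\in C^{1+\alpha}$ and $g\in C^3$ with $1+\alpha\le 2$ are used), as well as $\|W^\perp_g(h)\|_{C^\alpha}\lesssim \|h\|_{C^{1+\alpha}}$ and $\|W^\perp_g(h)\|_{L^\infty}\lesssim \|h\|_{C^1}$; and since $\alpha\le\nu_0$ lies strictly below the regularity threshold of $\mathbf{R}_g$ from Lemma~\ref{lemma:dllm} (recall $\nu_0\le\nu_g$, which is half that threshold), $\mathbf{R}_g$ is bounded on both $C^0$ and $C^\alpha$ uniformly for $g$ near $g_0$, so $\|v\|_{L^\infty}\lesssim \|h\|_{C^1}$ and $\|v\|_{C^\alpha}\lesssim \|h\|_{C^{1+\alpha}}$. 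Plugging in yields $\|\dd\pi_{2,g}^\ast h(\mathbf{R}_g W^\perp_g(h))\|_{C^\alpha}\lesssim \|h\|_{C^{1+\alpha}}\|h\|_{C^1}$, hence $\|G\|_{C^\alpha}\lesssim \|h\|_{C^{1+\alpha}}\|h\|_{C^1}$, which closes the argument.

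The one load-bearing point — and the reason the statement is parametrized through $\alpha=\nu/\nu_0$ rather than stated at a single regularity — is the composition with the merely Hölder orbit-conjugacy $\Psi_{g_0\to g}$: regularity $\alpha$ invested in the ``flat'' factor $G$ survives only as $C^{\alpha\nu_0}=C^\nu$ after composition, which is precisely why one must take $\nu\le\nu_0^2$. The remaining bookkeeping — arranging that exactly one factor carries the top regularity $C^{1+\alpha}$ while the other stays at $C^1$ — is routine once one exploits the specific shape of the formula in Lemma~\ref{lemma:derivative-stretch-mod-Dnu} (one summand is a square, the other pairs a single derivative of $\pi_{2,g}^\ast h$ with the once-smoothed first-order quantity $W^\perp_g(h)$) together with the boundedness of $\mathbf{R}_g$ on both $L^\infty$ and $C^\alpha$.
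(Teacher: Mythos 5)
Your proof is correct and follows essentially the same route as the paper's: reduce to the $C^\nu$-norm of the representative $F\circ\Psi_{g_0\to g}\times a_{g_0\to g}$, use the composition estimate $\|F\circ\Psi\|_{C^{\alpha\nu_0}}\lesssim\|F\|_{C^\alpha}\|\Psi\|_{C^{\nu_0}}^\alpha$ with $\alpha=\nu/\nu_0$, and then bound $\|F\|_{C^\alpha}$ term by term via the Hölder--Leibniz estimate together with Lemmas~\ref{lemma:dllm} and~\ref{lemma:diff}. The only additions are explicit justifications (the inclusion $D^{\nu_0}\subset D^\nu$ and the boundedness of $\mathbf{R}_g$ on $C^\alpha$ from $\alpha\le\nu_0\le\nu_g$) that the paper leaves implicit.
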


\begin{proof}
Modulo $D^{\nu_0}$, the second derivative of the stretch is of the form $F \circ \Psi_{g_0 \to g} \times a_{g_0 \to g}$. Hence, using uniform bounds for $a_{g_0 \to g}$ in $C^{\nu}$, we get:
\[
\|F \circ \Psi_{g_0 \to g} \times a_{g_0 \to g}\|_{C^{\nu}} \lesssim \|F \circ  \Psi_{g_0 \to g}\|_{C^{\nu}} \|a_{g_0 \to g}\|_{C^{\nu}} \lesssim \|F\|_{C^{\alpha}} \|\Psi_{g_0 \to g}\|_{C^{\nu/\alpha}}^{\alpha} \lesssim  \|F\|_{C^{\alpha}}.
\]
Hence, it remains to compute the $C^{\alpha}$-norm of
\[
F = -\frac{1}{4} (\pi_{2,g}^\ast h)^2  + \frac{1}{2} \dd \pi_{2,g}^\ast h\left( \mathbf{R}_g W^\bot_g(h) \right).
\]
We then have using Lemmas \ref{lemma:dllm} and \ref{lemma:diff}, for $0\leq\alpha\leq \nu_0$:
\[
\begin{split}
\|F\|_{C^{\alpha}}& \lesssim \|h\|_{C^{\alpha}}^2 + \|\dd \pi_{2,g}^\ast h\|_{C^{\alpha}} \|\mathbf{R}_g W^\bot_g(h)\|_{C^{0}} +  \|\dd \pi_{2,g}^\ast h\|_{C^{0}} \|\mathbf{R}_g W^\bot_g(h)\|_{C^{\alpha}} \\
&  \lesssim \|h\|_{C^{\alpha}}^2 + \|h\|_{C^{1+\alpha}} \|W^\bot_g(h)\|_{C^{0}} + \|h\|_{C^1} \|W^\bot_g(h)\|_{C^{\alpha}} \\
& \lesssim  \|h\|_{C^{\alpha}}^2 + \|h\|_{C^{1+\alpha}} \|h\|_{C^{1}} + \|h\|_{C^1} \|h\|_{C^{1+\alpha}} \lesssim  \|h\|_{C^{1+\alpha}}\|h\|_{C^1}
\end{split}
\]
\end{proof}

We can now prove Proposition \ref{prop:sharp-estimate-stretch}

\begin{proof}[Proof of Proposition \ref{prop:sharp-estimate-stretch}]
The stretch $C^3(M,\otimes^2_S T^*M) \ni g \mapsto [a_{g_0 \to g}] \in C^{\nu_0}/D^{\nu_0}(SM)$ is a $C^2$ map and we can therefore compute its Taylor expansion at $0$:
\[
a_g = \mathbf{1} + \dfrac{1}{2} \pi_2^*(g-g_0) + \int_0^1 (1-t) \mathbf{D}^2_{g'} a_{g_0 \to g'}|_{g'=tg + (1-t)g_0} (g-g_0,g-g_0) \dd t \mod D^{\nu_0}.
\]
Taking the $C^{\nu}$-norm and applying the previous Lemma \ref{lemma:norme}, we obtain by interpolation that:
\[
\left\|a_g - \Big(\mathbf{1} + \dfrac{1}{2} \pi_2^*(g-g_0)\Big)\right\|_{C^{\nu}/D^{\nu}} \lesssim \|g-g_0\|_{C^{1+\alpha}}\|g-g_0\|_{C^{1}}  \lesssim \|g-g_0\|_{C^{\nu-1}} \|g-g_0\|_{C^{3+\alpha(1-\nu_0)}}
\]
(here, the constraint on $\alpha$ is $\alpha \in (0,\nu_0^2)$ so $\alpha(1-\nu_0)$ can be taken arbitrarily small).
\end{proof}

\subsection{Generalized X-ray transform}

It is now well-known \cite{Faure-Sjostrand-11, Giulietti-Liverani-Pollicott-13, Faure-Tsuji-13, Dyatlov-Zworski-16} that the resolvents of the flow $\RR_{\pm}(\lambda) := (\mp X_{g_0}-\lambda)^{-1} : C^\infty(SM) \rightarrow \mc{D}'(SM)$, initially defined on $\left\{\Re(\lambda) > 0 \right\}$ admit a meromorphic extension to the whole complex plane with poles of finite rank, called the \emph{Pollicott-Ruelle resonances}. In particular, near $\lambda = 0$, we have the following expansion (see \cite[Section 2.3]{Guillarmou-17-1}):
\[
\RR_+(\lambda) = -\RR_0^+ - \dfrac{\Pi_0}{\lambda} + \mc{O}(\lambda),\ \RR_-(\lambda) = -\RR_0^- - \dfrac{\Pi_0}{\lambda} + \mc{O}(\lambda), 
\]
for some operators $\RR_0^{\pm} : C^\infty(SM) \rightarrow \mc{D}'(SM)$ which will be further described in a few lines. Up to a normalizing factor, we have $\Pi_0 = \mathbf{1} \otimes \mathbf{1}$, i.e. it is the orthogonal projection onto constant functions. We can then form the operator $\Pi := \RR_0^+ + \RR_0^-$. This operator satisfies $\Pi X f= X \Pi f= 0$ for all $f \in H^s(SM)$, see \cite[Theorem 2.6]{Guillarmou-17-1}. Moreover, it is nonnegative in the sense that $\langle \Pi f, f \rangle_{L^2} \geq 0$, for all $f \in H^s(SM)$, see \cite[Lemma 4.3]{Gouezel-Lefeuvre-19}. We define:
\[
\Pi_2 := {\pi_2}_*(\Pi_0 + \Pi) \pi_2^*
\]
This operator is called the \emph{generalized X-ray transform}. It is a pseudodifferential operator of order $-1$ \cite[Theorem 3.1]{Guillarmou-17-1} with explicit principal symbol computed in \cite[Theorem 4.4]{Gouezel-Lefeuvre-19}. In particular, it is known to be elliptic and invertible on solenoidal tensors (i.e. on $\ker D^*$). A immediate implication is the following bound:

\begin{lemma}
\label{lemma:bound}
For all $s \in \R$, there exists a constant $C = C(s) > 0$ such that:
\[
\forall f \in C^s_*(M,\otimes^2_S T^*M) \cap \ker D^*, ~~~~ \|f\|_{C^s_*} \leq C \|\Pi_2f\|_{C^{s+1}_*}.
\]
\end{lemma}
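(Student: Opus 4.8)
The plan is to leverage the fact, recalled just above the statement, that $\Pi_2$ is a classical pseudodifferential operator of order $-1$ which is elliptic on the closed subspace $\ker D^\ast \subset C^\infty(M,\otimes^2_S T^\ast M)$ of solenoidal symmetric $2$-tensors. Ellipticity on $\ker D^\ast$ means there is a parametrix: a classical pseudodifferential operator $Q$ of order $+1$ and a smoothing operator $R$ such that $Q\Pi_2 = \pi_{\ker D^\ast} + R$ on $C^\infty$, where $\pi_{\ker D^\ast}$ is a (smoothing, since the potential part is governed by an elliptic operator) projector onto the solenoidal tensors. More concretely one builds $Q$ from the symbol of $\Pi_2$ restricted to the solenoidal directions, which is elliptic of order $-1$, so $Q$ has order $1$; the error $R$ maps $C^{-N}_\ast$ to $C^\infty$.

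First I would fix $s \in \R$ and $f \in C^s_\ast(M,\otimes^2_S T^\ast M) \cap \ker D^\ast$. Applying the parametrix identity gives $f = \pi_{\ker D^\ast} f = Q\Pi_2 f - Rf$, using $f \in \ker D^\ast$. Then I estimate each term in $C^s_\ast$. For the main term, $Q$ is a classical pseudodifferential operator of order $1$, hence bounded $C^{s+1}_\ast \to C^s_\ast$ (this is the order-$1$ analogue of the Calderón–Vaillancourt bound of Theorem~\ref{theorem:cv}; boundedness of classical pseudodifferential operators of order $m$ from $C^{t+m}_\ast$ to $C^t_\ast$ is standard on Hölder–Zygmund scales and follows from the same Littlewood–Paley arguments used in \S\ref{ssection:hz-spaces}), so $\|Q\Pi_2 f\|_{C^s_\ast} \leq C\|\Pi_2 f\|_{C^{s+1}_\ast}$. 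For the remainder, $R$ is smoothing, so $\|Rf\|_{C^s_\ast} \leq C\|f\|_{C^{-N}_\ast} \leq C\|f\|_{C^s_\ast}$ for any $N$; a priori this does not yet close the estimate because $\|f\|_{C^s_\ast}$ appears on both sides.

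The last step — and the only mildly delicate point — is to absorb or remove the remainder term. The cleanest route is to note that the operator $\Pi_2$ is not merely elliptic but \emph{invertible} on $\ker D^\ast$ (stated in the excerpt): the map $\Pi_2 : C^\infty(M,\otimes^2_S T^\ast M)\cap\ker D^\ast \to C^\infty(M,\otimes^2_S T^\ast M)\cap\ker D^\ast$ admits a bounded two-sided inverse, which by the standard elliptic theory on a closed manifold is again a classical pseudodifferential operator of order $+1$ modulo smoothing, mapping $\ker D^\ast$ to itself. Calling this inverse $\Pi_2^{-1}$, we get directly $f = \Pi_2^{-1}\Pi_2 f$ on solenoidal tensors, and boundedness of $\Pi_2^{-1}$ of order $1$ on Hölder–Zygmund spaces yields $\|f\|_{C^s_\ast} \leq C\|\Pi_2 f\|_{C^{s+1}_\ast}$ with no error term at all. (If one prefers to work only with the parametrix $Q$ and the smoothing error $R$, one instead uses that $\mathbf{1}+R|_{\ker D^\ast}$ is invertible — it is a compact perturbation of the identity and injective since $\Pi_2$ is injective on $\ker D^\ast$ — and composes with its inverse, which is again order $0$; this reproduces the same conclusion.) Either way the inequality of Lemma~\ref{lemma:bound} follows, with $C$ depending only on $s$. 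I expect no real obstacle here: the content is entirely the invertibility and pseudodifferential nature of $\Pi_2$ on solenoidal tensors, which is imported from \cite{Guillarmou-17-1,Gouezel-Lefeuvre-19}, combined with the mapping properties of pseudodifferential operators on the $C^s_\ast$ scale already set up in \S\ref{section:pseudo}.
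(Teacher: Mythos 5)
Your argument is correct and is exactly what the paper has in mind: it states the lemma as an ``immediate implication'' of the ellipticity and invertibility of $\Pi_2$ on solenoidal tensors, and your parametrix-plus-inverse reasoning, combined with boundedness of pseudodifferential operators of order $1$ from $C^{s+1}_\ast$ to $C^s_\ast$, is the intended route. Two harmless slips worth noting: the projector onto $\ker D^\ast$ is $\mathbf{1}-D(D^\ast D)^{-1}D^\ast$, a pseudodifferential operator of order $0$ rather than a smoothing one (you only use that it acts as the identity on $\ker D^\ast$, so nothing breaks), and in your parenthetical alternative the injectivity of $\mathbf{1}+R$ on $\ker D^\ast$ does not follow from injectivity of $\Pi_2$ alone (from $Q\Pi_2 f=0$ one cannot conclude $\Pi_2 f=0$ without knowing more about $Q$) — but your main route via the two-sided inverse avoids this entirely.
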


Eventually, we will need the following key ingredient:

\begin{lemma}
\label{lemma:bound2}
For all $s > 0$, the operator ${\pi_2}_* \Pi : C^s(SM) \rightarrow C^s(M, \otimes^2_S T^*M)$ is bounded.
\end{lemma}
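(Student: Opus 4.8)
The operator $\Pi = \RR_0^+ + \RR_0^-$ is built from the holomorphic parts at $\lambda = 0$ of the two resolvents $\RR_\pm(\lambda) = (\mp X - \lambda)^{-1}$. The strategy is to write $\Pi$ (modulo the rank-one piece $\Pi_0 = \mathbf 1\otimes\mathbf 1$, which is obviously bounded $C^s\to C^s$ and whose image is constants, hence killed by $\pi_{2*}$ up to a trivial term) as an absolutely convergent integral of propagators of the form
\[
\Pi = \int_0^{+\infty} \left( e^{-tX} + e^{tX} \right) \pi_{\ker} \,\dd t + (\text{smoothing}),
\]
or rather to use the standard identity $\RR_\pm(\lambda) = \int_0^{+\infty} e^{-\lambda t} e^{\mp tX}\,\dd t$ for $\Re\lambda>0$ and then peel off the resonance at $0$. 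The key analytic input is that the propagator $e^{\mp tX}$ acting on $C^s(SM)$ is bounded \emph{uniformly in $t$} — this is the same fact used throughout Section \ref{section:estimates} (a flow propagator is an isometry up to a fixed constant on $C^s_*$, $s\in\R$). Since $X$ here is the geodesic vector field of $g_0$, which is Anosov, we expect the projected resolvents to have exponentially decaying kernels once the neutral direction is removed. Concretely: on the orthogonal complement of constants, $\RR_0^\pm f$ can be represented as $\int_0^\infty e^{\mp tX}(f - \Pi_0 f)\,\dd t$ provided this converges, and the convergence (and $C^s$-boundedness) is exactly the content of the spectral gap for the transfer operator restricted away from the resonance at $0$.

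The cleanest route, and the one I would follow, is to avoid reproving a spectral gap from scratch and instead piggyback on Theorem \ref{theorem:regularity} / Theorem \ref{theorem:source} together with the mapping properties of $\Pi$ on anisotropic spaces already in the literature. First I would recall that $\Pi: \mathcal D'(SM)\to\mathcal D'(SM)$ satisfies $X\Pi = \Pi X = 0$ and has wavefront set contained in $E_s^*\cup E_u^*$ (it is smoothing in the flow direction, being a pseudodifferential-type operator of order $-1$ along $E_s^*\oplus E_u^*$ and microlocally trivial elsewhere — this is \cite{Guillarmou-17-1}). Then for $f\in C^s(SM)$ with $s>0$, the distribution $u := \Pi f$ satisfies $Xu = 0$, so by the radial source estimate (Theorem \ref{theorem:source}) applied near $E_s^*$ and $E_u^*$, microlocally near the source/sink $u$ inherits the regularity of $f$; propagation of singularities (Proposition \ref{prop:usual-propagation-Cs}) then spreads this to all of $E_s^*\oplus E_u^*$, giving $u\in C^s_*(SM)$ with $\|u\|_{C^s_*}\lesssim \|f\|_{C^s_*} + \|u\|_{C^{-N}}$, and the last term is controlled by $\|f\|_{C^{-N}}\lesssim\|f\|_{C^s}$ since $\Pi$ is bounded on, say, $H^{-N}$-type anisotropic spaces. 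Finally, $\pi_{2*}:C^s_*(SM)\to C^s_*(M,\otimes^2_ST^*M)$ is bounded (it is an integration along fibers of a smooth bundle map), so ${\pi_2}_*\Pi f\in C^s_*(M,\otimes^2_ST^*M)$, and since $s>0$ is not required to be an integer in the applications one may state it as $C^s$; the threshold hypothesis $\rho>\max(0,\omega_+(\X),\omega_-(\X))$ needed for Theorem \ref{theorem:regularity} is satisfied here because for the scalar geodesic vector field $\X = X$ one has $\omega_\pm(X) = 0$ (the propagator on $L^\infty$ has norm $1$), exactly as in the proof of Theorem \ref{theorem:smoothness}.

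The main obstacle — the step I would spend the most care on — is the low-frequency / resonance bookkeeping: $\Pi$ is defined via the \emph{holomorphic} part of the resolvent at $\lambda=0$, and one must make sure that the argument above is applied to $\Pi$ itself and not to some operator contaminated by the pole. This is handled by the fact that $\Pi$ already annihilates constants and that $\Pi f = u$ is a genuine distribution with the stated wavefront set, so the source estimate applies verbatim; but one should check that the constant $C$ in Lemma \ref{lemma:bound2} can be taken uniform, which follows from the uniformity of the constants in Theorem \ref{theorem:source} and Proposition \ref{prop:usual-propagation-Cs} together with the fact that $\omega_\pm(X)=0<s$. A secondary point to verify is that the $C^{-N}$ error can legitimately be absorbed: this uses that $\Pi$ is bounded $C^{-N}\to C^{-N'}$ for some fixed $N'$, which is standard from the anisotropic-space theory of \cite{Dyatlov-Zworski-16, Faure-Sjostrand-11}, hence $\|\Pi f\|_{C^{-N'}}\lesssim\|f\|_{C^{-N}}\lesssim\|f\|_{C^s}$ for $s>0$. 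With these two checks in place the lemma follows.
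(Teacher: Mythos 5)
There is a genuine gap, and it sits at the center of your argument: you set $u:=\Pi f$, note $Xu=0$, and invoke the radial source estimate near $E_s^*$ and $E_u^*$ to conclude $u\in C^s_*(SM)$. But the ``bootstrap'' form of Theorem \ref{theorem:source} requires the \emph{a priori} hypothesis $Au\in C^{\rho_0}_*$ for some $\rho_0>\omega_+(\X)$ near the radial set, and this fails for $\Pi f$: the summand $\RR_0^- f$ has wavefront set exactly on $E_s^*$ and lies only in a negative-order anisotropic space there (and symmetrically $\RR_0^+ f$ is singular at $E_u^*$). Radial estimates come with a high-regularity/low-regularity dichotomy, and $\Pi f$ sits on the wrong side of the threshold at both radial sets. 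Indeed, if your argument were valid, then since $B\X u=0$ the estimate would hold for every $\rho$, yielding $\Pi f\in C^\infty(SM)$ for all $f\in C^s$ --- which is false: $\Pi$ produces genuine invariant distributions (otherwise $\Pi_2$ could not be an elliptic pseudodifferential operator of order $-1$). So the intermediate claim ``$\Pi f\in C^s_*(SM)$'' is wrong, and composing with the boundedness of ${\pi_2}_*$ cannot rescue it. (Your opening suggestion of writing $\Pi$ as a convergent integral of propagators on $C^s$ would likewise require a spectral gap in $C^s\to C^s$ operator norm, which the anisotropic theory does not give.)

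The missing idea is that the statement concerns ${\pi_2}_*\Pi$, not $\Pi$, and ${\pi_2}_*$ only detects wavefront set in a conic neighbourhood of $\V^*$; this localizes the required estimate away from the radial sets. The paper's route is: treat $\RR_0^+$ and $\RR_0^-$ separately; for $f=\RR_0^+u$ one has $\WF(f)\subset E_u^*$, so $f$ \emph{is} smooth near $E_s^*$ and the source estimate applies there with a trivial a priori hypothesis, giving $\|Af\|_{C^s}\leq C(\|Bu\|_{C^s}+\|f\|_{C^{-N}})$ with $A$ elliptic near $E_s^*$; since $E_s^*$ is a source, every point of a conic neighbourhood of $\V^*$ reaches $\Ell(A)$ under $\Phi_{-t}$ in uniformly bounded time, so Proposition \ref{prop:usual-propagation-Cs} transports the $C^s$ bound to $\V^*$. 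No regularity near $E_u^*$ is ever claimed --- and none holds. Your proposal never uses the wavefront-set-selection property of ${\pi_2}_*$, which is precisely what makes the lemma true.
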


This is a consequence of Theorem \ref{theorem:source}.

\begin{proof}
It is sufficient to argue on ${\pi_2}_* \RR_0^+$ as ${\pi_2}_* \RR_0^-$ is dealt in the same fashion. Recall that one has the splitting
\[
T(SM) = \R \cdot X \oplus \V \oplus \HH,
\]
where $\V = \ker \dd \pi$ (with $\pi : SM \rightarrow M$ being the projection) is the \emph{vertical subspace}, and $\HH$ is the \emph{horizontal subspace}, see \cite{Paternain-99} for further details. We introduce $\V^*(\V)=0,\HH^*(\HH \oplus \R \cdot X) = 0$. As ${\pi_2}_*$ is a pushforward, it only selects wavefront set in $\V^*$. More precisely, if $f \in \mc{D}'(SM)$ is such that $\WF(f) \cap \V^* = \emptyset$, then ${\pi_2}_* f$ is smooth, see the proof of \cite[Theorem 3.1]{Guillarmou-17-1}. As a consequence, it is sufficient to prove that if $u \in C^s(SM)$, then $\RR_0^+ u$ is microlocally $C^s$ near $\V^*$.

As $\RR_0^+$ is the inverse of $X$ on $\ker \Pi_0$, we consider $u \in C^\infty(SM) \cap \ker \Pi_0$ and set $f := \RR_0^+ u$, thus $Xf = u$. In particular, we know that $f$ is microlocally smooth everywhere except near $E_u^*$ that is $\WF(f) \subset E_u^*$, see \cite[Proposition 3.3]{Dyatlov-Zworski-16}. As a consequence, we can apply the source estimate of Theorem \ref{theorem:source} and we obtain for $s > 0, N > 0$:
\[
\|Af\|_{C^s} \leq C\left(\|BXf\|_{C^{s}} + \|f\|_{C^{-N}} \right),
\]
where $A,B \in \Psi^0(SM)$ are microlocalized near $E_s^*$, that is
\[
\|A \RR_0^+ u\|_{C^s} \leq C\left(\|B u\|_{C^{s}} + \|\RR_0^+ u\|_{C^{-N}} \right).
\]
Note that we already know that $\RR_0^+ : C^s \rightarrow C^{-N}$ is bounded as the following holds by \cite[Theorem 2.6]{Guillarmou-17-1}: $C^s \hookrightarrow H^{s/2} \overset{\RR_0^+}{\rightarrow} H^{-s} \hookrightarrow C^{-N}$, for $N \geq 0$ large enough. In other words:
\[
\|A \RR_0^+ u\|_{C^s} \leq C\|u\|_{C^s}.
\]
We now fix a conic neighborhood $\mc{V}$ of $\V^*$ in $T^*(SM)$. As $E_s^*$ is a source for the Hamiltonian dynamics of $(\Phi_t)_{t \in \R}$, for any $(x,\xi) \in \mc{V}$, there exists $T(x,\xi) \geq 0$ such that $\Phi_{-T(x,\xi)}(x,\xi) \in \Ell(A)$ and there is a uniform bound $\sup_{(x,\xi) \in \mc{V}} T(x,\xi) \leq T < \infty$. As a consequence, by propagation of singularities in Hölder-Zygmund spaces (Proposition \ref{prop:usual-propagation-Cs}), we deduce that for $A_1 \in \Psi^0(SM)$ with wavefront set in the conic neighborhood $\mc{V}$ of $\V^*$, there exists $B_1 \in \Psi^0(SM)$ (with wavefront set disjoint from $E_s^*$ and $E_u^*$) such that:
\[
\|A_1f\|_{C^s} \leq C\left(\|Af\|_{C^s} + \|B_1Xf\|_{C^s} + \|f\|_{C^{-N}} \right)
\]
Hence:
\[
\|A_1 \RR_0^+ u\|_{C^s} \leq C \|u\|_{C^s}.
\]
By the wavefront set properties of ${\pi_2}_*$, this implies that ${\pi_2}_* \RR_0^+ : C^s(SM) \rightarrow C^s(M,\otimes^2_S T^*M)$ is bounded.
\end{proof}

\subsection{Proof of Theorem \ref{theorem:mls}} Before completing the proof of Theorem \ref{theorem:mls}, we need a preliminary lemma:

\begin{lemma}
\label{lemma:reduction}
Let $N \geq 2$. Then, there exists $\eps > 0$ such that the following holds. For any metric $g$ such that $\|g-g_0\|_{C_*^N} < \eps$, there exists a (unique) diffeomorphism isotopic to the identity $\phi$, of regularity $C_*^{N+1}$, such that $D^*(\phi^*g) = 0$. The metric $\phi^*g$ is called the \emph{solenoidal reduction} of $g$.
\end{lemma}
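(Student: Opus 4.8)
The plan is to produce $\phi$ by an implicit-function / gauge-fixing argument, parametrizing diffeomorphisms close to the identity through the exponential map of $g_0$ (this is the Hölder--Zygmund analogue of the solenoidal reductions used in \cite{Guillarmou-Lefeuvre-18,Guillarmou-Knieper-Lefeuvre-19}). For a vector field $V$ on $M$ which is small in $C^{N+1}_*$, set $\Psi_V(x):=\exp^{g_0}_x(V(x))$; then $\Psi_V$ is a $C^{N+1}_*$ diffeomorphism isotopic to the identity (via the isotopy $t\mapsto\Psi_{tV}$), and $V\mapsto\Psi_V$ is a smooth local parametrization of the diffeomorphisms near $\mathrm{id}$. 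Consider
\[
F(g,V):=D^*(\Psi_V^*g)\in C^{N-1}_*(M,T^*M),
\]
which is well defined because $\Psi_V\in C^{N+1}_*$ and $g\in C^N_*$ give $\Psi_V^*g\in C^N_*$, while $D^*=-\Tr(\nabla\cdot)$ is a first-order operator with smooth coefficients. Since $\nabla g_0=0$ we have $F(g_0,0)=D^*g_0=0$, and we must solve $F(g,V)=0$ for $V=V(g)$ near $0$, the solution then providing $\phi:=\Psi_{V(g)}$.

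The linearization in $V$ at $(g_0,0)$ is the heart of the matter. As $g_0$ is smooth there is no loss of regularity in differentiating, and $\tfrac{d}{dt}\big|_{t=0}\Psi_{tW}^*g_0=\mc{L}_Wg_0=2D(W^\flat)$ with $W^\flat:=g_0(W,\cdot)$, whence
\[
\partial_VF(g_0,0)\cdot W=2\,D^*D\,W^\flat .
\]
The operator $D^*D$ is a nonnegative, formally self-adjoint, elliptic operator of order $2$ on $1$-forms, whose kernel is the space of Killing $1$-forms of $g_0$. Because $(M,g_0)$ is Anosov it has no nontrivial Killing field (such a field would lift to a nonzero smooth vector field on $SM$ commuting with the geodesic vector field, whereas the centralizer of a transitive Anosov flow is reduced to $\R X$), so $D^*D$ is injective; being elliptic, self-adjoint and Fredholm on the closed manifold $M$, it is a topological isomorphism, and Hölder--Zygmund elliptic regularity upgrades this to an isomorphism $D^*D:C^{N+1}_*(M,T^*M)\to C^{N-1}_*(M,T^*M)$. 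This is exactly where the gain of one derivative (from $C^N_*$ on $g$ to $C^{N+1}_*$ on $\phi$) originates, $D^*D$ being of order $2$ and $D^*$ of order $1$.

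To conclude one runs an implicit-function argument; the only genuine difficulty is the classical loss of one derivative in the composition operator $g\mapsto g\circ\Psi_V$, which is Lipschitz but not $C^1$ from $C^N_*$ to $C^N_*$, so the inverse function theorem does not apply verbatim between the natural spaces. I would bypass this by a perturbative fixed-point scheme using the smallness of $g-g_0$: splitting $\Psi_V^*g=\Psi_V^*g_0+\Psi_V^*(g-g_0)$, the first term yields a genuinely smooth map $V\mapsto D^*(\Psi_V^*g_0)$ from $C^{N+1}_*$ to $C^{N-1}_*$ with differential $2D^*D$ at $V=0$, while $V\mapsto D^*(\Psi_V^*(g-g_0))$ has $C^{N-1}_*$-norm and $V$-Lipschitz constant both $\lesssim\|g-g_0\|_{C^N_*}$ on a fixed small ball. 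Rewriting $F(g,V)=0$ as $V^\flat=(2D^*D)^{-1}\big(2D^*D\,V^\flat-F(g,V)\big)$, the right-hand side maps the ball $\{\,\|V\|_{C^{N+1}_*}\le r\,\}$ into itself and is a contraction there once $r$ is small and $\|g-g_0\|_{C^N_*}\le\eps\ll r$, so Banach's fixed-point theorem gives a unique such $V=V(g)$, hence the desired $C^{N+1}_*$ diffeomorphism $\phi:=\Psi_{V(g)}$, isotopic to the identity, with $D^*(\phi^*g)=0$; uniqueness holds among diffeomorphisms isotopic and sufficiently close to the identity, as any such one is $\Psi_V$ with $V$ in the ball and therefore solves the same fixed-point equation. (Alternatively, the loss of derivative can be absorbed by invoking the $C^k$-composition framework already used for the stretch map, cf.\ \cite{DeLaLLave-Obaya-98}.)
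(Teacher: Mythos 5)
The paper does not actually prove this lemma (it refers to \cite{Ebin-68, Guillarmou-Lefeuvre-18}), so your write-up is a genuine attempt at the underlying argument. The architecture is the right one — gauge-fixing via $\Psi_V=\exp^{g_0}(V)$, linearization $2D^*D$ on $1$-forms, invertibility from ellipticity plus the absence of Killing fields on an Anosov manifold, and a fixed point with the frozen linearization to dodge the non-differentiability of $(g,V)\mapsto \Psi_V^*g$. But the step you use to "bypass" the loss of derivative is exactly where the loss reappears, and as stated it is false: you claim that $V\mapsto D^*\bigl(\Psi_V^*(g-g_0)\bigr)$ is Lipschitz from $C^{N+1}_*$ into $C^{N-1}_*$ with constant $\lesssim\|g-g_0\|_{C^N_*}$. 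Unwinding this, you would need
\[
\bigl\|\Psi_{V_1}^*h-\Psi_{V_2}^*h\bigr\|_{C^{N}_*}\ \lesssim\ \|h\|_{C^{N}_*}\,\|V_1-V_2\|_{C^{N+1}_*},\qquad h:=g-g_0,
\]
i.e. Lipschitz dependence of the composition on the diffeomorphism \emph{without} loss of derivative. The top-order term of the left-hand side contains $\nabla^{N}h\circ\Psi_{V_1}-\nabla^{N}h\circ\Psi_{V_2}$, and for $h\in C^N_*$ the tensor $\nabla^N h$ lies only in $C^0_*$ (for integer $N$ it need not even be continuous), so this difference admits no modulus of continuity in $\|V_1-V_2\|$ at all; already $h$ with $\nabla^N h$ bounded but not uniformly continuous and $\Psi_{V_2}$ a small translation of $\Psi_{V_1}$ gives a counterexample. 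The correct estimate is Lipschitz \emph{with loss of one derivative}, $\|\Psi_{V_1}^*h-\Psi_{V_2}^*h\|_{C^{N-1}_*}\lesssim\|h\|_{C^{N}_*}\|V_1-V_2\|_{C^{N}_*}$, which is not enough to close your contraction in $C^{N+1}_*$.

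The scheme can be repaired by the standard two-norm trick: take as the fixed-point domain the ball $B_r=\{\|V\|_{C^{N+1}_*}\le r\}$ but equip it with the $C^{N}_*$ distance (it is complete for that distance by the Fatou property of Zygmund spaces). One checks that your map $\mathcal{T}(V)=(2D^*D)^{-1}\bigl(2D^*D V^\flat-F(g,V)\bigr)$ preserves $B_r$ using only the \emph{norm} bounds (where your estimates are correct, via $(D^*D)^{-1}:C^{N-1}_*\to C^{N+1}_*$), and is a contraction for the weaker distance using $(D^*D)^{-1}:C^{N-2}_*\to C^{N}_*$ together with the lossy Lipschitz estimate above; uniqueness and the $C^{N+1}_*$ regularity of the fixed point then follow as you indicate. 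Alternatively one can get existence in low regularity and bootstrap $V$ through the quasilinear elliptic equation $D^*(\Psi_V^*g)=0$, whose principal part at $V=0$ is $2D^*D$. Either way, the point you dismiss as "the only genuine difficulty" is genuinely the difficulty, and your proposed estimate does not resolve it. The remaining ingredients (the formula $\mathcal{L}_Wg_0=2DW^\flat$, triviality of $\ker D$ on Anosov manifolds, elliptic isomorphism on Hölder--Zygmund spaces, isotopy via $t\mapsto\Psi_{tV}$) are correct.
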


We refer to \cite{Ebin-68, Guillarmou-Lefeuvre-18} for a proof.

\begin{proof}[Proof of Theorem \ref{theorem:mls}]
First of all, we define $g' = \phi^*g$ as the solenoidal reduction of $g$ (with respect to $g_0$), i.e. $D^*(g'-g_0) = 0$, by Lemma \ref{lemma:reduction} applied with $N := 3+\eps$. Observe that $a_g$ and $a_{g'}$ are cohomologous since $g$ and $g'$ have same marked length spectrum. Using Lemmas \ref{lemma:bound} and \ref{lemma:bound2}, with $\nu\in (0,1)$, and the fact that coboundaries are in the kernel of $\Pi + \mathbf{1}\otimes\mathbf{1}$, we get:
\[
\|g'-g_0\|_{C^{\nu-1}}  \lesssim \|\Pi_2(g'-g_0)\|_{C^{\nu}} = \|{\pi_2}_* (\Pi + \mathbf{1}\otimes\mathbf{1})\pi_2^\ast (g'-g_0)\|_{C^\nu} \lesssim \| \pi_2^\ast(g'-g_0) \|_{C^\nu/D^\nu}. 
\]
Next, we use Proposition \ref{prop:sharp-estimate-stretch}: for $\nu>0$ small enough,
\[
\| \pi_2^\ast(g'-g_0) \|_{C^\nu/D^\nu} \lesssim \| a_g- \mathbf{1}\|_{C^\nu/D^\nu} + \|g'-g_0\|_{C^{\nu-1}}\|g'-g_0\|_{C^{3+\eps}}, 
\]
so that
\[
\|g'-g_0\|_{C^{\nu-1}}  ( 1- C \|g'-g_0\|_{C^{3+\eps}} ) \lesssim \| a_g- \mathbf{1}\|_{C^\nu/D^\nu},
\]
for some constant $C > 0$.
%
%
%
%
%, we obtain with  of Lemma \ref{lemma:norme}:
%
%
%
%
%\[
%\|g'-g_0\|_{C^{\nu-1}_*}  \lesssim \|\Pi_2(g'-g_0)\|_{C^{\nu}} \lesssim \|{\pi_2}_* (\Pi + \mathbf{1}\otimes\mathbf{1})(a_g-\mathbf{1} + r)\|_{C^\nu},
%\]
%where $r$ is the remainder in Proposition \ref{prop:sharp-estimate-stretch}. Using Lemma \ref{lemma:bound2} and the fact that any coboundary is in the kernel of $\Pi + \mathbf{1} \otimes \mathbf{1}$, we get:
%\[
%\|g'-g_0\|_{C^{\nu-1}_*} \lesssim \|a_g - 1\|_{C^\nu/D^{\nu}} + \|r\|_{C^{\nu}/D^{\nu}} \lesssim  \|a_g - 1\|_{C^\nu/D^{\nu}} + \|g'-g_0\|_{C^{\nu-1}_*} \|g'-g_0\|_{C^3_*}.
%%+ \|g'-g_0\|_{C^{\nu-1}} \Big( \| g'-g_0\|_{C^2} + \| g'-g_0\|_{C^{13/2-\nu/2}}^2 \Big).
%\]
Now, the solenoidal reduction also gives that $\|g'-g_0\|_{C^{3+\eps}} \leq C' \|g-g_0\|_{C^{3+\eps}}$ for some constant $C' > 0$. Hence, assuming that $\|g-g_0\|_{C^{3+ \eps}} < 1/(2CC')$ is small enough, we get:
\[
\|g'-g_0\|_{C^{\nu-1}} \lesssim \|a_g - \mathbf{1}\|_{C^\nu/D^{\nu}}.
\]
This provides the announced result.
\end{proof}

\appendix

\section{A subadditive lemma}

\begin{lemma}
\label{lemma:sub}
Let $g: \M \times \R \ni (x,t) \mapsto g(x,t) \in \R$ be a continuous subadditive family, i.e satisfying for $x\in M$, $t,t'\geq 0$:
\[
g(x, t+ t') \leq g(x,t)+g(\varphi_t(x), t'). 
\]
Assume that $g$ is uniformly Lipschitz along flow orbits in the sense that there exists $C >0$ such that for all $x \in \M, t,s \in \R$: $|g(x,t)-g(\varphi_s x,t)| \leq Cs$. Then:
\[
\lim_{T \rightarrow +\infty} \frac{1}{T} \sup_{x \in \M} g(x,T) = \sup_{x \in \M} \lim_{T \rightarrow +\infty} \frac{1}{T} g(x,T).
\]
\end{lemma}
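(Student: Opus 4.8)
The plan is to pass from the two-variable $g$ to the single-variable function $\Lambda(T):=\sup_{x\in\M}g(x,T)$, to reduce the left-hand side to Fekete's subadditive lemma, and then to handle the two inequalities separately; the real content is the construction of one orbit that realises the optimal rate, for which I would go through an invariant limit measure and Kingman's subadditive ergodic theorem.

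First I would observe that $\Lambda$ is finite and continuous on $[0,+\infty)$ (compactness of $\M$ together with joint continuity of $g$) and subadditive, since $g(x,T+T')\le g(x,T)+g(\varphi_T x,T')\le\Lambda(T)+\Lambda(T')$ gives $\Lambda(T+T')\le\Lambda(T)+\Lambda(T')$. Fekete's lemma then yields $L:=\lim_{T\to\infty}\tfrac1T\Lambda(T)=\inf_{T>0}\tfrac1T\Lambda(T)$, and if $L=-\infty$ both sides of the asserted identity equal $-\infty$ because $\tfrac1Tg(x,T)\le\tfrac1T\Lambda(T)$; so I may assume $L\in\R$. For the ``easy'' inequality I would fix $x$ and $T_0>0$, write $T=nT_0+r$ with $0\le r<T_0$, and iterate subadditivity to get $g(x,T)\le\sum_{k=0}^{n-1}g(\varphi_{kT_0}x,T_0)+g(\varphi_{nT_0}x,r)\le n\Lambda(T_0)+C_{T_0}$ with $C_{T_0}:=\sup_{\M\times[0,T_0]}|g|$; dividing by $T$, letting $T\to\infty$, and then infimising over $T_0$ gives $\limsup_{T\to\infty}\tfrac1Tg(x,T)\le L$ for every $x$, hence ``$\le$'' in the statement.

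For the reverse inequality I would build the extremal orbit. Pick, for each $n$, a point $z_n$ with $g(z_n,n)\ge\Lambda(n)-1$, form the empirical probability measures $\mu_n:=\tfrac1n\int_0^n(\varphi_t)_*\delta_{z_n}\,dt$, and extract a weak-$*$ convergent subsequence $\mu_{n_k}\to\mu$; a short computation shows $\mu$ is $\varphi$-invariant. The key estimate is that $\int_{\M}g(\cdot,m)\,d\mu\ge Lm$ for every $m\ge1$: from $g(\varphi_t z_n,m)\ge g(z_n,t+m)-g(z_n,t)$ one integrates over $t\in[0,n-m]$, the right-hand side telescopes to $\int_{n-m}^{n}g(z_n,s)\,ds-\int_0^{m}g(z_n,s)\,ds$, and since $g(z_n,s)\ge g(z_n,n)-\Lambda(n-s)\ge\Lambda(n)-1-C_m$ for $s\in[n-m,n]$ (with $C_m$ a bound for $|\Lambda|$ on $[0,m]$ and for $|g|$ on $\M\times[0,m]$) and the remaining terms are $O_m(1)$, one gets $\int_0^n g(\varphi_t z_n,m)\,dt\ge m\Lambda(n)-O_m(1)$; dividing by $n$, using $\Lambda(n)/n\to L$, and passing to the limit along $(n_k)$ (continuity of $g(\cdot,m)$) gives the claim. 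Kingman's subadditive ergodic theorem, applied to the cocycle $\big(g(\cdot,T)\big)_{T\ge0}$ over $(\M,\mu,(\varphi_t))$ (legitimate because $g(\cdot,1)$ is bounded), then produces a $\mu$-a.e.\ limit $\tilde g(x)=\lim_{T\to\infty}\tfrac1Tg(x,T)$ with $\int_{\M}\tilde g\,d\mu=\inf_{m\ge1}\tfrac1m\int_{\M}g(\cdot,m)\,d\mu\ge L$; since the previous step gives $\tilde g\le L$ $\mu$-a.e.\ and $\mu$ is a probability measure, this forces $\tilde g=L$ $\mu$-a.e. Picking any $x_0$ in the full-measure Kingman set therefore gives $\lim_{T\to\infty}\tfrac1Tg(x_0,T)=L$, so $\sup_x\lim_{T\to\infty}\tfrac1Tg(x,T)\ge L$, which closes the proof.

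The part I expect to be the genuine obstacle is this last step: subadditivity only ever produces \emph{upper} bounds on $g(x,T)$, whereas the right-hand side of the identity requires a \emph{lower} bound along a single fixed orbit, so one cannot merely pick a near-optimal point for each $T$ and pass to a limit — the modulus of continuity of $x\mapsto g(x,T)$ degrades as $T\to\infty$. It is precisely the passage through the invariant limit measure, and the use of Kingman's theorem to convert the averaged estimate $\int g(\cdot,m)\,d\mu\ge Lm$ into a pointwise one, that threads this needle. (I note that the uniform-Lipschitz-along-orbits hypothesis is not really needed once $g$ is assumed continuous — it guarantees that $\Lambda$ is Lipschitz and is automatically satisfied in the applications of the lemma — so I would not invoke it.)
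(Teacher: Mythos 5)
Your proof is correct, and its skeleton is the same as the paper's: reduce the left-hand side to Fekete via subadditivity of $\Lambda(T)=\sup_x g(x,T)$, push empirical measures along near-maximizing orbits to an invariant limit measure $\mu$, prove the averaged lower bound $\int_{\M} g(\cdot,m)\,d\mu \geq Lm$, and invoke Kingman to convert it into a pointwise limit at some $x_0$. The one genuine divergence is how that averaged lower bound is obtained. The paper smears the initial point of the maximizing orbit over a short time window using the uniform-Lipschitz-along-orbits hypothesis ($g(x_m,2^m)\leq g(\varphi_s(x_m),2^m)+Cs$) and then chops $[0,2^m]$ into blocks of length $2^{n_0}$; you instead use the telescoping identity $\int_0^{n-m}\bigl(g(z_n,t+m)-g(z_n,t)\bigr)\,dt=\int_{n-m}^{n}g(z_n,s)\,ds-\int_0^{m}g(z_n,s)\,ds$ together with the subadditive lower bound $g(z_n,s)\geq g(z_n,n)-\Lambda(n-s)$ near the endpoint. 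Your route buys a slightly stronger statement — as you note, the Lipschitz hypothesis is never used, so the lemma holds for any jointly continuous subadditive family on a compact space — whereas the paper's smearing step is what forces that hypothesis into the statement (it is harmless there, since the applications satisfy it). Your handling of the degenerate case $L=-\infty$ and your remark that ergodicity of $\mu$ is not needed (the inequality $\tilde g\leq L$ everywhere plus $\int\tilde g\,d\mu\geq L$ forces $\tilde g=L$ a.e.) are both sound.
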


All these limits exist due to Kingman's subadditive ergodic theorem. The inequality $\geq$ is obvious so it only remains to prove $\leq$. We shall actually prove that
\begin{equation}
\label{equation:red}
\lim_{T \rightarrow +\infty} \frac{1}{T} \sup_{x \in \M} g(x,T) =  \sup_{\mu \in \mc{P}_{\mathrm{inv,erg}}} \lim_{T \rightarrow +\infty} \frac{1}{T}\int_{\M} g(x,T) \dd\mu,
\end{equation}
where $\mc{P}_{\mathrm{inv,erg}}$ is the space of all ergodic invariant probability measures. The right-hand side of \eqref{equation:red} is clearly controlled by:
\[
\sup_{\mu \in \mc{P}_{\mathrm{inv,erg}}} \lim_{T \rightarrow +\infty} \frac{1}{T}\int_{\M} g(x,T) \dd\mu \leq  \sup_{x \in \M} \lim_{T \rightarrow +\infty} \frac{1}{T} g(x,T),
\]
which will eventually prove Lemma \ref{lemma:sub}.

\begin{proof}
It suffices to prove \eqref{equation:red}. The inequality $\geq$ is obvious so it only remains to prove $\leq$. This is an application of Kingman's subadditive ergodic theorem. We start by taking an integer $m \geq 1$. We denote by $x_m$ a point in $M$ where $\sup_{x \in \M} g(x, 2^m)$ is attained. Since $\sup_{x \in \M} g(x,T)$ is a subadditive function of $T$, 
\[
\lim_{T \rightarrow +\infty} \frac{1}{T} \sup_{x \in \M} g(x,T) \leq \frac{1}{2^m} g(x_m, 2^m),
\]
and the right-hand side converges to the left-hand side when $m\to + \infty$. We denote by $\mu_m$ the probability measure obtained by averaging functions over the $[0,2^m]$ orbit of $x_m$, namely $\mu_m(f) = 2^{-m} \int_0^{2^m} f(\varphi_s x_m) \dd s$. Certainly, we can extract a sequence of $(m_n)_{n \in \N}$ such that $\mu_{m_n}$ converges weakly to some measure $\mu$ with is invariant under the flow. When we let $m$ tend to $+\infty$ it will be along this subsequence. 

Our next step is to use the second assumption on $g$, so that for $s\in \R$,
\[
g(x_m, 2^m)  \leq g(\varphi_s(x_m), 2^m)  + C s.
\]
It follows that for $ 0 < n_0 < m$, 
\[
g(x_m, 2^m) \leq \frac{1}{2^{n_0}} \int_0^{2^{n_0}} g(\varphi_s(x_m), 2^m) ds + \frac{C 2^{n_0}}{2}. 
\]
Then, we find
\[
\frac{1}{2^m} g(x_m, 2^m) \leq \frac{1}{2^{n_0}} \int_0^{2^{n_0}} \frac{1}{2^m} g(\varphi_s(x_m), 2^m) ds + \frac{C 2^{n_0-m}}{2}. 
\]
We now take $n_0>0$, $m> n_0$ and decompose
\[
g(\varphi_s(x_m), 2^m) \leq g(\varphi_s(x_m), 2^{n_0}) + \dots + g(\varphi_{s+(2^{m-n_0}-1)2^{n_0}}(x), 2^{n_0}). 
\]
It follows that
\begin{align*}
\frac{1}{2^m} g(x_m, 2^m) &\leq \frac{1}{2^{m}} \int_0^{2^{m}} \frac{1}{2^{n_0}} g(\varphi_s(x_m), 2^{n_0}) \dd s + \frac{C 2^{n_0-m}}{2},\\
					&\leq \int_{\M} \frac{1}{2^{n_0}} g(x, 2^{n_0}) \dd \mu_m(x) + \frac{C 2^{n_0-m}}{2}
\end{align*}
Taking $m\to + \infty$, we deduce that for $n_0>0$, 
\[
\lim_{T \rightarrow +\infty} \frac{1}{T} \sup_{x \in \M} g(x,T) \leq \int_{\M}  \frac{1}{2^{n_0}} g(x, 2^{n_0}) d\mu(x).
\]
According to Kingman's subadditive ergodic theorem, the limit as $n_0 \rightarrow \infty$ converges for $\mu$-almost every $x \in \M$ and we get:
\[
\lim_{T \rightarrow +\infty} \frac{1}{T} \sup_{x \in \M} g(x,T) \leq \int_{\M} \lim_{T \rightarrow +\infty} \frac{1}{T} g(x, T) d\mu(x) \leq \sup_{\mu \in \mc{P}_{\mathrm{inv,erg}}} \lim_{T \rightarrow +\infty} \frac{1}{T}\int_{\M} g(x,T) \dd\mu
\]
\end{proof}

\section{Threshold on Sobolev spaces}

\label{appendix:b}

We briefly compare the threshold condition on Sobolev spaces and on Hölder-Zygmund spaces and explain why the latter have usually a better (namely lower) threshold. For the sake of simplicity, let us assume that $\X = X$ is the vector field acting on $C^\infty(\M)$. In this particular case, we have $\omega_+(X)=\omega_-(X)=0$. If $X$ preserves a smooth measure, then it is well-known by the work of Dyatlov-Zworski \cite[Appendix E]{Dyatlov-Zworski-19} that the $L^2$-threshold is also equal to $0$, namely Theorem \ref{theorem:source} holds \emph{verbatim} with $C^\rho_*$ being replaced by $H^\rho$ for any $\rho > 0$. As a consequence, this does not make any difference.

Nevertheless, in the non-volume-preserving case, there is a notable difference. In the definition of $\omega_{\pm}(\X)$, the appearance of $M(T,x)$ stems from the need to find pointwise bounds for the propagator $e^{t\X}:L^\infty(\M,\E) \to L^\infty(\M,\E)$. If we were working with Sobolev spaces, we would have to work with $L^2$-bounds, and $M(T,x)$ should be replaced by $M(T,x) \mathrm{Jac}_{\varphi_{-T}(x)}(\varphi_T)^{1/2}$, where the Jacobian\footnote{It is defined by the equality:
\[
\int_{\mc{M}} f(\varphi_{-t}(x)) \dd \mu(x) = \int_{\M} f(x) \mathrm{Jac}_x(\varphi_t) \dd \mu(x).
\]} is computed with respect to some smooth (arbitrary) measure $\mu$. 

Hence, in the simple case where $\X = X$, $M(T,x)=1$, in order to evaluate the threshold as in \eqref{equation:threshold}, one would be left with the quantity
\begin{equation}
\label{equation:threshold-l2}
\sup_{x \in \M}  \lim_{T\to + \infty} \frac{1}{T}\log \left(\mathrm{Jac}_{\varphi_{-T}(x)}(\varphi_T)^{1/2} \| \dd_x \varphi_{-T}|_{E^u} \|^\rho\right),
\end{equation}
and finding $\rho > 0$ large enough so that this eventually becomes negative. But we claim the following:

\begin{lemma}
\[
\inf\left\{ \rho > 0 ~\middle|~ \sup_{x \in \M}  \lim_{T\to + \infty} \frac{1}{T}\log \left(\mathrm{Jac}_{\varphi_{-T}(x)}(\varphi_T)^{1/2} \| \dd_x \varphi_{-T}|_{E^u} \|^\rho\right) < 0\right\} > 0.
\]
\end{lemma}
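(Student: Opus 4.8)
The plan is to exhibit a point (or an invariant measure) at which the quantity in \eqref{equation:threshold-l2} fails to be negative for small $\rho$, so that taking $\rho\downarrow 0$ cannot drive the supremum below $0$. First I would record that at $\rho=0$ the quantity becomes $\sup_{x\in\M}\lim_{T\to+\infty}\frac1T\log\mathrm{Jac}_{\varphi_{-T}(x)}(\varphi_T)^{1/2}$. By the standard change-of-variables identity (the footnote definition) and the chain rule for the cocycle $\mathrm{Jac}$, the ergodic averages $\frac1T\log\mathrm{Jac}_{\varphi_{-T}(x)}(\varphi_T)$ are, for $\mu$-generic $x$ and an invariant measure $\mu$, equal to $-\int \mathrm{div}_\mu X\,d\mu$ up to sign conventions, and in particular there is an invariant measure $\mu_0$ (e.g. the SRB measure, or an appropriately chosen periodic orbit measure) for which this integral is \emph{positive}, i.e. the flow is expanding volume along backward time on average. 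Concretely, since $X$ is \emph{not} volume preserving, $\mathrm{div}_\mu X$ is not identically zero and not cohomologous to $0$, so there exists a periodic orbit $\gamma$ along which $\int_\gamma \mathrm{div}_\mu X\neq 0$; choosing the orientation so that $\int_\gamma\mathrm{div}_\mu X>0$ we get, along that periodic orbit, $\lim_{T}\frac1T\log\mathrm{Jac}_{\varphi_{-T}(x)}(\varphi_T)^{1/2}=\tfrac12\big(\frac1{\ell(\gamma)}\int_\gamma\mathrm{div}_\mu X\big)=:c_0>0$ for $x\in\gamma$.

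Next I would combine this with the contribution of the unstable term. Along the same periodic orbit $x\in\gamma$, $\|\dd_x\varphi_{-T}|_{E^u}\|$ is bounded above and below by $C^{\pm1}e^{-\lambda_u^\gamma T}$ where $\lambda_u^\gamma>0$ is the (smallest) unstable Lyapunov exponent of $\gamma$; hence $\frac1T\log\|\dd_x\varphi_{-T}|_{E^u}\|^\rho\to -\rho\lambda_u^\gamma$. Therefore for $x\in\gamma$,
\[
\lim_{T\to+\infty}\frac1T\log\Big(\mathrm{Jac}_{\varphi_{-T}(x)}(\varphi_T)^{1/2}\,\|\dd_x\varphi_{-T}|_{E^u}\|^\rho\Big)=c_0-\rho\lambda_u^\gamma,
\]
which is $\geq 0$ as soon as $\rho\leq c_0/\lambda_u^\gamma$. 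Since the supremum over $x\in\M$ in \eqref{equation:threshold-l2} dominates its value at $x\in\gamma$, the set of $\rho>0$ making the supremum negative is contained in $(c_0/\lambda_u^\gamma,+\infty)$, and the infimum of that set is therefore $\geq c_0/\lambda_u^\gamma>0$. This proves the lemma.

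The only genuine content — and the step I expect to be the main obstacle to state cleanly — is producing a periodic orbit (or invariant measure) on which the Jacobian cocycle has a strictly positive average, i.e. verifying that $X$ non-volume-preserving forces $\mathrm{div}_\mu X$ to have a nonzero periodic average. This is exactly Liv\v{s}ic's theorem in the contrapositive: if every periodic average of $\mathrm{div}_\mu X$ vanished, $\mathrm{div}_\mu X$ would be a coboundary $X h$, and then $e^{-h}\mu$ would be an $X$-invariant smooth measure, contradicting the hypothesis; so one may pick $\gamma$ with $\int_\gamma\mathrm{div}_\mu X\neq 0$ and orient it to make the integral positive. A convenient alternative that avoids orientation bookkeeping is to use the SRB measure $\mu_{\mathrm{SRB}}$ for the \emph{time-reversed} flow: for it one has $\int \mathrm{div}_\mu X\, d\mu_{\mathrm{SRB}}>0$ strictly (this is the statement that the metric entropy is strictly less than the sum of positive Lyapunov exponents unless an absolutely continuous invariant measure exists — again excluded here), and then Kingman's theorem together with Lemma~\ref{lemma:sub} upgrades the $\mu_{\mathrm{SRB}}$-average to the pointwise $\sup_x$ expression. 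Either route gives the strict positivity $c_0>0$, and the rest is the elementary arithmetic displayed above.
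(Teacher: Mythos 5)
Your reduction to exhibiting a periodic orbit (or invariant measure) on which $\mathrm{div}_\mu(X)$ has strictly positive average is the right one, and the elementary arithmetic with the unstable exponent is fine. The gap sits exactly at the step you yourself flag as ``the only genuine content'', and your proposed fix does not work. The contrapositive of the classical Liv\v{s}ic theorem only produces a periodic orbit $\gamma$ with $\int_\gamma \mathrm{div}_\mu(X)\,\dd\gamma \neq 0$, and you cannot ``choose the orientation'' to make this integral positive: a periodic orbit of a flow carries a fixed time orientation, and the reversed curve is a periodic orbit of $-X$, not of $X$. A priori every nonzero periodic average could be negative, in which case your argument yields nothing. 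This is precisely why the paper does not invoke the classical Liv\v{s}ic theorem but the \emph{positive} Liv\v{s}ic theorem of \cite{Lopes-Thieullen-05}: assuming for contradiction that $\int_\gamma \mathrm{div}_\mu(X)\,\dd\gamma \leq 0$ for \emph{all} $\gamma$, one writes $\mathrm{div}_\mu(X) = Xu + h$ with $h \leq 0$ H\"older, observes that $\int_\M h\,\dd(e^{-u}\mu) = \int_\M \mathrm{div}_{e^{-u}\mu}(X)\,\dd(e^{-u}\mu) = 0$ forces $h \equiv 0$, and then upgrades $u$ to $C^\infty$ via Theorem \ref{theorem:regularity} so that $e^{-u}\mu$ is a smooth invariant measure, a contradiction. (Your sketch also silently needs this last regularity upgrade: Liv\v{s}ic only gives a H\"older $h$, so $e^{-h}\mu$ is not a smooth invariant measure without Theorem \ref{theorem:regularity}.)

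Your alternative route through the SRB measure of the time-reversed flow (Pesin's entropy formula plus strictness in Ruelle's inequality unless an absolutely continuous invariant measure exists) would indeed give $\int_\M \mathrm{div}_\mu(X)\,\dd\mu_{\mathrm{SRB}} > 0$ and hence the lemma, and only the easy inequality $\sup_x \geq$ ($\mu_{\mathrm{SRB}}$-generic value) is needed, not Lemma \ref{lemma:sub}. But as written this is a pointer to a substantial body of external results (Pesin's formula, the equality case in Ruelle's inequality, and once more the ``absolutely continuous implies smooth'' rigidity), none of which is established or cited precisely. As it stands, the main argument has a genuine gap and the fallback is a sketch rather than a proof.
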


\begin{proof}
It suffices to show that for $\rho=0$, one has
\[
\sup_{x \in \M}  \lim_{T\to + \infty} \frac{1}{T}\log \left(\mathrm{Jac}_{\varphi_{-T}(x)}(\varphi_T)^{1/2} \right) > 0.
\]
Also note that $\mathrm{Jac}_{\varphi_{-T}(x)}(\varphi_T)^{1/2} = \mathrm{Jac}_{x}(\varphi_{-T})^{-1/2}$.
We write
\[
\mathrm{Jac}_x(\varphi_{T}) = \exp\left(\int_0^T \mathrm{div}_\mu(X)(\varphi_s(x)) \dd s \right),
\]
where $\mathrm{div}_\mu(X)$ is the divergence of $X$ with respect to $\mu$. Note that by assumption $\mathrm{div}_\mu(X) \neq 0$ and since $\int_{\M} \mathrm{div}_\mu(X) \dd \mu = 0$, this implies that $\mathrm{div}_\mu(X)$ has both signs. Then, writing $\mc{G}$ for the set of periodic orbits, we have:
\[
\begin{split}
\sup_{x \in \M}  \lim_{T\to + \infty} \frac{1}{T}\log \left(\mathrm{Jac}_{\varphi_{-T}(x)}(\varphi_T)^{1/2} \right)& =  \sup_{x \in \M} \frac{1}{2} \lim_{T\to + \infty} \frac{1}{T} \int_{-T}^{0} \mathrm{div}_\mu(X)(\varphi_s(x)) \dd s \\
&  \geq \sup_{\gamma \in \mc{G}} \frac{1}{2\ell(\gamma)} \int_\gamma \mathrm{div}_\mu(X) \dd \gamma.
\end{split}
\]
We claim that this last quantity is strictly positive. Indeed, assume that contrary, namely for all $\gamma \in \mc{G}$, one has $\int_\gamma \mathrm{div}_\mu(X) \dd \gamma \leq 0$. Then, by \cite{Lopes-Thieullen-05}, one can write $\mathrm{div}_\mu(X) = Xu + h$, where $u,Xu,h \in C^\alpha(\M)$ are Hölder-continuous (for some exponent $\alpha > 0$) and $h \leq 0$. Then $\mathrm{div}_{e^{-u} \mu}(X) = h$. Since 
\[
\int_{\M} \mathrm{div}_{e^{-u }\mu}(X) \dd(e^{-u} \mu) = 0 = \int_{M} h \dd (e^{-u} \mu),
\]
we obtain $h \equiv 0$ and thus $\mathrm{div}_\mu(X) = Xu$ for some Hölder-continuous $u$. Applying Theorem \ref{theorem:regularity}, we get that $u$ is actually smooth. But this implies that $X$ preserves the smooth measure $e^{-u}\mu$.
\end{proof}

As a last remark, in the article \cite{Baladi-Tsujii-08}, Baladi and Tsujii construct spaces with $L^1$-norms along submanifolds close to the unstable foliation, in the case of a diffeomorphism $T$ of a manifold $\M$. They obtain an essential radius for the transfer operators $\mathcal{L}f = g\cdot f\circ T$ on their spaces of the form
\[
\lim_{m\to+\infty} \left( \int_{\M} g^{(m)}(x) \lambda^{p,q,m}(x)  \right)^{1/m}.
\]
where $g^{(m)}(x) = g(T^m x)\dots g(x)$, and $\lambda^{p,q,m}(x)$ is an expression involving the norm of $\dd T^m$ restricted to $E^s$ and $E^u$ (assuming $g$ is $C^\delta$ for some $\delta>0$). Translating this expression in the case of flows where $g^{(m})(x)$ has to be replaced by $e^{\int_0^t V \circ \varphi_s \dd s}$, this seems to indicate that it is possible to obtain a threshold for operators $X+V$ of the form
\[
\omega_{+,BT08}(X+V)=\inf\left\{ \rho \ \middle| \ \lim_{T \rightarrow \infty} \frac{1}{T}\log\int_{\M} e^{\int_0^T V\circ \varphi_{-t} dt} \|{d_x\varphi_{-T}}_{|E^u}\|^\rho \dd \mu < 0 \right\},
\]
where $\mu$ is a smooth measure. Our bound is similar but replacing the integral by the supremum of the integrand, so that $\omega_+(X+V) \geq \omega_{+,BT08}(X+V)$ in general. However, manipulation of the spaces appearing in \cite{Baladi-Tsujii-08} is much less easy than $C^s_\ast$ spaces.

\bibliographystyle{alpha}
\bibliography{Biblio}

\end{document}